\documentclass{article}
\usepackage[utf8]{inputenc}
\usepackage[margin= 1.75 cm]{geometry}

\usepackage{amsthm}
\usepackage{enumitem}
\usepackage{amsmath}

\usepackage{amssymb}
\usepackage{setspace}
\usepackage{mathtools}
\usepackage{verbatim}
\usepackage{csquotes}
\usepackage{graphicx}
\usepackage[hidelinks]{hyperref}
\usepackage{thm-restate}
\usepackage{cleveref}
\usepackage{graphicx}
\usepackage{mathrsfs}
\usepackage{enumitem}
\usepackage{framed}
\usepackage{subcaption}
\usepackage{crossreftools}
\usepackage{floatrow}
\usepackage[T1]{fontenc}
\floatsetup{ 
  heightadjust=object,
  valign=c
}
\usepackage{xcolor}

\setlength{\parskip}{\medskipamount}
\setlength{\parindent}{0pt}

\addtolength{\intextsep}{2pt} 
\addtolength{\abovecaptionskip}{5pt}
\addtolength{\belowcaptionskip}{-5pt}
\captionsetup{width=0.8\textwidth, labelfont=bf, parskip=5pt}

\setstretch{1.1}

\theoremstyle{plain}

\newtheorem*{thm*}{Theorem}
\newtheorem{theorem}{Theorem}[section]
\Crefname{theorem}{Theorem}{Theorems}

\newtheorem*{lem*}{Lemma}
\newtheorem{lemma}[theorem]{Lemma}
\Crefname{lemma}{Lemma}{Lemmas}

\newtheorem*{claim*}{Claim}

\crefname{claim}{Claim}{Claims}
\Crefname{claim}{Claim}{Claims}

\Crefname{prop}{Proposition}{Propositions}

\crefname{corollary}{Corollary}{Corollaries}

\newtheorem{conj}[theorem]{Conjecture}
\crefname{conj}{Conjecture}{Conjectures}

\newtheorem*{conj*}{Conjecture}

\Crefname{qn}{Question}{Questions}

\newtheorem{obs}[theorem]{Observation}
\Crefname{obs}{Observation}{Observations}

\Crefname{ex}{Example}{Examples}

\theoremstyle{definition}
\newtheorem*{definition}{Definition}

\Crefname{problem}{Problem}{Problems}

\newtheorem{defn}[theorem]{Definition}
\Crefname{defn}{Definition}{Definitions}

\newtheorem*{defn*}{Definition}

\theoremstyle{remark}

\newtheorem{problem}[lemma]{Problem}
\newtheorem*{problem*}{Problem}

\renewenvironment{proof}[1][]{\begin{trivlist}
\item[\hspace{\labelsep}{\bf\noindent Proof#1.\/}] }{\qed\end{trivlist}}

\newenvironment{cla_proof}[1][]{\begin{proof}[#1]}{\end{proof}}

\newcommand{\eps}{\varepsilon}

\expandafter\def\expandafter\normalsize\expandafter{%
    \normalsize
    \setlength\abovedisplayskip{8pt}
    \setlength\belowdisplayskip{8pt}
    \setlength\abovedisplayshortskip{4pt}
    \setlength\belowdisplayshortskip{4pt}
}

\usepackage[square,sort,comma,numbers]{natbib}
\setlength{\bibsep}{1 pt plus 20 ex}

 \setlist[itemize]{leftmargin=*}

\makeatletter
\newcommand{\optionaldesc}[2]{%
  \phantomsection
  #1\protected@edef\@currentlabel{#1}\label{#2}%
}
\makeatother

\newcounter{propcounter}

\title{Towards Graham's rearrangement conjecture via rainbow paths}
\author{Matija Buci\'c \thanks{Institute of Mathematics, University of Vienna, Vienna, Austria, and Department of Mathematics, Princeton University, Princeton, NJ, USA. Email: \href{mailto:matija.bucic@univie.ac.at} {\nolinkurl{matija.bucic@univie.ac.at}}. Research supported in part by NSF grants DMS--2349013 and DMS-1928930.} \and Bryce Frederickson\thanks{Department of Mathematics, Emory University, 
Atlanta, GA, USA. Email: {\tt bfrede4@emory.edu}. Research is supported by the NSF Graduate Research Fellowship Program under Grant No. 1937971.} \and Alp M\"uyesser \thanks{New College, University of Oxford, UK. Email: {\tt alp.muyesser@new.ox.ac.uk}. } \and  Alexey Pokrovskiy\thanks{Department of Mathematics, University College London, UK. Email: {\tt dralexeypokrovskiy@gmail.com}.} \and Liana Yepremyan \thanks{Department of Mathematics, Emory University, 
Atlanta, GA, USA. Email: {\tt lyeprem@emory.edu}. Research is supported by the National Science Foundation grant 2247013: Forbidden and Colored Subgraphs. }}

\date{}

\begin{document}
\maketitle
\begin{abstract}

We study an old question in combinatorial group theory which can be traced back to a conjecture of Graham from 1971. Given a group $\Gamma$, and some subset $S\subseteq \Gamma$, is it possible to permute $S$ as $s_1,s_2,\ldots, s_d$ so that the partial products $\prod_{1\leq i\leq t} s_i$, $t\in [d]$ are all distinct? Most of the progress towards this problem has been in the case when $\Gamma$ is a cyclic group. We show that for any group $\Gamma$ and any $S\subseteq \Gamma$, there is a permutation of $S$ where all but a vanishing proportion of the partial products are distinct, thereby establishing the first asymptotic version of Graham's conjecture under no restrictions on $\Gamma$ or $S$.
    \par To do so, we explore a natural connection between Graham's problem and the following very natural question attributed to Schrijver. Given a $d$-regular graph $G$ properly edge-coloured with $d$ colours, is it always possible to find a rainbow path with $d-1$ edges? We settle this question asymptotically by showing one can find a rainbow path of length $d-o(d)$. While this has immediate applications to Graham's question for example when $\Gamma=\mathbb{F}_2^k$, our general result above requires a more involved result we obtain for the natural directed analogue of Schrijver's question.
\end{abstract}

\section{Introduction}
In this paper, we study the following natural question in combinatorial number theory, first raised by Graham \cite{graham1971sums} in 1971, and reiterated by Erd\H{o}s and Graham in 1980~\cite{ErdosGraham}.
\begin{conj}[Graham, 1971]\label{conj:graham}For any $p$ prime and $a_1, a_2, \dots, a_d$ non-zero distinct elements of $\mathbb{Z}_p$, there exists a \emph{rearrangement} of the elements as $a_{i_1}, a_{i_2}, \dots, a_{i_d}$ such that all partial sums $\sum_{j=1}^t{a_{i_j}}$, $1\leq t \leq d$ are distinct. 
\end{conj}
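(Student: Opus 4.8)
The plan is to translate Graham's conjecture into a question about rainbow directed paths in a Cayley digraph and then apply the rainbow-path machinery developed in this paper. Fix a set $S=\{a_1,\dots,a_d\}\subseteq\mathbb{Z}_p\setminus\{0\}$ and form the Cayley digraph $\vec G=\mathrm{Cay}(\mathbb{Z}_p,S)$ on vertex set $\mathbb{Z}_p$, placing an arc $x\to x+s$ of colour $s$ for every $x\in\mathbb{Z}_p$ and every $s\in S$. Then every vertex has in-degree and out-degree exactly $d$, and the arc-colouring is proper: the $d$ arcs leaving (resp.\ entering) any vertex receive the $d$ distinct colours of $S$. The key point is that a rearrangement $a_{i_1},\dots,a_{i_d}$ with partial sums $P_t=\sum_{j\le t}a_{i_j}$ corresponds, via $v_0:=0$ and $v_t:=P_t$, to a rainbow directed walk $v_0\to v_1\to\cdots\to v_d$ in $\vec G$ using \emph{all} $d$ colours, and the condition that $P_1,\dots,P_d$ be pairwise distinct is exactly the condition that $v_1,\dots,v_d$ be pairwise distinct. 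Hence it suffices to find in $\vec G$ a rainbow directed path of length $d$ when $\sum_{s\in S}s\neq 0$, and a rainbow directed cycle of length $d$ when $\sum_{s\in S}s=0$ (such a cycle may be translated to pass through $0$, and its colours automatically sum to zero).

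First I would establish the asymptotic version. Since $\vec G$ is a $d$-in-regular, $d$-out-regular, properly arc-coloured digraph on $d$ colours, it is exactly an instance of the directed analogue of Schrijver's question, so the directed rainbow-path result of this paper supplies a rainbow directed path of length $d-o(d)$. Reading this back through the correspondence gives an ordering of all but an $o(d)$-fraction of $S$ with pairwise distinct partial sums; appending the remaining $o(d)$ elements in any order, at most $o(d)$ of the $d$ partial sums coincide. This yields the asymptotic form of Graham's conjecture over $\mathbb{Z}_p$ promised in the abstract.

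The hard part is upgrading the length-$(d-o(d))$ rainbow path to a length-$d$ one, i.e.\ forcing every colour to be used. I would attempt this by an absorption argument tailored to the Cayley structure: instead of invoking the rainbow-path theorem as a black box, reserve at the outset a small, pseudorandom family of colours together with absorbing gadgets — short rainbow directed paths and switching structures in $\vec G$, which are plentiful because $p$ is prime and the group is abelian and vertex-transitive — then build a near-maximum rainbow directed path on the remaining colours and splice in the reserved colours one at a time without repeating any visited vertex. The obstacle is that the leftover $o(d)$ colours are adversarial: they could all equal a single value or form an arithmetic progression, precisely the cases where greedy insertion or a single switch can fail, so the absorbers must be robust against the structure of the leftover set. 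An alternative, modulus-specific route would be to replace this last step by an algebraic completion count — a permanent estimate, a Combinatorial Nullstellensatz computation, or a Cauchy--Davenport-type bound — showing that a near-extremal rainbow path has a strictly positive number of extensions to one using all of $S$. I expect essentially all the difficulty, and any genuinely new idea needed beyond the asymptotic statement, to reside in this final gap-closing step.
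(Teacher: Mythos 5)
The statement you are asked to prove is \Cref{conj:graham}, which is an open conjecture: the paper does not prove it, and neither does your proposal. Your reduction to rainbow directed paths in $\mathrm{Cay}(\mathbb{Z}_p,S)$ is correct and is exactly the correspondence the paper describes in the introduction, and your observation that the full conjecture is equivalent to finding a rainbow directed path of length $d$ (or a rainbow cycle when $\sum_{s\in S}s=0$) is sound. But everything after that is a description of what one would \emph{like} to do, not an argument. The final paragraph — absorption gadgets, a Nullstellensatz count, a Cauchy--Davenport completion — names strategies without executing any of them, and you yourself flag that all the difficulty lives there. A proof cannot end where the difficulty begins.

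There is also a concrete error in your intermediate ``asymptotic'' step. You invoke ``the directed rainbow-path result of this paper'' to get a rainbow directed path of length $d-o(d)$ in $\mathrm{Cay}(\mathbb{Z}_p,S)$ for arbitrary $d$. The paper proves no such thing: \Cref{thm:regulardigraph-intro} requires $d=\Omega(n)$ (the dense regime), \Cref{thm:summary}(c) requires $d\geq p^{3/4+o(1)}$, and for general $d$ the directed question is posed as the open \Cref{problem:directed}. What the paper actually establishes without density assumptions is \Cref{thm:weakasymptoticrestated}: a rainbow \emph{walk} of length $d$ with at most $\eps d$ vertex repetitions, which yields the asymptotic rearrangement of \Cref{thm:weakasymptotic-intro} but is strictly weaker than a rainbow path of length $d-o(d)$ (equivalently, a rearrangeable subset of size $d-o(d)$, which is the still-open \Cref{prob:mainconc}). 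So even the asymptotic half of your argument rests on a result that is not available in the regime $d\ll p^{3/4}$, and the exact conjecture remains entirely out of reach of both your sketch and the paper's methods.
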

In his original paper, Graham \cite{graham1971sums} poses the rearrangement problem in conjunction with several other related problems concerning sumset structures, that is, the structure of the set of all numbers that can be formed by taking sums of elements from a given subset, which can be considered the principal domain of additive combinatorics. However, later papers of Graham \cite{chung2007universal, buhler1994juggling, graham2013juggling} suggest that the rearrangement problem was at least partially motivated by practical applications to juggling.
\par The goal of the present paper is to provide asymptotic solutions to Graham's conjecture and several other related problems. To achieve this, we primarily use the lens of extremal and probabilistic combinatorics; in particular, we exploit a connection between Graham's conjecture and the rich area of finding \textit{rainbow subgraphs}. An
edge colouring of an undirected graph is \emph{proper} if no two edges sharing a vertex have the same colour. In
the directed setting, no pair of edges with a common start-vertex and no pair of edges with a common end
point may be monochromatic. In both cases, a subgraph of a coloured graph or digraph is \emph{rainbow} if all its edges
have distinct colours.  Given a subset $S$ of a group $\Gamma$, we define the \emph{(coloured) Cayley graph} on $\Gamma$ with \emph{generator set} $S$ to be the edge-coloured directed graph $\mathrm{Cay}(\Gamma,S)$ on vertex set $\Gamma$ with an edge from $a$ to $ag$ of colour $g$ for every $a \in \Gamma$ and $g \in S$\footnote{Here and throughout the paper, we use additive notation if the underlying group is assumed to be Abelian, and multiplicative notation otherwise.}. If $S = \{a_{i_1}, \ldots, a_{i_d}\}$ is a rearrangement of $S \subseteq \mathbb Z_p \setminus \{0\}$ with distinct partial sums, we see that $(a_{i_1}, a_{i_1}+a_{i_2}, \ldots, a_{i_1}+ \cdots +a_{i_d})$ is a rainbow directed path in $\mathrm{Cay}(\mathbb Z_p, S)$ with $d-1$ edges. Conversely, any rainbow directed path in $\mathrm{Cay}(\mathbb Z_p, S)$ with $d-1$ edges\footnote{Henceforth, in the context of directed graphs, a path always refers to a directed path, and in both directed and undirected settings, the \emph{length} of a path refers to the number of edges it contains.} is of the form $(x + a_{i_1}, x + a_{i_1} + a_{i_2}, \ldots, x + a_{i_1} + \cdots + a_{i_d})$ for some $x \in \mathbb Z_p$ and rearrangement $S = \{a_{i_1}, \ldots, a_{i_d}\}$ with distinct partial sums. Before saying more about our methods, we give a survey of what is known about Graham's conjecture as well as various generalisations thereof. 

Most of the progress towards Graham's conjecture has been in the cases when the \emph{generator set} $S=\{a_1, a_2, \dots, a_d\}$ is very small or very large. Indeed, summarising the work of many, an approach based on the Combinatorial Nullstellensatz~\cite{alon1999combinatorial} verifies the conjecture when $|S|\leq 12$ and direct, constructive arguments can be used when $|S|\geq p-3$, see~\cite{costa2020some, hicks2019distinct} and references therein. It also follows from the results of~\cite{muyesser2022random} (see Section 6.2) that Graham's conjecture is true for sets $S$ with $|S|=(1-o(1))p$. Recently, Kravitz~\cite{Kravitz} and independently Sawin~\cite{Sawin2015} showed that Graham's conjecture holds when $|S|\leq \log p/\log \log p$. In a subsequent paper, Bedert and Kravitz~\cite{BederdKravitz} significantly improved this bound by showing that the conjecture holds when $|S|\leq e^{(\log{p})^{1/4}}$, which has been an important milestone as this bound overcomes a natural barrier for the rectification techniques used in~\cite{Kravitz, Sawin2015}. 

Given an arbitrary group $\Gamma$ and some subset $S = \{a_1, \ldots, a_d\}\subseteq \Gamma$, we say $S$ is \emph{rearrangeable} if it is possible  to order the elements of $S$ as $a_{i_1}, a_{i_2}, \dots, a_{i_d}$ such that all partial products $\prod_{j=1}^t{a_{i_j}}$, $1\leq t \leq d$ are distinct. The observation mentioned earlier, which connects rearrangeable sets in $\mathbb Z_p$ to rainbow paths in Cayley graphs, extends to this more general context. That is, $S \subseteq \Gamma$ is rearrangeable if and only if $\mathrm{Cay}(\Gamma,S)$ contains a rainbow path of length $|S|-1$. Graham's conjecture states that all subsets of $\mathbb{Z}_p\setminus \{0\}$ are rearrangeable, but in fact, the study of rearrangeable sets has a rich history that predates Graham's conjecture. In 1961, Gordon~\cite{gordon1961sequences} introduced the notion of \emph{sequenceable groups} as those groups $\Gamma$ for which the set $S=\Gamma$ is rearrangeable.
 In the same paper, he characterised all sequenceable Abelian groups. A conjecture by Keedwell~\cite{keedwell1981sequenceable} from 1981 says that the only non-Abelian non-sequenceable groups are the dihedral groups of order $6$, $8$ and the quaternion group. This has recently been confirmed for all sufficiently large groups by Müyesser and Pokrovskiy~\cite{muyesser2022random}. In the same spirit, Ringel~\cite{ringeloldproblem}, motivated by some constructions arising in his celebrated proof of the Heawood map colouring conjecture \cite{ringel2012map}, raised the question of which groups can be ordered as $a_1,a_2,\ldots, a_{n}$ where $a_1$ is the identity, the partial products $a_1$, $a_1a_2$, $\ldots$ , $a_1a_2\cdots a_{n-1}$ are all distinct, and $a_1a_2\cdots a_{n}=e$; that is, for which groups $\Gamma$ does $\mathrm{Cay}(\Gamma, \Gamma)$ contain a rainbow cycle of length $|\Gamma| - 1$? This latter problem has similarly been resolved for large groups in~\cite{muyesser2022random}. The motivation for these kinds of problems comes from combinatorial design theory~\cite{evans2018orthogonal}. For example, a \emph{Latin square} is an $n \times n$ array filled with $n$ symbols such that each symbol appears exactly once in each row and once in each column. A Latin square is called \emph{complete} if every pair of distinct symbols appears exactly once in each order in adjacent horizontal cells and exactly once in adjacent vertical cells. It is not hard to see that any sequenceable group admits a Cayley (multiplication) table which is a complete Latin square. This connection ties sequenceability to decompositions of directed graphs into Hamiltonian paths~\cite{ollis2002sequenceable}, Heffter arrays~\cite{pasotti2022survey} and even to experimental designs~\cite{BATE2008336}.

Showing rearrangeability of arbitrary subsets of any group turns out to be significantly more difficult, and here there has only been limited progress, even in the simplest case of $\Gamma=\mathbb{Z}_p$, as in the setting of Graham's conjecture. We remark that a generalised version of Graham's conjecture for all cyclic groups was posed by Archdeacon, Dinitz, Mattern, and Stinson~\cite{archdeacon2015partial}. A slightly stronger conjecture is due to Alspach~\cite{BodeHarborth,costa2022sequences} who conjectured that if in Conjecture~\ref{conj:graham} we have an additional assumption that $\sum_{i=1}^d{a_i}\neq 0$, then all the partial sums are not only distinct but also non-zero. This is equivalent to saying that
for any subset $S \subseteq \mathbb Z_n \setminus \{0\}$, $\mathrm{Cay}(\mathbb Z_n,S)$ contains either a rainbow path of length $|S|$ or a rainbow cycle of length $|S|$. Alspach was motivated by applications to cycle decompositions of complete graphs~\cite{ALSPACH200177}, complete graphs plus or minus a $1$-factor~\cite{vsajna2002cycle,vsajna2003decomposition} and complete symmetric digraphs~\cite{ALSPACH2003165}. Various versions of Alspach's conjecture have been reiterated and stated, for example, for general Abelian groups by Costa, Morini, Pasotti and Pellegrini~\cite{costa2018problem}, and by Costa, Della Fiore, Ollis and Rovner-Frydman~\cite{costa2022sequences}. Below we state a version of Graham's conjecture for general Abelian groups, which is due to Alspach and Liversidge \cite{alspach2020strongly}. 

\begin{conj}\label{conj:Alspach}For any finite Abelian group $\Gamma$, any subset $S\subseteq \Gamma\setminus\{0\}$ is rearrangeable.
\end{conj}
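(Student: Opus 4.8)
The plan is to prove the conjecture by combining the asymptotic rainbow-path machinery of this paper with an absorption argument. Recall that it suffices to find a rainbow directed path of length $d-1$ in $\mathrm{Cay}(\Gamma, S)$, where $d := |S|$ and $n := |\Gamma|$; we treat three regimes according to the size of $d$ relative to $n$. When $d$ is small the problem reduces to known results about cyclic groups; when $d$ is close to $n$ the Cayley graph is dense and near-spanning rainbow-path technology applies; and the bulk of the conjecture lies in the wide intermediate range, which is where our asymptotic result enters.

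\textbf{The two extreme regimes.} When $d$ is below the threshold up to which the cyclic case is already known --- currently $e^{(\log n)^{1/4}}$ by Bedert--Kravitz~\cite{BederdKravitz}, and $d \le 12$ unconditionally via the Combinatorial Nullstellensatz --- one should adapt the corresponding arguments: the Nullstellensatz computations of~\cite{costa2020some, hicks2019distinct} and the rectification arguments of Kravitz~\cite{Kravitz}, Sawin~\cite{Sawin2015} and Bedert--Kravitz~\cite{BederdKravitz}. Passing from $\mathbb Z_p$ to a general finite abelian group $\Gamma \cong \mathbb Z_{n_1} \times \cdots \times \mathbb Z_{n_k}$ requires an additional rectification step --- embedding the bounded additive structure one must control into a large cyclic group, or into $\mathbb Z$ --- and is the one point needing care. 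At the other extreme, when $d = (1-o(1))n$ the Cayley graph is dense and the distributive absorption framework of M\"uyesser--Pokrovskiy~\cite{muyesser2022random} produces near-spanning rainbow paths, with the very top $d = n - O(1)$ handled directly as in~\cite{costa2020some, hicks2019distinct}.

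\textbf{The bulk, and the main obstacle.} The substance of the conjecture is the remaining range $\omega(1) \le d \le (1-\eps)n$. Here the idea is to reserve, before building anything, a small absorbing gadget --- a set $A \subseteq S$ with $|A| = o(d)$ together with a distinguished docking vertex --- chosen so that for \emph{every} leftover set $R \subseteq S$ of size $o(d)$ there is a rainbow path through the docking vertex using exactly the colours $A \cup R$. One then feeds $\mathrm{Cay}(\Gamma, S \setminus A)$ to our asymptotic directed rainbow-path result, arranging that the path it produces avoids the gadget and omits only a set $R$ of $o(d)$ generators, and finally splices the gadget in to obtain a rainbow path of length $d-1$. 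The hard part --- where I expect the main difficulty to lie --- is designing the absorber: $\mathrm{Cay}(\Gamma, S)$ is rigidly determined by $S$, with no ambient randomness to exploit, and a worst-case leftover set $R$, say a near-coset of a large subgroup, has partial sums so clustered that no fixed gadget can absorb it. One is therefore likely forced to run a structure-versus-pseudorandomness dichotomy \emph{on $S$ itself}: if a large subgroup $H \le \Gamma$ captures a constant fraction of $S$, quotient by $H$, recurse on $\Gamma/H$ and on the individual cosets, and concatenate --- the bookkeeping that preserves distinctness of partial sums across cosets being itself delicate --- whereas if $S$ is spread out then $\mathrm{Cay}(\Gamma, S)$ expands well enough both to steer the leftover set $R$ into an absorbable shape and to close the final $o(d)$ gap by a P\'osa-type rotation--extension. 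Making this dichotomy work uniformly, and especially at the boundary where $S$ is mildly structured but not coset-like, is the crux.
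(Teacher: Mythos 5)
This statement is a conjecture that the paper explicitly leaves open; the paper proves only approximate versions of it (an ordering with all but $o(|S|)$ partial products distinct, and a rearrangeable subset of size $(1-o(1))|S|$ under extra hypotheses). Your proposal is accordingly a research programme rather than a proof, and each of its three regimes contains an unresolved step that is essentially the open problem itself. In the small-$d$ regime, the results you cite (Combinatorial Nullstellensatz up to $|S|\leq 12$, and the rectification arguments of Kravitz, Sawin and Bedert--Kravitz) are specific to $\mathbb Z_p$; rectification fundamentally uses the absence of small subgroups, and for general abelian $\Gamma$ the only known bound is $|S|\leq 9$ via an entirely different poset method. Your remark that the passage to general $\Gamma$ ``requires an additional rectification step'' and ``needs care'' is precisely where the argument is missing, not a detail to be checked.

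In the bulk regime the proposal has two gaps, both fatal as written. First, the asymptotic input you want to feed the absorber --- a rainbow directed path in $\mathrm{Cay}(\Gamma, S\setminus A)$ missing only $o(d)$ colours, equivalently a rearrangeable subset of size $(1-o(1))d$ --- is only established in this paper when $d=\Omega(|\Gamma|)$, when $S$ consists of involutions, or when $\Gamma=\mathbb Z_p$ and $d\geq p^{3/4+o(1)}$; for general abelian $\Gamma$ with $\omega(1)\leq d\leq o(n)$ it is itself an open problem (Problem~\ref{prob:mainconc}). The general result available (Theorem~\ref{thm:weakasymptotic-intro}) gives only a rainbow \emph{walk} with $o(d)$ vertex repetitions, and this does not convert into a rearrangeable subset: deleting an element from the ordering shifts all subsequent partial sums, so a walk with few repeated vertices does not yield a long rainbow path. (The implication runs the other way, as the paper notes.) Second, the absorber itself is never constructed; you correctly identify that a worst-case leftover set $R$ clustered near a coset defeats any fixed gadget and that one would need a structure-versus-pseudorandomness dichotomy on $S$, but naming the obstacle is not the same as overcoming it. As it stands, the proposal reduces the conjecture to several statements that are at least as hard as the results the paper actually proves, plus an absorption step for which no construction is offered.
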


Conjecture~\ref{conj:Alspach} is known to hold when $|S|\leq 9$~\cite{alspach2020strongly}; interestingly, the proof uses a method based on posets, which is distinct from the other methods mentioned above for the $\Gamma=\mathbb Z_p$ case. Our first result gives an approximate answer to these conjectures, and moreover, 
it applies to non-Abelian groups as well.

\begin{theorem}\label{thm:weakasymptotic-intro}
    For any finite group $\Gamma$ and any subset $S\subseteq \Gamma $ there exists an ordering of elements of $S$ in which at least $(1-o(1))|S|$ many partial products are distinct. \footnote{We note that here and in Theorem~\ref{thm:summary}(a) the asymptotic is w.r.t.\ $|S|$ only, and not w.r.t.\ the size of the ambient group $\Gamma$.}
\end{theorem}

 Theorem~\ref{thm:weakasymptotic-intro} guarantees that all subsets are approximately rearrangeable in a very general setup. However, there is yet another natural notion of an approximate rearrangeability introduced by Archdeacon, Dinitz, Mattern, and Stinson (see Problem 1 in~\cite{archdeacon2015partial}), where given $S$, we search for a subset $S'\subseteq S$ as large as possible such that $S'$ is rearrangeable. A greedy argument easily produces such an $S'$ of size $|S|/2$ (see the work of Hicks, Ollis, and Schmitt \cite{hicks2019distinct} for slightly better bounds). However, a major milestone here would be to find a rearrangeable subset $S'$ of size $(1-o(1))|S|$. Observe that the existence of such $S'$ implies that $S$ can be rearranged approximately in the sense of Theorem~\ref{thm:weakasymptotic-intro}, simply by arbitrarily permuting $S\setminus S'$. Moreover, it is feasible that such a strong approximate result could pave the way for exact results. Indeed, the aforementioned result of Müyesser and Pokrovskiy~\cite{muyesser2022random} showing that large non-Abelian groups are sequenceable relies on a strong approximation which holds in the regime when $\Gamma$ and $S$ have comparable size. More broadly, Keevash's \cite{keevash2014existence} celebrated proof of the existence of designs, the recent proof of Ringel's conjecture by Montgomery, Pokrovskiy, and Sudakov~\cite{ringel} (see also the independent proof of Keevash and Staden~\cite{keevash-staden-ringel}), and the resolution of the Ryser-Brualdi-Stein conjecture by Montgomery~\cite{Montgomery2024} 
 all rely on the approximate versions of the corresponding results ~\cite{rodl1985packing,approximateringel, keevash2022new} and use a similar framework, commonly referred to as \textit{the absorption method}, to obtain exact results from approximate ones.
 
\par We are able to achieve this strong approximation in three distinct settings, as summarised below.

\begin{theorem}\label{thm:summary}
    For any group $\Gamma$ and any subset $S\subseteq \Gamma$ of size $d := |S|$, there exists a rearrangeable subset $S' \subseteq S$ of size $(1-o(1))d$ if any one of the following holds. 
    \begin{enumerate}
        \item [(a)] \label{thm:strongasymptoticZ2-intro} $S$ contains only \emph{involutions}, that is, elements of order two. 
        \item [(b)] \label{thm:strongasymptotic-intro} $d=\Omega(|\Gamma|)$. 
        \item [(c)] \label{thm:Graham} $d \geq p^{3/4+o(1)}$ and $\Gamma=\mathbb{Z}_p$ for some prime $p$.
    \end{enumerate}
    
\end{theorem}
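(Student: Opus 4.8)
The plan is to phrase everything through the rainbow-path reformulation recalled above: a subset $T\subseteq\Gamma$ of size $\ell$ is rearrangeable exactly when $\mathrm{Cay}(\Gamma,T)$ contains a rainbow directed path of length $\ell-1$, and a rainbow directed path of length $\ell$ in $\mathrm{Cay}(\Gamma,S)$ uses $\ell$ distinct colours whose colour set is a rearrangeable subset $S'\subseteq S$. Hence in each of the three cases it suffices to exhibit a rainbow directed path of length $(1-o(1))d$ in the digraph $\mathrm{Cay}(\Gamma,S)$, which is always $d$-regular in and out and properly $d$-edge-coloured.

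For part (a) the hypothesis makes this digraph essentially undirected. If $g^2=e$ for every $g\in S$, then each arc $a\to ag$ of colour $g$ is accompanied by the arc $ag\to a$ of the same colour $g$, so $\mathrm{Cay}(\Gamma,S)$ is the symmetric orientation of a simple graph $G$ on vertex set $\Gamma$ (no loops as $e\notin S$, no parallel edges as distinct generators produce distinct neighbours of a vertex) that is $d$-regular and properly edge-coloured with the $d$ colours of $S$. A rainbow path in $G$ lifts to a rainbow directed path of the same length in $\mathrm{Cay}(\Gamma,S)$, so part (a) is immediate from our asymptotic version of Schrijver's question, which produces in $G$ a rainbow path of length $d-o(d)$.

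For parts (b) and (c) — the dense regimes — I would first produce an approximate solution and then repair it by absorption. Theorem~\ref{thm:weakasymptotic-intro}, proved via the directed analogue of Schrijver's question, supplies an ordering of $S$ in which all but $o(d)$ partial products are distinct, i.e.\ a rainbow directed walk $W$ in $\mathrm{Cay}(\Gamma,S)$ using all $d$ colours that revisits only $o(d)$ vertices. Deleting the $o(d)$ arcs incident to repeated vertices breaks $W$ into $o(d)$ vertex-disjoint rainbow paths of total length $d-o(d)$; one then reconnects a large sub-collection of them into a single rainbow path using short rainbow linking paths through a small set of vertices and colours reserved beforehand. The reservoir has to be chosen so that, for almost every ordered pair of reserved endpoints, there are many short rainbow directed paths between them inside the reservoir avoiding any prescribed $o(d)$-set of already-used vertices and colours. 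When $d=\Omega(|\Gamma|)$ the digraph is dense enough that such linking paths abound by a direct greedy argument, much as in the absorption carried out by M\"uyesser and Pokrovskiy~\cite{muyesser2022random} for sequenceability. When $\Gamma=\mathbb Z_p$, counting short rainbow directed paths between prescribed endpoints reduces to estimating, for most $c\in\mathbb Z_p$ and small $t$, the number of ordered $t$-tuples of distinct elements of $S$ summing to $c$; the bound one needs becomes available precisely once $d\ge p^{3/4+o(1)}$, which is the source of the exponent in (c).

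The technical heart, and the expected main obstacle, is the directed setting: a general Cayley digraph is typically far from pseudorandom (for instance when $S$ is an interval of $\mathbb Z_p$), so neither the construction of $W$ through the directed Schrijver analogue nor the absorption step can rely on pseudorandomness. In regimes (b) and (c) the absorption is forced through by the counting estimates just described, and for (c) establishing the required additive-combinatorial count — which genuinely fails below $d=p^{3/4+o(1)}$ — is the crucial and most delicate ingredient.
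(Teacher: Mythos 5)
Your argument for part (a) is correct and is exactly the paper's: for a set of involutions, $\mathrm{Cay}(\Gamma,S)$ is the symmetric digraph of a $d$-regular properly $d$-edge-coloured simple graph, and \Cref{thm:schrijver-asymptotic-intro} supplies the rainbow path whose colour set is the desired $S'$.

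For parts (b) and (c), however, your route has a genuine gap precisely at the step you describe as the technical heart. First, deriving (b) from \Cref{thm:weakasymptotic-intro} inverts the paper's logic (the walk theorem is itself proved \emph{from} the dense-case path theorem), but more importantly the reconnection step does not go through as sketched. Breaking the walk $W$ at its $o(d)$ repeated vertices yields a path forest with potentially $\omega(1)$ components sitting in the \emph{whole} digraph, and you then need, for each of the $o(d)$ prescribed ordered pairs of endpoints, a short rainbow linking path inside a reserved set avoiding everything used so far. Your claim that in the regime $d=\Omega(|\Gamma|)$ ``such linking paths abound by a direct greedy argument'' is false: a dense Cayley digraph need not be robustly connected. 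For instance, if $\Gamma = H\times \mathbb{Z}_2$ and $S$ consists of $H\times\{e\}$ together with a handful of other elements, then $d>n/2$ but between the two ``halves'' of the digraph only a bounded number of colours are available, so no reservoir can link endpoints across that cut. The paper's proof must therefore first pass to a robustly expanding induced subgraph with almost the same degrees (\Cref{lem:find robust expander}), set aside the random reservoir $V_0,C_0$ \emph{inside that subgraph} (\Cref{lem:connecting lemma}), and only then build the path forest in what remains -- crucially via \Cref{lem:rainbow path forest}, which keeps the number of components \emph{bounded} ($O(\eps^{-2})$), not merely $o(d)$. None of this structure is recoverable from the walk of \Cref{thm:weakasymptotic-intro}, whose paths need not lie in the expander subgraph at all.

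For part (c) specifically, the additive-combinatorial count you invoke (ordered $t$-tuples of distinct elements of $S$ summing to $c$) is asserted, not proved, and it is not where the exponent $3/4$ actually comes from. In the paper, Pollard's inequality shows that $\mathrm{Cay}(\mathbb Z_p,S)$ is a robust $(\nu,\tau)$-out-expander with $\nu = d^2/(8p^2)$, and the threshold $d \geq p^{3/4+o(1)}$ arises from the requirement $\nu \gtrsim p^{-1/2}\log p$ needed for the probabilistic reservoir argument in \Cref{lem:connecting lemma} to succeed; the rest of the proof is then identical to the dense case via \Cref{lem:long path in robust expander}. Establishing your proposed counting estimate at that threshold would be a new (and nontrivial) task, so as written the proposal does not constitute a proof of (b) or (c).
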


Note that Theorem~\ref{thm:summary}(a) implies that the same conclusion holds for any subset of the group $\mathbb{F}_2^k$. To prove Theorem~\ref{thm:weakasymptotic-intro} and Theorem~\ref{thm:summary}, we use various techniques from the toolkit of probabilistic combinatorics, such as robust expansion, and in the case of Theorem~\ref{thm:summary}(c), some tools from additive combinatorics, such as Pollard's inequality, which is a strengthening of the classical Cauchy-Davenport inequality. Theorem~\ref{thm:summary}(a) and Theorem~\ref{thm:summary}(b) are of special interest, as they are both consequences of much more general results concerning the existence of rainbow paths in properly edge-coloured graphs and digraphs. Questions of this nature are both natural and have been the subject of considerable research, going all the way back to Euler's work in 18th century on the existence of transversals in Latin squares. In the last decade alone, there have been several breakthroughs in this area. Most notably, the famous Ryser-Brualdi-Stein conjecture~\cite{ryser1967neuere, brualdi1991combinatorial, stein1975transversals} from 1967 states that in any Latin square of order $n$ there exists a \emph{transversal} of order $n-1$, that is $n-1$ cells which share no column, row or a symbol. This has equivalent formulations in terms of rainbow structures in graphs and digraphs. In the undirected setting, a Latin square of order $n$ corresponds to a proper edge colouring of the complete bipartite graph $K_{n,n}$ with $n$ colours, and a transversal corresponds to a rainbow matching. Last year, following a recent improvement on the best lower bound on the size of a rainbow matching of the form $n-o(n)$ by Keevash, Pokrovskiy, Sudakov and Yepremyan~\cite{keevash2022new}, Montgomery~\cite{Montgomery2024} resolved the conjecture in his tour de force work. In the directed setting, a Latin square of order $n$ corresponds to a proper edge colouring with $n$ colours of the complete symmetric digraph on $n$ vertices with loops at each vertex, and a transversal corresponds to a rainbow subgraph whose components are directed paths and cycles (see \cite{benzing2020long} for a more detailed explanation). 
There are also several related problems which inquire about the existence of rainbow subgraphs with a more specific structure; for example, subgraphs which are large path forests or long cycles (see \cite{gyarfas2014rainbow, benzing2020long}) or cycle factors of a particular cycle type (see \cite{friedlander1981partitions, AlpSolo}). We refer the reader to comprehensive surveys by Pokrovskiy~\cite{alexey-survey}, by Montgomery~\cite{Montgomery_2024}, and by Sudakov~\cite{sudakov2024restricted} for a broader overview of the area.  For the rest of this paper, we will only focus on the existence of rainbow paths in properly edge-coloured graphs and digraphs.
\par In this direction, Hahn~\cite{hahn} conjectured in 1980 that in any proper edge colouring of $K_n$ there exists a rainbow path of length $n-1$. This was refuted by Maamoun and Meyniel~\cite{maamoun1984problem}, by considering $\mathrm{Cay}(\mathbb{F}_2^k, \mathbb{F}_2^k\setminus\{0\})$ which has no rainbow Hamilton path.
Andersen's conjecture~\cite{andersen1989hamilton} from 1989 proposes a weakening of Hahn's conjecture and states that there exists a rainbow path of length $n-2$ in any properly coloured $K_n$. After a lot of partial progress, Andersen's conjecture has been proven asymptotically by Alon, Pokrovskiy, and Sudakov \cite{alon2017random} who showed the existence of rainbow paths of length $n-o(n)$ (see the work of Balogh and Molla \cite{BALOGH2019140} for the current best lower bound).
\par Schrijver~\cite{schrijver} asked for a far reaching generalisation of Andersen's conjecture by postulating the existence of a rainbow path of length $d-1$ in any properly $d$-edge-coloured $d$-regular graph $G$, and he verified this conjecture whenever $d\leq 10$. The best general bound on Schrijver's problem guarantees a path of length roughly $2d/3$, due to Chen and Li (unpublished, see \cite{babu2015heterochromatic}), and independently, Johnston, Palmer, and Sarkar~\cite{johnston2016rainbow}. Babu, Chandran, and Rajendraprasad~\cite{babu2015heterochromatic} showed that  if the graph is $C_4$-free then a rainbow path of length $d-o(d)$ exists, and moreover if the girth is of order $\Omega(\log{d})$, then a rainbow path of length $d-2$ exists. In work independent from ours, Conlon and Haenni~\cite{ConlonHaenni} have asymptotically resolved Schrijver's problem for random $d$-regular graphs. We resolve Schrijver's problem asymptotically in full generality, by showing the following. 

\begin{theorem}\label{thm:schrijver-asymptotic-intro} Any properly edge-coloured $d$-regular graph contains a rainbow path of length $(1-o(1))d$.
\end{theorem}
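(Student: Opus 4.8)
The naive greedy construction of a rainbow path — at each step, from the current endpoint avoid the at most $\ell$ used colours and the at most $\ell$ used vertices — already yields a path of length roughly $d/2$, and the rotation-extension refinements of Chen--Li and Johnston--Palmer--Sarkar push this to about $2d/3$; to reach $(1-o(1))d$ the plan is to run an absorption argument in which the absorbed objects are \emph{colours} rather than vertices. We may assume the colouring uses exactly $d$ colours, since extra colours only weaken the rainbow constraint; then each colour class is a perfect matching of $G$ and the goal becomes a rainbow path using all but $o(d)$ of the $d$ colours.

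\emph{Step 1: reduction to a robustly expanding, colour-balanced core.} A rainbow path lives inside a single connected component, and a $d$-regular graph need not expand, so we first isolate a well-behaved substructure. The aim is to show that one can pass to a subgraph $H$ of some component which — after discarding a $o(1)$ fraction of its vertices and of its colours — is a robust expander with minimum degree $\Omega(|V(H)|)$ in which $\Omega(|V(H)|)$ colours are ``rich'', meaning their matchings each meet $H$ in $\Omega(|V(H)|)$ edges; and that if no such $H$ exists, then a component already contains an essentially complete properly coloured near-clique, to which the asymptotic resolution of Andersen's conjecture by Alon--Pokrovskiy--Sudakov applies directly. After rescaling, it then suffices to find a rainbow path of length $(1-o(1))n$ in a properly coloured robust expander on $n$ vertices with minimum degree $\Omega(n)$ and $\Omega(n)$ rich colours.

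\emph{Step 2: absorbers, then a long path, then absorption.} Reserve a random set $R$ of $o(n)$ rich colours and a random set $W$ of $o(n)$ vertices. For all but $o(n)$ of the remaining rich colours $c$, robust expansion together with richness produces many vertex-disjoint constant-size rainbow ``switchers'': short rainbow walks whose non-pendant edges use only reserved colours and which contain an edge of colour $c$ in a position where, whenever a long rainbow path threads the switcher while currently omitting colour $c$, one can reroute locally so as to include an edge of colour $c$. Linking one switcher per such colour through short rainbow connectors (again via expansion) yields a single rainbow absorbing path $P_{\mathrm{abs}}$ on $o(n)$ vertices and colours. From an endpoint of $P_{\mathrm{abs}}$ we grow a rainbow path by a random-greedy process — at each step extend by an edge of a fresh colour to a fresh vertex — where expansion, together with the fact that $P_{\mathrm{abs}}$, $R$ and $W$ together have blocked only $o(n)$ colours and vertices, guarantees via a martingale concentration argument that the process runs until at most $o(n)$ rich colours remain unused. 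Finally the switchers in $P_{\mathrm{abs}}$ are invoked to absorb these $o(n)$ leftover colours one at a time, producing a rainbow path using $(1-o(1))n=(1-o(1))d$ colours.

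\emph{Main obstacle.} The genuinely hard part is Step 1 together with the existence of the switcher gadget in Step 2: a general properly $d$-coloured $d$-regular graph offers almost no local control — it may contain many $4$-cycles, fail to expand, and distribute its colours very unevenly — so the real work is (i) either peeling off structure to unearth a robustly expanding, minimum-degree-$\Omega(n)$, colour-balanced core, or else landing in the Andersen regime, and (ii) showing that inside such a core the absorbing gadget for a colour $c$ exists for all but $o(n)$ choices of $c$ and that vertex-disjoint copies can be threaded into a single rainbow path. Once a robustly expanding colour-balanced core is in hand, the random-greedy extension and the closing round of absorption are comparatively routine concentration estimates.
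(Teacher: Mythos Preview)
Your Step~1 dichotomy is false, and this is a genuine gap rather than a detail to be filled in. Consider a random $d$-regular graph on $n\gg d$ vertices, properly $d$-edge-coloured: with high probability it is $(\eps,d)$-locally sparse, so every set of at most $d$ vertices spans $o(d^2)$ edges. Such a graph has no induced subgraph $H$ with $\delta(H)=\Omega(|V(H)|)$ and $|V(H)|=\Theta(d)$, and its single component (of size $n\gg d$) is certainly not a near-clique. So neither branch of your dichotomy applies, and the Alon--Pokrovskiy--Sudakov result for Andersen's conjecture does not help. This sparse-everywhere regime is exactly where the paper's main new idea lives.

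The paper's actual dichotomy is quite different. It introduces an auxiliary object called a \emph{mop}: a rainbow path $P$ (the handle) together with a large set $U$ of vertices each reachable from the end of $P$ by a short rainbow extension. The weak expansion notion is that every such mop is \emph{leaky}, meaning $U$ sends many $C(G)\setminus C(P)$-edges outside $U$. If every mop is leaky (as happens in the locally sparse case), an iterative argument (\Cref{lem:mop argument}) grows the handle to length $(1-\eps)d$ directly --- no absorption, no robust expander. If some mop fails to be leaky, \emph{then} its end-set $U$ has $|U|=O(d)$ and few outgoing edges, so $G[U;C(G)\setminus C(P)]$ is genuinely dense; only now can one pass to a robust expander inside $U$, find a rainbow path forest with few components (\Cref{lem:sparsepathforest} or \Cref{lem:rainbow path forest}), and stitch the components together through a small random reservoir (\Cref{lem:connecting lemma}). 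There is no colour-absorption step at all: the long path in the unused colours is concatenated with the handle $P$ via one of the mop's strands, and that already uses $(1-\eps)d$ colours. Your switcher/absorber machinery, even if it could be made to work in a dense robust expander, would still leave the locally sparse case untreated.
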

Observe that the $d=n-1$ case of Theorem~\ref{thm:schrijver-asymptotic-intro} corresponds to Andersen's conjecture. Thus, our result recovers the aforementioned asymptotic result of Alon, Pokrovskiy, and Sudakov \cite{alon2017random} for complete graphs and extends it into a much more difficult setting where the host graph can be extremely sparse. Furthermore,  Theorem~\ref{thm:schrijver-asymptotic-intro} also directly implies Theorem~\ref{thm:summary}(a), thereby giving a strong approximation for Conjecture~\ref{conj:Alspach} in the case of $\Gamma =\mathbb{F}_2^k$. 

Given the connection between rearrangements and rainbow directed paths described above, it is natural to wonder if the algebraic structure of a coloured Cayley graph is relevant for Graham's conjecture, or more generally, for Conjecture~\ref{conj:Alspach}. Specifically, we ask the following, which may be considered to be a directed generalisation of Schrijver's problem to \emph{directed $d$-regular digraphs}, that is, digraphs in which every vertex has in-degree and out-degree exactly $d$. 

\begin{problem}\label{problem:directed} Let $G$ be a $d$-regular digraph properly edge-coloured with $d$ colours. Does $G$ contain a directed rainbow path with $d-1$ edges?
\end{problem} 

This is known to be true asymptotically for complete symmetric digraphs by results of Benzing, Pokrovskiy and Sudakov~\cite{benzing2020long}, mirroring the asymptotic results on Andersen's conjecture~\cite{alon2017random}. An affirmative answer to
Problem~\ref{problem:directed} would be quite consequential, as this would resolve Graham's conjecture, Conjecture~\ref{conj:Alspach}, and the natural generalisation of Conjecture~\ref{conj:Alspach} to non-Abelian groups as well, and therefore even an approximate answer is of great interest. We are able to give such an approximate answer to Problem~\ref{problem:directed} in the dense regime. 

\begin{theorem}\label{thm:regulardigraph-intro}Let $G$ be a $d$-regular, properly edge-coloured digraph on $n$ vertices, with $d=\Omega(n)$. Then, $G$ contains a rainbow path with $(1-o(1))d$ edges.
\end{theorem}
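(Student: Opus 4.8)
The plan is to reduce to a strongly connected host digraph that is a robust out-expander of linear minimum semidegree, and then to build a long rainbow directed path by a random greedy path-cover followed by a connecting phase; in the directed setting this connecting phase, powered by robust expansion and a reserved palette of colours, has to do the work that P\'osa rotations do in the undirected case. First note that a $d$-regular digraph is always a vertex-disjoint union of strongly connected $d$-regular digraphs: a source strong component $C$ of the condensation receives no edge from outside, so all $d|C|$ edges into $C$ lie inside $C$, forcing the number of edges inside $C$ to be $d|C|$, and hence the $d|C|$ edges leaving $C$ to in fact stay inside $C$; so $C$ is a full component, and we iterate. Thus every component has at least $d+1$ vertices, so (as $d=\Omega(n)$) there are $O(1)$ of them and it suffices to work inside one; relabel so that $G$ is strongly connected, $d$-regular on $n$ vertices, $d=\Omega(n)$. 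Fix a small $\eps>0$; the goal is a rainbow path with at least $(1-\eps)d$ edges. Call $S\subseteq V(G)$ \emph{robust} if at most an $\eps$-fraction of the out-edges from $S$ leave $S$ (equivalently, by $d$-regularity, for in-edges); comparing the $\ge(1-\eps)d|S|$ internal out-edges with the bound $|S|^2$ gives $|S|\ge(1-\eps)d$ for every robust $S$. Let $W$ be a robust set of minimum size (the whole vertex set is robust), and delete from $W$ the few vertices with fewer than $(1-\sqrt\eps)d$ in- or out-neighbours inside $W$. Then $G[W]$ has minimum semidegree $\ge(1-O(\sqrt\eps))d=\Omega(n)$, and minimality of $W$, together with standard facts about robust expansion (the absence of a sparse internal cut, or near-completeness when $|W|$ is close to $d$), forces $G[W]$ to be a robust out-expander; at the near-complete end of this spectrum one may instead invoke the rainbow directed path result of Benzing, Pokrovskiy and Sudakov~\cite{benzing2020long}. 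Assume henceforth that $G=G[W]$ is a robust out-expander with $\delta^\pm(G)\ge\beta n$ for some constant $\beta>0$.

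\textbf{Reservoir and a random greedy path cover.} Place each colour into a set $\mathcal R$ independently with probability $\eps/4$, and each vertex into a set $V_0$ independently with probability $\eps/4$. By Chernoff bounds we may fix $\mathcal R$ and $V_0$ so that every vertex has $\Theta(\eps d)$ incident $\mathcal R$-edges; $G[V_0]$ is a robust out-expander of minimum semidegree $\Omega(n)$; and every vertex has $\Theta(\eps^2 d)$ in- and out-edges that are $\mathcal R$-coloured with the far endpoint in $V_0$. Now run a random greedy process maintaining a family of vertex-disjoint rainbow directed paths: repeatedly pick a uniformly random admissible edge — colour outside $\mathcal R$, not yet used, both endpoints outside $V_0$ — that extends some path at an end or merges two paths while keeping all paths rainbow. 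A standard semi-random (nibble) analysis shows that at termination the path forest covers all but an $o(1)$-fraction of $V\setminus V_0$ and misses only an $o(1)$-fraction of the non-reserved colours; a short mopping-up pass, repeatedly merging two short paths along a fresh edge (many such edges exist while there are many paths), brings the number of paths down to $k=o(d)$. Call them $P_1,\dots,P_k$; together they use at least $(1-\eps)d$ colours.

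\textbf{Connecting and conclusion.} It remains to splice $P_1,\dots,P_k$ into a single rainbow directed path. Since $G[V_0]$ is a robust out-expander of linear minimum semidegree and, at this point, $\ge(1-o(1))\eps^2 d$ unused $\mathcal R$-colours remain incident to every vertex, a connecting lemma for robust out-expanders produces, for each $i$, a rainbow directed path of bounded length from the terminus of $P_i$ to the origin of $P_{i+1}$ running through $V_0$ and using only fresh $\mathcal R$-colours, with the $k=o(d)$ connectors pairwise internally disjoint; as each connector spends only $O(1)$ vertices and colours, neither $V_0$ nor the reserved palette is exhausted. Concatenating $P_1,\dots,P_k$ through the connectors yields one rainbow directed path using $\ge(1-\eps)d$ colours, hence with $\ge(1-\eps)d$ edges. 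Letting $\eps\to0$ gives a rainbow directed path with $(1-o(1))d$ edges, as required.

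\textbf{Main obstacle.} The essential difficulty, absent in the undirected problem, is that a directed sub-path cannot be reversed, so the rotation–extension method underlying the dense undirected case (Andersen's conjecture, \cite{alon2017random}) does not transfer; robust expansion must take its place, both in the structural reduction — showing that a dense regular digraph, after passing to a minimal robust set, is a genuine robust out-expander rather than some intermediate non-expanding object — and in the connecting phase. The technically delicate points are to run the random greedy construction and the colour reservation together so that enough $\mathcal R$-colours survive at \emph{every} vertex to perform all $\Theta(k)$ connections, to keep the number $k$ of leftover paths at $o(d)$ after the greedy phase, and to make the "minimal robust set is a robust out-expander" step rigorous with the $\eps$-bookkeeping done carefully.
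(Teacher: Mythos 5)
Your overall architecture matches the paper's: pass to an induced subdigraph that is a robust out-expander with essentially undiminished degrees, reserve random sets of vertices and colours, build a large rainbow path forest with few components in the rest, and splice the components through the reservoir via a connecting lemma. However, two of the three key steps are asserted rather than proved, and in both cases the assertion hides the actual difficulty. First, minimality of the robust set $W$ does not force $G[W]$ to be a robust out-expander. If $U\subseteq W$ with $\tau|W|\le|U|\le|W|/2$ induces a sparse cut of $G[W]$, the edges from $U$ that leave $U$ \emph{in $G$} include up to $\partial^+(W)\le\eps d|W|\le(\eps/\tau)d|U|$ edges escaping $W$ entirely, so $U$ is only $(\eps/\tau)$-robust and does not contradict the minimality of $W$ among $\eps$-robust sets. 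This is precisely why the paper's \Cref{lem:find robust expander} runs an iteration with an increasing hierarchy of sparseness parameters $\rho_0\ll\rho_1\ll\cdots\ll\rho_t$ and must separately argue that the iteration terminates before the sets become too small; this is a structural feature of the argument, not $\eps$-bookkeeping.

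Second, and more seriously, the claim that a random greedy process plus a ``mopping-up pass'' yields a rainbow path forest with $(1-o(1))d$ edges and $o(d)$ components is exactly the content of the paper's \Cref{lem:rainbow path forest}, and it does not follow from a standard nibble. The merging step is where the difficulty sits: to merge two paths you need an edge of an unused colour from the terminus of one path to the origin of another, and nothing prevents all fresh-coloured out-edges from the path termini from landing on internal vertices of the forest. The paper's proof overcomes this with a delicate augmenting argument (the sets $V_i$, $V_i'$, $C_i$ and the edge-swapping claim), which plays the role that rotation--extension plays in the undirected dense case; ``many such edges exist while there are many paths'' is false without such an argument. The connecting phase you invoke is genuinely the paper's \Cref{lem:connecting lemma} and is fine to cite as a black box, but as written the proposal leaves the two substantive lemmas unproved.
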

Note that the above implies Theorem~\ref{thm:summary}(b) directly. A common theme in the proofs of Theorem~\ref{thm:weakasymptotic-intro} and Theorem~\ref{thm:schrijver-asymptotic-intro} is a reduction of the general case to the dense regime, hence Theorem~\ref{thm:regulardigraph-intro} plays a key role in our arguments. Of course, some graphs do not contain any dense spots and this translates to a strong expansion property that allows us to build a long rainbow path by an elementary argument, see \Cref{lem:warmup} for illustration. The challenge, then, lies in graphs that are neither dense, nor strongly expanding. This points to a recurring difficulty in our arguments, namely, we have to work with mild forms of expansion whilst building long rainbow paths. To see an example of how we achieve this, we refer the reader to Lemma~\ref{lem:mop argument} which introduces a novel procedure that outputs a rainbow path of asymptotically the optimal length in graphs that satisfy a rather delicate notion of expansion.

\textbf{Organisation.} 
We introduce some notation and state a few standard results and observations in Section~\ref{sec:notation}. 
Section~\ref{sec:pathforests} proves some versions of our main results where paths are replaced with path forests with few components. Section~\ref{sec:expansion} introduces some notions of expansion and builds a toolkit for working with expanders. Section~\ref{sec:dense} proves Theorem~\ref{thm:regulardigraph-intro} and \Cref{thm:summary}(c) using the tools derived in Sections~\ref{sec:pathforests} and~\ref{sec:expansion}. Section~\ref{sec:mop} contains a proof of Theorem~\ref{thm:schrijver-asymptotic-intro}. Section~\ref{sec:rearrangement} contains a proof of Theorem~\ref{thm:weakasymptotic-intro}. In Section~\ref{sec:concluding} we make some concluding remarks and suggest some directions for future research.

\section{Notation and preliminaries}\label{sec:notation}
\textbf{Notation.} 
The digraphs we consider are loopless, and for each pair $(u,v)$ of distinct vertices, we allow at most one edge from $u$ to $v$, which we denote by $(u,v)$. We do, however, allow both edges $(u,v)$ and $(v,u)$ to appear in the same digraph. If $G$ is a (possibly edge-coloured) digraph, then for $U, V \subseteq V(G)$, we write $e_G(U,V)$ to denote the number of edges $(u,v)$ with $u \in U$ and $v \in V$. As special cases, for a vertex $v \in V(G)$, we denote the \emph{out-degree} of $v$ by $\deg^+_G(v) := e_G(\{v\}, V(G))$ and the \emph{in-degree} of $v$ by $\deg^-_G(v) := e_G(V(G), \{v\})$.  We also write $\deg_G^+(v,U) := e_G(\{v\},U)$, $\deg_G^-(v,U) := e_G(U,\{v\})$, $\partial^+_G(U) := e_G(U, V(G) \setminus U)$, and $\partial^-_G(U) := e_G(V(G) \setminus U, U)$. We omit the subscript $G$ when the digraph $G$ is clear from context.

In any of these instances, when we wish to count only those edges whose colours come from a particular colour set $C$, we insert the symbols `$;C$' before closing the parentheses (for instance, we write $\deg^+(v,U;C)$ for the number of edges from $v$ to $U$ with colours from $C$). We denote the minimum out-degree and in-degree of $G$ by $\delta^+(G)$ and $\delta^-(G)$, respectively, and we write $\delta^{\pm}(G) := \min \{\delta^+(G), \delta^-(G)\}$ for the \emph{minimum semi-degree} of $G$. If $H$ is a subgraph of $G$, we use $C(H)$ to denote the set of colours that appear in $E(H)$. Similarly, for an edge $(u,v) \in E(G)$, we use $C(u,v)$ to denote the colour of $(u,v)$. 
Also, for a vertex subset $V \subseteq V(G)$ and a colour subset $C \subseteq C(G)$, we use $G[V;C]$ to denote the subgraph induced by $V$ and $C$.

A digraph is called \emph{symmetric} if every edge is contained in a $2$-cycle. To an undirected simple graph $G$, we associate a symmetric digraph $\hat G$ by replacing each undirected edge $uv \in E(G)$ with the two directed edges $(u,v)$ and $(v,u)$. If $G$ is edge-coloured, then both $(u,v)$ and $(v,u)$ are given the colour of $uv$. The resulting edge colouring of $\hat G$ is proper if and only if the edge colouring of $G$ is proper. Further, the rainbow paths (and walks) in $\hat G$ are in one-to-one correspondence with the rainbow paths (and walks) in $G$. Thus, for our purposes, we can treat $G$ and $\hat G$ as the same object when it is convenient; that is, we consider undirected graphs as special cases of directed graphs. We use the same notation for undirected graphs as for directed graphs described above, except that we omit the superscript `$+$' or `$-$' (for example, for a vertex $v \in V(G)$, we write $\deg_G(v) := \deg^+_{\hat G}(v) = \deg^-_{\hat G}(v)$ for the \emph{degree} of $v$).

We use ``$\alpha \ll \beta$ implies $P(\alpha, \beta)$'' as shorthand to denote that there exists an increasing function $f$ such that for any $\beta$, $P(\alpha, \beta)$ holds for $\alpha \leq f(\beta)$.

\begin{lemma}[Chernoff's inequality]\label{chernoff}
    Let $X$ be a sum of independent Bernoulli random variables with $\mathbb E(X) = \mu$. Then for every $t > 0$,
    \begin{itemize}
        \item $\mathbb P(X \leq \mu - t) \leq \exp(-t^2/(2\mu))$;
        \item $\mathbb P(X \geq \mu + t) \leq \exp(-t^2/(2\mu + t))$.
    \end{itemize}
\end{lemma}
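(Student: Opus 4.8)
The final statement above is the standard Chernoff bound, so the plan is to recall the classical exponential moment (Bernstein-style) argument rather than to invent anything new. First I would set $X = \sum_{i=1}^n X_i$ with $X_i$ independent Bernoulli, $\Pr(X_i = 1) = p_i$, so that $\mu = \sum_i p_i$. The key tool is the moment generating function: for any $\lambda \in \R$,
\[
\E\!\left(e^{\lambda X}\right) = \prod_{i=1}^n \E\!\left(e^{\lambda X_i}\right) = \prod_{i=1}^n \bigl(1 + p_i(e^{\lambda} - 1)\bigr) \leq \prod_{i=1}^n \exp\!\bigl(p_i(e^{\lambda}-1)\bigr) = \exp\!\bigl(\mu(e^{\lambda}-1)\bigr),
\]
using $1 + x \leq e^x$. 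This single inequality drives both tail bounds.

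For the upper tail, fix $t > 0$ and take $\lambda > 0$. By Markov's inequality applied to $e^{\lambda X}$,
\[
\Pr(X \geq \mu + t) \leq e^{-\lambda(\mu+t)} \E\!\left(e^{\lambda X}\right) \leq \exp\!\bigl(\mu(e^{\lambda} - 1) - \lambda(\mu+t)\bigr).
\]
Optimising over $\lambda > 0$ gives $\lambda = \log(1 + t/\mu)$, yielding the bound $\exp\!\bigl(-\mu\,h(t/\mu)\bigr)$ where $h(x) = (1+x)\log(1+x) - x$. The remaining (purely analytic) step is the elementary inequality $h(x) \geq \tfrac{x^2}{2+x}$ for $x \geq 0$ — provable by comparing derivatives at $0$ — which after substituting $x = t/\mu$ gives exactly $\Pr(X \geq \mu+t) \leq \exp\!\bigl(-t^2/(2\mu + t)\bigr)$. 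For the lower tail, take $\lambda < 0$; by Markov applied to $e^{\lambda X}$ (noting $\lambda(\mu - t) < \lambda X$ is equivalent to $X < \mu - t$ after exponentiating, with the inequality direction handled by $\lambda<0$),
\[
\Pr(X \leq \mu - t) \leq e^{-\lambda(\mu - t)} \E\!\left(e^{\lambda X}\right) \leq \exp\!\bigl(\mu(e^{\lambda}-1) - \lambda(\mu - t)\bigr),
\]
and choosing $\lambda = \log(1 - t/\mu)$ (valid since we may assume $t < \mu$, the bound being trivial otherwise) produces $\exp\!\bigl(-\mu\,g(t/\mu)\bigr)$ with $g(x) = (1-x)\log(1-x) + x \geq x^2/2$; this last inequality again follows by a one-variable derivative check, giving $\Pr(X \leq \mu - t) \leq \exp(-t^2/(2\mu))$.

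I do not anticipate a genuine obstacle here — every step is standard. The only mildly delicate points are the two scalar inequalities $h(x) \geq x^2/(2+x)$ and $g(x) \geq x^2/2$, which are routine calculus exercises, and being careful with the sign of $\lambda$ when passing through Markov's inequality for the lower tail. If one prefers to avoid even these, the statement can be cited verbatim from any standard reference (e.g.\ Janson--Łuczak--Ruciński or Alon--Spencer), since the two displayed inequalities are exactly the textbook form of the Chernoff bound. Given that the paper uses this only as a black-box concentration tool, a short self-contained proof along the lines above, or a citation, suffices.
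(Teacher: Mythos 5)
Your proof is correct: it is the standard exponential-moment (Bernstein/Chernoff) argument, the optimisations over $\lambda$ are carried out correctly, and the two scalar inequalities $(1+x)\log(1+x)-x \geq x^2/(2+x)$ and $(1-x)\log(1-x)+x \geq x^2/2$ do hold and yield exactly the two stated bounds. The paper itself gives no proof of this lemma — it is stated as a standard black-box concentration tool — so your self-contained derivation (or a citation to a standard reference, as you note) is entirely adequate.
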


We conclude the section with two simple observations, which extend classical arguments from the non-rainbow setting for finding long paths.

\begin{obs}
    \label{lem:GreedyPathMinDegree}
    A properly edge-coloured digraph with minimum out-degree $d$ contains a rainbow path of length at least $d/2$.
\end{obs}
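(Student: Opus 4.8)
The plan is to run the classical ``take a longest path and examine its last vertex'' argument, adapted to the rainbow setting. Let $P = v_0 v_1 \cdots v_\ell$ be a rainbow directed path in $G$ with the maximum possible number of edges $\ell$; such a path exists since $G$ is finite (and, assuming $d \geq 1$, has at least one edge, so $\ell \geq 1$). I claim $\ell \geq d/2$. The strategy is to charge each of the out-edges leaving the terminal vertex $v_\ell$ either to a vertex of $P$ or to a colour appearing on $P$.

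First I would split the out-neighbours of $v_\ell$ into those lying on $V(P)$ and those lying outside $V(P)$. Since $G$ is loopless, $v_\ell$ has no out-edge to itself, so it has at most $|V(P)| - 1 = \ell$ out-neighbours inside $V(P)$, namely among $\{v_0, \dots, v_{\ell-1}\}$.

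Next, for an out-neighbour $w \notin V(P)$, I would argue that the colour $C(v_\ell, w)$ must already appear on $P$. Indeed, otherwise, since the colouring is proper (so in particular $C(v_\ell,w) \neq C(v_{\ell-1},v_\ell)$), appending $w$ to $P$ produces the rainbow path $v_0 \cdots v_\ell w$ with $\ell+1$ edges, contradicting the maximality of $\ell$. Because the colouring is proper, the out-edges at $v_\ell$ carry pairwise distinct colours, so the number of out-neighbours of $v_\ell$ lying outside $V(P)$ is at most $|C(P)| = \ell$.

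Combining the two bounds gives $d \leq \deg^+_G(v_\ell) \leq \ell + \ell = 2\ell$, i.e.\ $\ell \geq d/2$, as desired. There is no serious obstacle here; the only point that needs a little care is to invoke properness twice — once to ensure that extending $P$ by an out-edge of a colour not on $P$ is legitimate, and once to bound the number of out-neighbours outside $V(P)$ by the number $|C(P)| = \ell$ of colours on $P$ rather than by a trivial count.
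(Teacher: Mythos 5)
Your proof is correct and is essentially the paper's argument: both take a longest rainbow path, use maximality plus properness to force the out-edges at the terminal vertex either to land on $V(P)$ or to reuse a colour of $P$, and conclude $d \leq 2\ell$. The only cosmetic difference is that you partition the out-edges by where they land while the paper partitions them by whether their colour lies in $C(P)$; the counting is identical.
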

\begin{proof}
    Let $P$ be a rainbow path of maximum length $\ell$, and let $v$ be the terminal vertex of $P$. Let $C'$ be the set of colours not in $C(P)$. By maximality of $P$, every $C'$-coloured out-edge from $v$ must go to $V(P) \setminus \{v\}$, so $d-\ell \leq \deg(v;C') \leq \ell$. Thus $\ell \geq d/2$.
\end{proof}

\begin{obs}
    \label{lem:GreedyOnePath}
  A properly edge-coloured undirected graph with average degree $d$ contains a rainbow path of length at least $d/4$.
\end{obs}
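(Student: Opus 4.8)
The plan is to reduce the average-degree statement to the minimum-degree statement already proved in \Cref{lem:GreedyPathMinDegree}. The standard non-rainbow trick is to pass to a subgraph of large minimum degree: if $G$ has average degree $d$, then $G$ has a non-empty subgraph $H$ with minimum degree at least $d/2$. (One proves this by repeatedly deleting any vertex of degree less than $d/2$; since each deletion removes fewer than $d/2$ edges and $G$ started with $|V(G)|\cdot d/2$ edges, the process cannot exhaust all vertices.) Crucially, deleting vertices does not affect properness of the edge-colouring, so $H$ inherits a proper edge-colouring.

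Now apply \Cref{lem:GreedyPathMinDegree} to $H$. Since we are in the undirected setting, we regard $H$ as the symmetric digraph $\hat H$, which has minimum out-degree equal to $\delta(H) \geq d/2$. By \Cref{lem:GreedyPathMinDegree}, $\hat H$ — and hence $H$, and hence $G$ — contains a rainbow (directed) path of length at least $(d/2)/2 = d/4$. Since rainbow directed paths in $\hat H$ correspond to rainbow paths in the undirected graph $H \subseteq G$, we are done.

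There is no real obstacle here; the only point that needs a word of care is that the minimum-degree subgraph $H$ must be non-empty, which is exactly what the edge-count argument above guarantees (one needs $d > 0$, which we may assume, as the statement is vacuous otherwise). The factor $d/4$ is simply the product of the $1/2$ loss in passing to the minimum-degree subgraph and the $1/2$ loss in \Cref{lem:GreedyPathMinDegree}.
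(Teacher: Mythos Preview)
Your proof is correct and follows essentially the same approach as the paper: iteratively delete vertices of degree below $d/2$ to obtain a non-empty subgraph of minimum degree at least $d/2$, then apply \Cref{lem:GreedyPathMinDegree} to the associated symmetric digraph. The paper's version is slightly terser but the argument is identical.
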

     \begin{proof} Iteratively delete all vertices with degree less than $d/2$ as long as possible. The number of edges deleted is less than $dn/2$, so the remaining graph is non-empty and by construction has minimum degree at least $d/2$. Now applying \Cref{lem:GreedyPathMinDegree} to the associated symmetric digraph gives a rainbow path of length at least $d/4$.
     \end{proof}

\section{Rainbow path forests with few components}\label{sec:pathforests}
The goal of this section is to prove some versions of our main results where paths are replaced with path forests with few components. A \emph{path forest} in a (directed) graph $G$ is a subgraph of $G$ whose components are paths. In this section, we prove two auxiliary lemmas on the existence of large rainbow path forests with few components in properly edge-coloured graphs and digraphs. The first one only applies to undirected graphs and requires the maximum degree to be sublinear in the number of vertices, but provides a rainbow forest with number of edges asymptotic in \emph{average} degree. The second lemma applies in both the directed and undirected case and it makes essentially no requirement on the degree, but it only provides a rainbow path forest with number of edges asymptotic in \emph{minimum} degree. This distinction will be important while passing to subgraphs where we might lose control of the minimum degree, but keep control of the average degree.

The following is the first of our rainbow path forest lemmas, which is proved by an iterative application of \Cref{lem:GreedyOnePath} in the undirected case.

    \begin{lemma}\label{lem:sparsepathforest}
        For any $\eps >0$ there exists $d_0$ such that for all $d\geq d_0$ the following holds.  If $G$ is a properly edge-coloured graph on $n$ vertices with average degree $d$ and maximum degree $\Delta \leq \eps n/4$, then $G$ contains a rainbow path forest with at most $\left\lceil \log_{8/7}{\eps^{-1}}\right\rceil$ many components and at least $(1-\eps)d$ edges.
     \end{lemma}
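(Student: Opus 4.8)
The plan is to apply \Cref{lem:GreedyOnePath} iteratively, peeling off one rainbow path at a time. We maintain a set of ``used'' colours and a set of ``used'' vertices, and at each step we restrict attention to the subgraph induced by the remaining vertices and colours, find a long rainbow path there, and add it to our forest. The key point is that each rainbow path we extract forbids only a bounded number of colours and vertices relative to the average degree, so the average degree of the leftover subgraph shrinks only by a constant factor at each step; after $O(\log \eps^{-1})$ steps the total number of edges collected reaches $(1-\eps)d$.

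More precisely, suppose after $i$ steps we have built a rainbow path forest $F_i$ with components $P_1, \ldots, P_i$, using a colour set $C_i := C(F_i)$ and vertex set $V_i := V(F_i)$, with $|E(F_i)| = |C_i|$ since $F_i$ is rainbow. Let $G_i := G[V(G) \setminus V_i; C(G) \setminus C_i]$. First I would lower bound the number of edges of $G_i$: deleting the at most $|V_i|$ vertices removes at most $\Delta |V_i| \le (\eps n/4)|V_i|$ edges, and deleting the at most $|C_i|$ colours removes at most $|C_i| \cdot n/2$ edges (each colour class is a matching in a properly edge-coloured graph, so has at most $n/2$ edges). Since $|V_i| \le 2|E(F_i)| = 2|C_i|$ and, as long as we have not yet finished, $|C_i| \le (1-\eps)d$, this shows $e(G_i) \ge e(G) - \eps n |C_i|/2 - n|C_i|/2 \ge dn/2 - n |C_i| \ge dn/2 - (1-\eps)dn \ge \eps dn /2$. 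Hence $G_i$ has at least $2 e(G_i)/n \ge \eps d$ average degree; more carefully, I would track that the average degree of $G_i$ is at least $d - 2|C_i|/(\text{something})$ — it is cleanest to argue directly that $G_i$ has $\ge d - (\text{const}) \cdot |E(F_i)| \cdot (1/n) \cdot (\text{stuff})$; the upshot is that the average degree drops by a factor of at most $7/8$ per extracted path, so that after $\lceil \log_{8/7} \eps^{-1} \rceil$ steps, either we have already collected $(1-\eps)d$ edges, or the leftover still has average degree $\ge \eps d$ and one more application of \Cref{lem:GreedyOnePath} gives a further path of length $\ge \eps d / 4$ — but this would contradict the count, so in fact the process terminates with enough edges. (I would set up the bookkeeping so that ``number of edges collected so far'' is what grows, targeting $(1-\eps)d$, and the average-degree-of-the-remainder bound is $\ge d - (8/7)\cdot(\text{edges collected})\cdot$const, decreasing geometrically.)

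The cleanest way to organize this: note that if $F_i$ has $m_i := |E(F_i)|$ edges, then $|C_i| = m_i$ and $|V_i| \le m_i + i$, so (using $i \le \lceil\log_{8/7}\eps^{-1}\rceil \ll d \le $ average degree and $m_i \le n$) deleting these from $G$ removes at most $\Delta(m_i + i) + (n/2) m_i \le \eps n (m_i+i)/4 + n m_i / 2 \le n m_i (1 + o(1))$ edges, so $e(G_i) \ge (d/2 - (1+o(1)) m_i) n$, i.e.\ $G_i$ has average degree at least $d - (2+o(1)) m_i$. As long as $m_i < (1-\eps)d$, this average degree exceeds $\eps d /2 \cdot (1 - o(1)) > 0$, wait — I need it to stay a definite fraction of $d$: if $m_i \le (7/8)(d/2)= 7d/16$ then average degree $\ge d/8$; this is too crude. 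Instead I would phrase the invariant as: after step $i$, $m_i \ge d(1 - (7/8)^i)/ (\text{const})$ — actually the natural invariant, matching the $\log_{8/7}$ in the statement, is that the average degree $d_i$ of $G_i$ satisfies $d_i \le (7/8) d_{i-1}$ (since \Cref{lem:GreedyOnePath} extracts a path with $\ge d_{i-1}/4$ edges, killing colours that account for a $\ge (d_{i-1}/4)/(d_{i-1}/2 \cdot n /n)$... ) — the honest statement is that extracting a path of $\ell \ge d_{i-1}/4$ edges reduces the edge count by at most $(\eps n/4 + n/2)\ell \le n\ell$, hence reduces average degree by at most $2\ell \le$ hmm that's $\ge d_{i-1}/2$, too much. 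The resolution is that \Cref{lem:GreedyOnePath} deletes vertices, not colours, in its proof, but here we must delete colours globally — so I'd instead only delete the colours \emph{and} the path's vertices, losing at most $\Delta \ell + (n/2)\ell$ edges but the path has $\ell \le n$ vertices and only $\ell$ colours; to make the factor come out to $7/8$ I would not insist on reaching the bound in one pass but rather allow the path length to be measured against \emph{current} average degree and iterate $\lceil \log_{8/7}\eps^{-1}\rceil$ times, each time the residual average degree being at least $(1 - 1/4 - o(1))\cdot$(previous)$ \ge (7/8)\cdot$(previous) for $d$ large — here the $1/4$ comes from the path length $\ell \ge d_{i-1}/4$ removing $\le 2\ell/... $. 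I will make this precise by choosing the maximum-degree hypothesis $\Delta \le \eps n /4$ to ensure the vertex-deletion loss is negligible against $dn$ and verifying the colour-deletion loss of $(n/2)\ell$ against $(d_{i-1} n/2)$ total edges gives residual $\ge (1 - \ell/(d_{i-1} n / n)) = $ — the clean bound is $e(G_i) \ge e(G_{i-1}) - \Delta\ell - (n/2)\ell \ge (d_{i-1}n/2)(1 - \ell/(d_{i-1}\cdot n/n)\cdot \ldots)$; collecting $d_{i-1}/4 \le \ell$ and $\Delta \le \eps n/4 \le d_{i-1} n/(4\cdot(1-\eps)d)\cdot\eps \le \ldots$, I get $d_i \ge d_{i-1} - o(d) - 2\cdot(\text{over-count})$.

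The main obstacle, as the flailing above indicates, is the accounting: making the per-step geometric decay factor come out to exactly $7/8$ (matching the $\lceil \log_{8/7}\eps^{-1}\rceil$ in the statement) requires being careful that the only global resource we spend is the colour set, that a colour class is a matching (size $\le n/2$), and that vertex-deletion is cheap thanks to $\Delta \le \eps n/4$. Once the invariant ``residual average degree $\ge (7/8)^i \cdot d$ unless we've already collected $(1-\eps)d$ edges'' is established, the conclusion is immediate: after $i = \lceil\log_{8/7}\eps^{-1}\rceil$ steps either we are done, or the residual average degree is still $\ge \eps d > 0$ so \Cref{lem:GreedyOnePath} gives one more path and in total we have accumulated more than $(1-\eps)d$ edges anyway — and the forest we built has at most that many components. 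I would also record that $d \ge d_0$ with $d_0 = d_0(\eps)$ large is used precisely to swallow the $o(d)$ vertex-deletion and additive rounding errors, and that $\Delta \le \eps n/4$ guarantees $G$ itself (and each residual graph, having $\ge \eps dn/2 > 0$ edges) is non-empty so each application of \Cref{lem:GreedyOnePath} is legitimate.
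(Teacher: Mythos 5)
Your approach is the same as the paper's: iteratively peel off rainbow paths via \Cref{lem:GreedyOnePath}, using that colour classes are matchings (so deleting $m$ colours costs at most $mn/2$ edges) and that $\Delta \leq \eps n/4$ makes the vertex-deletion loss affordable. You correctly identify all the ingredients, but the quantitative bookkeeping you actually write down does not close, and you acknowledge as much. Concretely: your ``cleanest'' bound, that $G_i$ has average degree at least $d - (2+o(1))m_i$, is vacuous as soon as $m_i > d/2$, so it cannot drive the iteration all the way to $m_i \geq (1-\eps)d$. And the invariant you propose at the end, ``residual average degree $\geq (7/8)^i d$,'' is neither verified nor the right statement — it is a shrinking lower bound on the wrong quantity, and even if true it would not imply that the total number of collected edges reaches $(1-\eps)d$ (note also that \Cref{lem:GreedyOnePath} gives no \emph{upper} bound on the extracted path, so the residual average degree need not decay by exactly a constant factor; what must be coupled is edges collected versus residual quality).

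The fix is to run the accounting on the \emph{deficiency} $d' := d - m_i$ rather than on $m_i$ or on the residual average degree. Deleting the $m_i$ used colours leaves at least $nd/2 - nm_i/2 = nd'/2$ edges — the factor $1/2$ on the matching bound cancels exactly against the factor $1/2$ in $e(G) = nd/2$, which is what your lumped estimate $nm_i(1+o(1))$ throws away. Deleting the at most $m_i + i \leq d$ used vertices then costs at most $\Delta \cdot d \leq \eps nd/4 \leq nd'/4$, where the last step uses $d' \geq \eps d$ (valid precisely while you are not yet done). Hence the residual graph has at least $nd'/4$ edges, average degree at least $d'/2$, and the next path has length at least $d'/8$, so $d'$ contracts by a factor $7/8$ per component; after $\lceil \log_{8/7}\eps^{-1}\rceil$ steps the deficiency is at most $\eps d$. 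This is exactly the paper's computation, which packages the iteration as an extremal choice: take the maximum $m$ for which some rainbow path forest with $m$ components has at least $(1-(7/8)^m)d$ edges, and derive a contradiction if this is still below $(1-\eps)d$.
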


\begin{proof}
Let $d_0:=\eps^{-1} \log_{8/7} \eps^{-1}$. 
We build our desired rainbow path forest iteratively, where the first path forest $\mathcal P_1$ is a longest rainbow path in $G$, and given the current path forest $\mathcal P_i$, we obtain $\mathcal P_{i+1}$ by simply incorporating a longest rainbow path to $\mathcal P_i$ that is vertex-disjoint and colour-disjoint from $\mathcal P_i$. We stop as soon as we reach $\mathcal P_m$ with at least $(1-\eps)d$ edges or with $m = \left \lceil \log_{8/7} \eps^{-1} \right \rceil$. 

At the end of this process, we will have produced rainbow path forests $\mathcal{P}_1 \subseteq \ldots \subseteq \mathcal{P}_m$ 
with at most $m \leq \left \lceil \log_{8/7} \eps^{-1} \right \rceil$ components, so it remains to show that $e(\mathcal P_m) \geq (1-\eps)d$. 
We may assume that the process stopped because $m = \left \lceil \log_{8/7} \eps^{-1} \right \rceil$, as otherwise we are done already. 

Now, it is enough to show that $e(\mathcal{P}_i) \ge (1 - (7/8)^i)d$ for each $1 \le i \le m$, as this gives $e(\mathcal P_m) \geq \left(1 - (7/8)^m\right)d \geq (1 - \eps)d$. We do this by induction on $i$. The base case holds since by \Cref{lem:GreedyOnePath}, we have $e(\mathcal{P}_1)\ge d/4 \geq (1-(7/8))d$. Suppose now that $e(\mathcal{P}_{i-1}) \ge (1 - (7/8)^{i-1})d$ for some $2 \le i \le m$. Since the process did not stop at $\mathcal P_{i-1}$, we also know that $e(\mathcal P_{i-1}) < (1-\eps)d$.

We will next show that the graph $G'$ obtained from $G$ by removing the vertices and colours of $\mathcal{P}_{i-1}$ still has large enough average degree so that by another application of \Cref{lem:GreedyOnePath}, the new path we add to $\mathcal{P}_{i-1}$ sufficiently increases its size. To this end, note first that 
\[|V(\mathcal{P}_{i-1})| \leq e(\mathcal{P}_{i-1}) + i-1 \leq (1-\eps)d + \log_{8/7} \eps^{-1} \leq d\]
since $d \ge d_0 \ge \eps^{-1} \log_{8/7} \eps^{-1}$.
Let $d' := d - e(\mathcal{P}_{i-1})$, which satisfies $\eps d \leq d' \leq (7/8)^{i-1} d$. Since $G$ is properly edge-coloured, $G \setminus C(\mathcal{P}_{i-1})$ has at least $nd/2-|C(\mathcal{P}_{i-1})|n/2 =nd'/2$ edges, and at most $\Delta|V(\mathcal{P}_{i-1})|$ of these are incident to some vertex in $V(\mathcal{P}_{i-1})$. This leaves at least
\[nd'/2 - \Delta|V(\mathcal{P}_{i-1})| \geq nd'/2 - \eps n d/4 \geq nd'/4\]
edges in $G'$ since $\Delta \leq \eps n/4$ and $d' \geq \eps d$. Consequently, $G'$ has average degree at least $d'/2$, so by \Cref{lem:GreedyOnePath}, $G'$ has a rainbow path of length at least $d'/8$. So,
\[e(\mathcal{P}_{i})\ge e(\mathcal{P}_{i-1}) + \frac{d'}{8} = d - \frac{7d'}{8} \geq \left(1 - \left(\frac{7}{8}\right)^{i}\right)d,\]
which completes the induction step, and hence the proof.
\end{proof} 

We proceed to our second rainbow path forest lemma. The proof of this lemma is motivated by ideas from \cite{benzing2020long}.

\begin{lemma}\label{lem:rainbow path forest}
	Let $0 < \eps \leq 1$, and let $G$ be a properly edge-coloured digraph with minimum in-degree $\delta^-(G) \geq 9\eps^{-3}$. Then $G$ contains a rainbow path forest with at most $9\eps^{-2}$ components and at least $(1 - \eps)\delta^-(G)$ edges.
\end{lemma}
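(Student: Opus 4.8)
The plan is to build the rainbow path forest greedily, extending one path at a time and starting a new path only when the current one cannot be extended. The key idea, following \cite{benzing2020long}, is that whenever a path cannot be extended from its terminal vertex, the ``missing'' colours (those not yet used in the forest) must be ``blocked'': every out-edge at the terminal vertex of that colour must land inside the current forest. Since $G$ has large minimum in-degree, each such blocked colour forces many edges into the vertex set of the forest, and this can only happen a bounded number of times before the forest is already large.

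More precisely, I would run the following process. Maintain a rainbow path forest $\mathcal P$; at each step, look at the terminal vertex $v$ of the most recently created component $P$. Let $C'$ be the set of colours not appearing in $\mathcal P$. If there is an out-edge from $v$ of a colour in $C'$ going to a vertex outside $V(\mathcal P)$, use it to extend $P$ and repeat. Otherwise, every colour $c \in C'$ is \emph{blocked at $v$}, meaning all out-edges at $v$ coloured $c$ (and there is at least one, as long as $|C(\mathcal P)|$ is not too large — here one should be slightly careful and instead argue via in-degrees, as in \Cref{lem:GreedyPathMinDegree}, or restrict attention to colours actually present at $v$) go into $V(\mathcal P)$; in this case, freeze $P$, start a new component at an arbitrary unused vertex (or stop if $|V(\mathcal P)|$ is already $\geq (1-\eps)\delta^-(G)$), and repeat. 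Terminate when $\mathcal P$ has $\geq (1-\eps)\delta^-(G)$ edges or when we have started too many components.

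To bound the number of components, suppose the process has created $t$ components but $e(\mathcal P) < (1-\eps)\delta^-(G)$ throughout, so $|V(\mathcal P)| < (1-\eps)\delta^-(G) + t$. Each time a component was frozen, its terminal vertex $v$ had at least $\delta^-(G) - |C(\mathcal P)| \geq \delta^-(G) - |V(\mathcal P)| \geq \eps\delta^-(G) - t$ blocked colours at $v$ (counting via the available colours, using properness so distinct in-edges at a vertex have distinct colours to relate colour counts to degree), and each blocked colour contributes a distinct edge into $V(\mathcal P)$ incident to $v$. Summing over the $t$ frozen terminal vertices $v_1,\dots,v_t$, which are distinct, this gives at least $t(\eps\delta^-(G) - t)$ edges landing in $V(\mathcal P)$; but properness bounds the number of edges with a fixed colour entering any single vertex, and the total number of edges inside $V(\mathcal P)$ coloured by $C'$ is at most $|C'| \cdot |V(\mathcal P)|$ — wait, more carefully: each of these blocked edges goes from some $v_i$ into $V(\mathcal P)$, so there are at most $|V(\mathcal P)| \cdot (\text{something})$; the cleanest bound is that the blocked edges at $v_i$ all have colours in $C'$ and heads in $V(\mathcal P)$, and for a fixed head $w$ and fixed colour there is at most one such edge, giving at most $|C'|\cdot|V(\mathcal P)|$ total — but we also know each blocked edge has its tail among the $t$ frozen vertices, so in fact at most $t \cdot |V(\mathcal P)|$ such edges, which combined with the lower bound $t(\eps\delta^-(G) - t)$ yields $\eps\delta^-(G) - t \leq |V(\mathcal P)| < (1-\eps)\delta^-(G) + t$, already almost a contradiction; to actually close it I would instead count more economically, noting that the blocked colours at different $v_i$ need not be disjoint but the corresponding edges have distinct tails, so the right count is: for each colour $c \in C'$, the number of $c$-coloured edges inside the tail-set $\{v_1,\dots,v_t\}$ with head in $V(\mathcal P)$ is at most $\min(t, |V(\mathcal P)|)$ by properness on tails — hence total blocked edges $\leq |C'| \cdot t$, which is automatic and unhelpful. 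The honest route is the one in \Cref{lem:GreedyPathMinDegree}'s spirit applied componentwise with a potential/averaging argument: pick the component whose freezing was ``least efficient'' and derive that $t \leq 9\eps^{-2}$ by comparing $\sum_i (\text{blocked colours at } v_i) \leq e_G(\{v_1,\dots,v_t\}, V(\mathcal P); C')$ against the trivial upper bound $t \cdot |V(\mathcal P)| \le t((1-\eps)\delta^-(G)+t)$ and the lower bound $t(\eps \delta^-(G) - t)$, which forces $\eps\delta^-(G) - t \le (1-\eps)\delta^-(G) + t$, i.e. this does not immediately give the claimed bound, so one must sharpen by only freezing a component when its length is small.

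\textbf{The main obstacle} I anticipate is precisely this accounting: a naive ``blocked colours $\Rightarrow$ edges into $V(\mathcal P)$'' count is too lossy because the same colour can be blocked at several terminal vertices, so one cannot simply sum. The fix (and I expect this is what the authors do) is to only freeze a component once it is \emph{short} — say of length $< \eps\delta^-(G)/3$ — so that a component growing long is ``progress'' and gets counted in $e(\mathcal P)$, while each freezing event, occurring at a vertex with $\gtrsim \eps\delta^-(G)$ blocked colours but a forest of size $\lesssim (1-\eps)\delta^-(G)$, can happen at most $O(\eps^{-2})$ times before the pigeonhole on edges into $V(\mathcal P)$ (each vertex of $V(\mathcal P)$ receives at most $|C'| \le \delta^-(G)$ relevant in-edges, but we need a count linear in $|V(\mathcal P)|$ and in $t$ simultaneously) becomes contradictory. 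Balancing the threshold for ``short'' against the target $(1-\eps)\delta^-(G)$ and the bound $9\eps^{-2}$ is the routine-but-delicate part; the hypothesis $\delta^-(G) \geq 9\eps^{-3}$ is exactly what makes the error terms ($+t$, $+\log$-type losses) negligible compared to $\eps\delta^-(G)$.
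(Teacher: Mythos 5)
There is a genuine gap, and you have in fact diagnosed it yourself: the ``blocked colours give edges into $V(\mathcal P)$'' count never closes, because a single colour class can simultaneously block every terminal vertex, so summing blocked colours over the $t$ frozen endpoints against the trivial upper bound $t\cdot|V(\mathcal P)|$ only yields $\eps\delta^-(G)-t\le |V(\mathcal P)|$, which is vacuous. Your proposed repair --- only freezing a component once it is short --- does not fix this: even with such a threshold, each freezing event produces $\gtrsim \eps\delta^-(G)$ blocked edges with heads in a set of size $<\delta^-(G)$ and tails among $t$ vertices, which is perfectly consistent with properness for every $t$, so no contradiction is reached. (A further issue: the lemma only assumes minimum \emph{in}-degree, so a terminal vertex may have tiny out-degree and your greedy extension from terminal vertices cannot even begin; the paper consequently works with the in-degree-$0$ start points of the paths and extends backwards.)

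The missing idea is an augmentation/rotation argument. In the paper's proof, one takes a \emph{maximum} rainbow path forest with $\approx 4\eps^{-2}$ components, partitions the set $U$ of start points into $s\approx 2\eps^{-1}$ blocks $Q_0,\dots,Q_{s-1}$ of size $q\approx 2\eps^{-1}$, and iteratively defines colour sets $C_0\subseteq C_1\subseteq\cdots$: $C_0$ is the unused colours, and $C_i$ adds the colours of the forest-edges leaving those vertices that send at least two $C_{i-1}$-coloured edges into $Q_{i-1}$. The key claim is that any such vertex must lie on $\mathcal P$ with positive out-degree there; otherwise one performs a cascade of edge swaps (delete the forest edge of the repeated colour, insert the new edge into $Q_{i-1}$, and recurse on the freed vertex) that eventually inserts a genuinely new colour, contradicting maximality. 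A double count of $e(V(G),Q_{i-1};C_{i-1})$ against the minimum in-degree then shows $|C_i\setminus C_0|$ grows by at least $\eps\delta^-(G)/2$ per step, so after $s$ steps it would exceed the total number of colours on $\mathcal P$ --- the contradiction. It is precisely this ``a blocked colour already used in $\mathcal P$ can be stolen via a swap'' mechanism, absent from your proposal, that defeats the lossiness you identified.
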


\begin{proof}
	Set $q = s := \lceil 2\eps^{-1}\rceil \leq 3\eps^{-1}$, and set $d := \delta^-(G)$. Let $\mathcal P$ be a rainbow path forest with at most $qs$ components, with the maximum possible number of edges. Assume for the sake of contradiction that $e(\mathcal P) < (1-\eps)d$. Then since $d = \delta^-(G) \geq 9\eps^{-3} \geq \eps^{-1}qs$, we have
	\begin{equation}\label{eq:vertices in path forest}
	    |V(\mathcal P)| < (1 - \eps)d + qs \leq d \leq |V(G)|,
	\end{equation}
	so by adding isolated vertices to $\mathcal P$ one by one if necessary, we may assume that $\mathcal P$ has exactly $qs$ components. We call the vertices of in-degree $0$ in $\mathcal P$ \emph{start-vertices} and the vertices of out-degree $0$ in $\mathcal P$ \emph{end-vertices}.

    Our general strategy here is to show that we can increase the size of our rainbow path forest (without increasing the number of components), and hence obtain a contradiction to maximality. Here, the easiest way to extend the family would be to find a start-vertex $u$ and an in-neighbour $v$ of $u$ which is not in $\mathcal P$ such that the colour of $(v,u)$ is not in $C(\mathcal P)$ (we would also be happy if $v$ was an end-vertex from a path of $\mathcal P$ not containing $u$). 
    Unfortunately, all of the in-neighbours of the start-vertices using colours not in $C(\mathcal{P})$ might already be in $\mathcal P$ and not be end-vertices. If that is the case, we can still use these in-neighbours in the following way. Suppose $u$ is a start-vertex of a path in $\mathcal P$ and $v$ is its in-neighbour using a colour not in $C(\mathcal{P})$. Then we can replace the out-edge of $v$ in $\mathcal P$ with the edge $(v,u)$ to obtain a new path forest with the same number of components.
    This ``switch'' has the effect of freeing up the colour of the out-edge of $v$ in $\mathcal{P}$,
    which can now itself be used in new switches. We will show that this will eventually lead to a short sequence of switches which does allow us to increase the size of the forest.
    
    In order to carry out this strategy, let us introduce some notation. 
    Let $U$ be the set of start-vertices, and let $W$ be the set of end-vertices.
    For $v \in V(\mathcal P) \setminus W$, let $C(v)$ denote the colour of the edge coming out of $v$ in $\mathcal P$ (i.e., the colour we will be able to free up if we can perform a switch at $v$).
	
	Let $\{Q_0, \ldots, Q_{s-1}\}$ be a partition of $U$ into sets of size exactly $q$. This partition will allow us to avoid issues with chains of switches. 
    In particular, a switch at level $i$ will only ever use a ``fresh'' set of start-vertices from the set $Q_i$.
    
    Let $C_0 := C(G) \setminus C(\mathcal P)$ denote the set of colours in $G$ that do not appear in $\mathcal P$ (initial, ``free'' colours). For $1 \leq i \leq s$, let
	\begin{align*}
		V_i &:= \{v \in V(G) : \deg^+(v,Q_{i-1}; C_{i-1}) \geq 2\}; \\
        V_i' &:= \{v \in V(G) : \deg^+(v,Q_{i-1}; C_{i-1})=1\}; \\
		C_i &:= C_{i-1} \cup \{C(v) : v \in V_i \cap V(\mathcal P) \setminus W\}.
	\end{align*}

Here, $C_i$ should be thought of as ``freeable'' colours at stage $i$ (meaning they can be made available after performing a chain of up to $i$ switches). $V_i$ can be thought of as the set of vertices $v$ which can be appended to a path with start-vertex in $Q_{i-1}$ (thereby freeing $C(v)$ which gets added to $C_i$) by using a colour which we managed to free up in the previous stage. A slight technicality here is that we actually require $v$ to have at least $2$ options of doing so, in order to be able to avoid creating a cycle if $v$ is only connected to the start-vertex of its own path. The set $V_i'$ consists only of vertices with a single such option.

The following claim formalises the idea that if any $V_i$ or $V_i'$ contains a vertex outside $V(\mathcal{P})$, we can incorporate this vertex to the rainbow path forest to increase its size via a chain of switches. The claim also says that we cannot have in $V_i$ a vertex from $W$, or we would be able to join two paths and hence also increase the size of the forest in the same way. 
See \Cref{fig:path-forest} for an illustration.

\begin{figure}[h]
    \centering
    \includegraphics[width=0.93\linewidth]{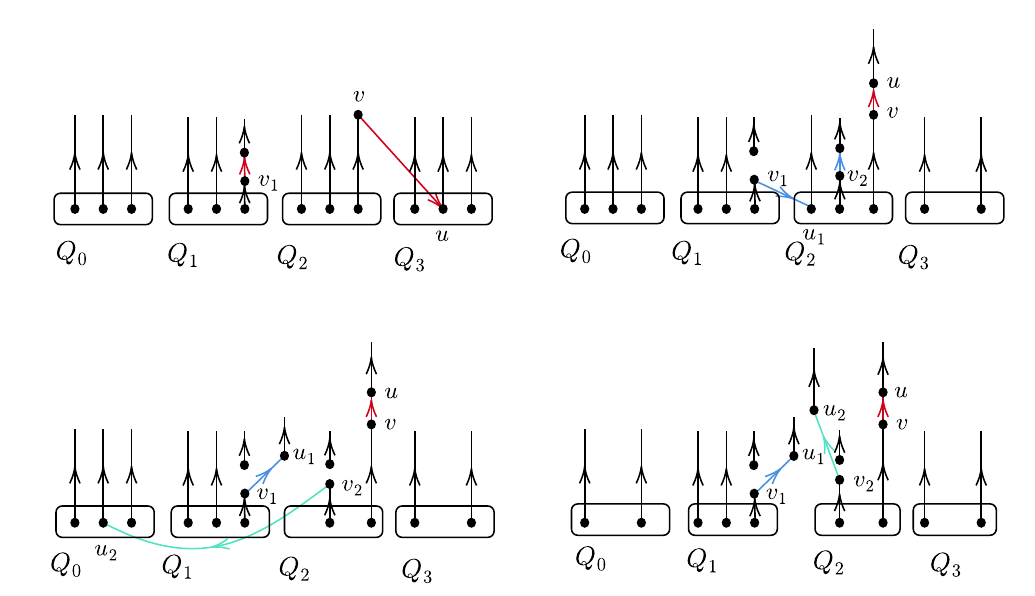}
    \caption{Illustration of the argument in the claim with $i=4$, $v \in W \cap V_4$, red is in $C_3 \setminus C_2$, $C(v_1)$ is red, $v_1 \in V_3$, blue is in $C_2$, $C(v_2)$ is blue, $v_2 \in V_1$, and green is in $C_0$.}
    \label{fig:path-forest}
\end{figure}
    
    \begin{claim*}
        For every $1 \leq i \leq s$, we have $V_i \subseteq V(\mathcal P) \setminus W$ and $V_i' \subseteq V(\mathcal P)$. 
    \end{claim*}
    \begin{cla_proof}
        Suppose not. Then there exists a vertex $v$ such that either $v \in W \cap V_i$ or $v \in (V_i \cup V_i') \setminus V(\mathcal P)$.
       
        By definition, since $v \in V_i \cup V_i'$ there exists an out-neighbour $u \in Q_{i-1}$ of $v$ such that $C(v,u) \in C_{i-1}$. In fact, if $v \in V(\mathcal{P})$, it is also in $V_i$, so there are at least two choices for such a $u$, and we can ensure that $u$ and $v$ are not part of the same path in $\mathcal{P}$. Now, if $v \in V(\mathcal{P})$, so also in $W$, we append the path of $\mathcal{P}$ ending in $v$ to the path of $\mathcal{P}$ starting with $u.$ If $v \notin V(\mathcal{P})$ we just append $v$ itself. Now, if $C(v,u) \in C_0$, we maintain rainbowness and manage to increase the size of our path forest, contradicting maximality. Otherwise, we are using this colour twice, so we remove the edge $(v_1,w_1)$, which already used colour $C(v,u)$ in  $\mathcal{P}$ to obtain a new rainbow path forest $\mathcal{P}_1$.  We note that $e(\mathcal{P}_1)=e(\mathcal{P})$, that there is exactly one edge of $\mathcal{P}$ not in $\mathcal{P}_1$, namely the one using colour $C(v,u)$, and that all of the vertices in $Q_{0} \cup \ldots \cup Q_{i-2}$ are still start-vertices of paths in $\mathcal{P}_1$. 

        Since $C(v,u) \notin C_0$, there exists $1\le i_1\le i-1$ such that $C(v,u) \in C_{i_1} \setminus C_{i_1-1}.$ Since $C(v_1)=C(v,u)$ we conclude $v_1 \in V_{i_1}$ and we can pick its out-neighbour $u_1 \in Q_{i_1-1}$, which is not on the same path in $\mathcal{P}_1$ as $v_1$, such that $C(v_1,u_1)\in C_{i_1-1}$ and we can repeat the argument above.   
        At stage $j$ of the process, we have an edge $(v_j,u_j)$ with 
        \begin{enumerate}
            \item \label{itm:switch1}
            $C(v_j,u_j)\in C_{i_j-1}$,
            \item \label{itm:switch2} $u_j \in Q_{i_j-1},$ and
            \item \label{itm:switch3} $v_j \in V_{i_j}\cap V(\mathcal{P}) \setminus W$,
        \end{enumerate}  
        where $1\le i_j <i_{j-1} < \ldots < i_{1}\le i-1,$ and a path forest $\mathcal{P}_{j}$ such that 
        \begin{enumerate}[label=\alph*)]
            \item \label{itm:path1}
            $e(\mathcal{P}_{j})=e(\mathcal{P})$, 
            \item \label{itm:path2} all of the edges of $\mathcal P$ with colours in $C_{i_j-1}$ are still in $\mathcal{P}_j$,
            \item \label{itm:path3} $Q_0\cup \ldots \cup Q_{i_j-1}$ are still start-vertices of paths in $\mathcal{P}_{j}$,
            \item \label{itm:path4} $v_j$ is an endpoint of a path in $\mathcal{P}_j,$ and $u_j$ is not on that same path. 
        \end{enumerate}
        
        Property \ref{itm:path4} allows us to add the edge $(v_j,u_j)$ to $\mathcal{P}_j$.
         If $C(v_j,u_j) \in C_0$ this maintains rainbowness and we increase the size, contradicting maximality. Otherwise, there exists $1\le i_{j+1}<i_j$ such that $C(v_j,u_j) \in C_{i_{j+1}} \setminus C_{i_{j+1}-1}.$ By definition, this implies there is a $v_{j+1} \in V_{i_{j+1}}\cap V(\mathcal{P}) \setminus W$ such that $C(v_{j+1}) \in C_{i_{j+1}}$ and the edge $(v_{j+1},w_{j+1})$ using this colour is in $\mathcal{P}_j$. We delete $(v_{j+1},w_{j+1})$ to create $\mathcal{P}_{j+1}$. We pick $u_{j+1} \in Q_{i_{j+1}-1}$ as an out-neighbour of $v_{j+1}$ which is not on the same path in $\mathcal{P}_{j+1}$, and has $C(v_{j+1},u_{j+1}) \in C_{i_{j+1}-1}$, which we can by definition of $V_{i_{j+1}} \ni v_{j+1}$. So, by construction, we maintain properties \ref{itm:switch1}--\ref{itm:switch3}.
         
         Let us now verify the properties of $\mathcal{P}_{j+1}$. We added one edge and deleted one edge, so property \ref{itm:path1} still holds. We only deleted one edge, which had a colour in $C_{i_{j+1}}$, so property \ref{itm:path2} is also still preserved. Only one vertex stopped being a start-vertex, namely $u_j \in Q_{i_j-1}$, so property \ref{itm:path3} is maintained. Since we deleted $(v_{j+1},w_{j+1})$ which was an edge in $\mathcal{P}_j$ we do make $v_{j+1}$ an endpoint of a path in $\mathcal{P}_{j+1}$, and we picked $u_{j+1}$ not to be on that path so property \ref{itm:path4} also holds.

         Since the indices $i_j$ strictly decrease, this shows we can repeat this process until at some point we increase the size of our path forest,
         giving us a contradiction.
        \end{cla_proof}

Before moving on, we draw two immediate conclusions from the claim. First, $V_i \subseteq V(\mathcal{P}) \setminus W$ implies that for each $1 \leq i \leq s$, every vertex $v \in V_i$ witnesses a unique colour $C(v) \in C_i \cap C(\mathcal P)$, so
	\begin{equation}\label{eq:colour growth}
		|C_i \cap C(\mathcal P)| \geq |V_i|.
	\end{equation}
	Second, $V_i' \subseteq V(\mathcal P)$, together with~\eqref{eq:vertices in path forest}, implies that
 \begin{equation}\label{eq:sending one edge}
 |V_i'| \leq |V(\mathcal P)| \leq d.
 \end{equation}

Our plan now is to argue that the number of freeable colours, namely $|C_i|$, grows so fast that $C_s$ must contain at least~$d$ colours from $C(\mathcal P)$, which is impossible. We will achieve this by counting all edges ending in $Q_{i-1}$ using colours from~$C_{i-1}$. There are many such edges by our minimum degree assumption, which will force $V_i$ to be large enough to ensure that many new colours are freed up in each stage by~\eqref{eq:colour growth}. 

	More specifically, we prove a lower bound on $|V_i| \leq |C_i \cap C(\mathcal P)|$ in terms of $|C_{i-1} \cap C(\mathcal P)|$ by counting
    the quantity $e(V(G), Q_{i-1}; C_{i-1})$ as follows. On the one hand, because $C(G) \setminus C(\mathcal P) = C_0 \subseteq C_{i-1}$, the number of colours in $C(G) \setminus C_{i-1}$ is only $|C(\mathcal P)| - |C_{i-1} \cap C(\mathcal P)| \leq (1-\eps)d - |C_{i-1} \cap C(\mathcal P)|$. Therefore, since $G$ is properly edge-coloured with minimum in-degree $d$, we have $\deg^-(u; C_{i-1}) \geq |C_{i-1} \cap C(\mathcal P)| + \eps d$ for every $u \in Q_{i-1}$, hence
    \[e(V(G), Q_{i-1}; C_{i-1}) \geq (|C_{i-1} \cap C(\mathcal P)| + \eps d)q.\]
    On the other hand, by \eqref{eq:sending one edge} and our choice of $q$, we have
	\begin{align*}
		(|C_{i-1} \cap C(\mathcal P)| + \eps d)q &\leq e(V(G), Q_{i-1}; C_{i-1}) \\
        &= e(V_i, Q_{i-1};C_{i-1}) + e(V_i', Q_{i-1}; C_{i-1}) \\
		&\leq |V_i|q + |V_i'| \\
        &\leq (|V_i| + \eps d/2)q.
	\end{align*}
	Rearranging and applying~\eqref{eq:colour growth} gives 
	\[|C_i \cap C(\mathcal P)| \geq |V_i| \geq |C_{i-1} \cap C(\mathcal P)| + \eps d/2,\]
	so by induction and our choice of $s$, we have
	\[|C_s \cap C(\mathcal P)| \geq s \eps d/2 \geq d > |C(\mathcal P)|,\]
    which is a contradiction.
\end{proof}

\section{Expansion-based tools}\label{sec:expansion}

In this section, we establish a number of useful properties of a certain type of expander (directed) graphs. (These properties are stated for digraphs, but the corresponding properties for undirected graphs also hold, and they follow from the directed version by considering the associated symmetric digraph.) The motivation is that in the previous section, we saw how to find
a rainbow path forest with few components in properly edge-coloured (directed) graphs. In expander digraphs, we are able to connect the endpoints of the path forest into a single long path (see Lemma~\ref{lem:connecting lemma}). Furthermore, in \textit{any} dense regular digraph, we are able to find an expander subgraph that preserves all the essential properties of the original graph (see Lemma~\ref{lem:find robust expander}). These two lemmas are used frequently in the remainder of the paper.

\par We start by proving an introductory result in \Cref{sec:strong-expanders} assuming a stronger notion of expansion than we will use throughout the paper, as a warm-up to introduce and motivate some of our ideas. In \Cref{sec:robust-expanders}, we introduce the notion of robust expansion  and show how to find a robust expander in a nearly regular dense digraph. In \Cref{sec:connectivity}, we show that in properly coloured digraphs that are robust expanders, every pair of vertices remains connected via rainbow paths, even after the deletion of a certain number of vertices and colours. Both of these results are used in subsequent sections to handle the dense versions of the corresponding theorems.

\subsection{Long rainbow paths in  strong expanders}\label{sec:strong-expanders}

In this section, we show how to find long rainbow paths in digraphs with very strong expansion properties. The main lemma of this section (\Cref{lem:warmup}) is not used later in our paper, but we present it as a warm-up to highlight the connection between expansion and long rainbow paths. In particular, this lemma implies that an $n$-vertex random $d$-regular (directed) graph with $d = o(n)$ contains a rainbow path of length $d - o(d)$ with high probability. We learned that Conlon and Haenni \cite{ConlonHaenni} independently proved a similar lemma for this purpose in the undirected setting, using a P\'osa rotation argument. Our proof is slightly different, and it applies in both directed and undirected settings. Later on, we prove another lemma (\Cref{lem:long path in robust expander}) which implies the same conclusion for dense random $d$-regular (directed) graphs.

\begin{definition}
    A digraph $G$ is \emph{$(\eps,d)$-locally sparse} if every set $U \subseteq V(G)$ of size at most $d$ satisfies $e_G(U, U) \leq \eps d^2$.
\end{definition}

\begin{lemma}\label{lem:warmup}
    Let $\eps \in (0,1)$, and let $d \in \mathbb N$. Let $G$ be a properly edge-coloured digraph of minimum out-degree $d$. If $G$ is $(\eps^2/4, d)$-locally sparse, then $G$ has a rainbow directed path of length at least $(1-\eps)d$.
\end{lemma}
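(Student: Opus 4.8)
The plan is to take a rainbow directed path $P$ of maximum length $\ell$ and argue that if $\ell < (1-\eps)d$, then local sparsity is violated. Let $v$ be the terminal vertex of $P$, and let $C'$ be the set of colours not used on $P$, so $|C'| \geq d - \ell > \eps d$. First I would observe, exactly as in \Cref{lem:GreedyPathMinDegree}, that every $C'$-coloured out-edge from $v$ must land inside $V(P)$, since otherwise we could extend $P$. The key new idea compared to the minimum-degree argument is to also rotate: whenever such an out-edge from $v$ goes to some internal vertex $x_i$ of $P = (x_0, x_1, \ldots, x_\ell = v)$, and moreover the colour of the edge $(x_i, x_{i+1})$ of $P$ following $x_i$ is \emph{also} in $C'$ (or more carefully, the rotation produces a new rainbow path), we get a new rainbow path of the same length ending at $x_i$. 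So I would track the set $R$ of all vertices reachable as endpoints of maximum-length rainbow paths obtained from $P$ by a sequence of such rotations, and show $R$ together with its out-neighbourhoods along $C'$-edges stays confined to $V(P)$, giving many edges inside a small set.

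**The main estimate.** Let me be more precise about how to count. Write $P = (x_0, \ldots, x_\ell)$ and for the endpoint $v = x_\ell$ consider its out-edges with colours in $C'$: there are at least $d - \ell \geq \eps d$ of them (at most $\ell$ are accounted for; and all of these go into $V(P)$). For each such edge $(v, x_i)$, if $C(x_i, x_{i+1}) \in C'$ as well — which fails for at most $|C(P) \cap C'| = 0$... so actually one must be slightly more careful: the rotation $(x_0, \ldots, x_i, x_\ell, x_{\ell-1}, \ldots, x_{i+1})$ is rainbow precisely when $C(v, x_i) \notin C(P)$, i.e.\ when $C(v,x_i) \in C'$, which is automatic. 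So \emph{every} $C'$-coloured out-edge from $v$ yields a rotation to a new maximum-length rainbow path with a new endpoint $x_{i+1}$. Thus the set $R$ of reachable endpoints has the property that from each $w \in R$, all but at most $\ell$ of the $d$ out-edges of $w$ have colour in $C(P_w)$ (the colours of the corresponding path), and the $C'$-type out-edges all go into $V(P) \subseteq V(P_w)$ of size $\ell + 1$. Since $|R|$ could be as large as $\Omega(d)$ but in the worst case is small, the cleanest route is: take $U := V(P)$, which has size $\ell + 1 \leq d$; every vertex of $R$ sends at least $d - \ell \geq \eps d$ out-edges into $U$; and one shows $|R| \geq \eps d$ (if few rotations are possible, a direct contradiction arises because the endpoint has $\geq \eps d$ out-edges into the $\ell$ internal vertices while the colours on those edges are all distinct and must reappear as path-colours, forcing the rotated endpoints to be numerous). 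Then $e_G(U,U) \geq |R| \cdot \eps d \geq \eps^2 d^2 / 4 \cdot 4$... — I'd tune constants so that the count exceeds $(\eps^2/4) d^2$, contradicting $(\eps^2/4,d)$-local sparsity with the set $U$ (enlarging $U$ slightly to include $R$ if needed, which only adds $O(\log)$-many... no, $R \subseteq V(P) = U$ automatically since rotated paths use the same vertex set).

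**Where the difficulty lies.** The routine part is the rotation bookkeeping and verifying that each rotation keeps the path rainbow of the same length with vertex set unchanged. The genuinely delicate point is establishing that $|R|$, the number of distinct rotation-endpoints, is linear in $d$ — a P\'osa-type "many endpoints" lemma in the rainbow setting. The trick is that the $\geq \eps d$ out-edges from the current endpoint into $V(P)$ have $\geq \eps d$ distinct colours (proper colouring), each of which lies in $C(P)$, hence corresponds to a distinct edge of $P$, hence to a distinct potential rotation and a distinct new endpoint; iterating, $|R|$ cannot stay bounded away from $\eps d$ without all these edges piling up inside $V(P)$, which is exactly what local sparsity forbids. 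So the main obstacle I anticipate is making this iteration/accounting airtight — specifically, ensuring the rotated paths are genuinely distinct and that the double-count $e_G(V(P), V(P)) \geq (\eps^2/4)d^2$ comes out with the stated constant — rather than anything conceptually deep. Once $|R| \cdot (d-\ell) > (\eps^2/4) d^2$ is in hand, taking $U = V(P)$ of size $\ell + 1 \leq d$ contradicts local sparsity and forces $\ell \geq (1-\eps)d$.
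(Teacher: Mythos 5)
There is a genuine gap: the P\'osa rotation at the heart of your argument is not available in a directed graph, and the lemma is stated for digraphs of minimum \emph{out}-degree $d$. Your rotation replaces $(x_0,\dots,x_\ell)$ by $(x_0,\dots,x_i,x_\ell,x_{\ell-1},\dots,x_{i+1})$, which traverses the segment $x_{i+1},\dots,x_\ell$ \emph{backwards}; the edges $(x_\ell,x_{\ell-1}),\dots,(x_{i+2},x_{i+1})$ need not exist in a digraph, and moreover the edge you actually found is the out-edge $(v,x_i)=(x_\ell,x_i)$, which is oriented the wrong way to serve as the connecting edge $(x_i,x_\ell)$ of the rotated path. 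With a directed edge $(x_\ell,x_i)$ the only structure available is a ``lollipop'' (a directed path feeding into a directed cycle), from which no directed path of length $\ell$ with a new terminal vertex can be extracted in general. This is exactly why the paper remarks that the Conlon--Haenni rotation proof is confined to the undirected setting. Even there, your key quantitative step --- that the set $R$ of rotation endpoints satisfies $|R|\ge \eps d$ --- is only asserted, and it is the delicate part of any rotation argument in the rainbow setting, since each rotation also changes the set of forbidden colours.

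The paper avoids rotation altogether with a ``broom'': a rainbow subgraph consisting of a handle $P$ of length $\ell$ ending at $v$ together with an outward star of $\lceil \eps d/2\rceil$ edges at $v$, with $\ell$ maximised. Each bristle-end $u$ has at least $\eps d$ out-edges in colours fresh to $E(P)\cup\{(v,u)\}$; if at least $\eps d/2$ of these left $V(P)$, the handle could be extended through $u$ to give an $(\ell+1,\lceil\eps d/2\rceil)$-broom, so at least $\eps d/2$ of them land in $V(P)$. Summing over the $\lceil\eps d/2\rceil$ bristle-ends yields $e(V(H),V(H))\ge \eps^2d^2/4$ inside a set of fewer than $d$ vertices, contradicting local sparsity. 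This device uses only out-edges and is orientation-agnostic; you would need to replace your rotation step by something of this kind (or restrict the lemma to undirected graphs).
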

\begin{proof}
    Define an $(\ell,k)$-\emph{broom} in $G$ to be a rainbow subgraph $H = P \cup T$ of $G$, where $P$ is a path of length $\ell$ ending at some vertex $v$, and $T$ is an outward-oriented star with $k$ edges centred at $v$, with $V(P) \cap V(T) = \{v\}$. Clearly $G$ has a $(0, \lceil \eps d/2 \rceil)$-broom since $\delta^+(G) \geq d \geq \lceil \eps d/2 \rceil$.

    Consider an $(\ell,\lceil \eps d/2 \rceil)$-broom $H = P \cup T$ in $G$ with $\ell$ as large as possible. Suppose, towards a contradiction, that $G$ has no rainbow path of length at least $(1-\eps)d$, so $\ell < (1 - \eps)d - 1$. Let $v$ be the centre of $T$.
    For each vertex $u \in V(T) \setminus \{v\}$, there are at least $\eps d$ colours leaving $u$ which do not appear among $E(P) \cup \{(v,u)\}$. Let $S_u$ be this set of colours, and let
    \[N_u := \{w \in N^+(u) : C(u,w) \in S_u\}\]
    be the out-neighbourhood of $u$ in these colours. By the maximality of $H$, we have
    \[|N_u \setminus V(P)| < \eps d/2,\]
    for every $u \in V(T) \setminus \{v\}$, or else we would have an $(\ell+1, \lceil \eps d/2 \rceil)$-broom. This gives
    \[e(V(H), V(H)) \geq e(V(T) \setminus \{v\}, V(P)) \geq \sum_{u \in V(T) \setminus \{v\}} |N_u \cap V(P)| \geq \eps^2 d/4.\]
    But $|V(H)| < (1-\eps)d-1 + \lceil \eps d/2 \rceil \leq d$, so $G$ fails to be $(\eps^2/4, d)$-locally sparse, a contradiction.
\end{proof}

\subsection{Robust expanders and their existence in dense digraphs}\label{sec:robust-expanders}

In this section, we introduce the following robust expansion property, which was first given in this form in \cite{kuhn2010hamiltonian}, and was studied systematically in the work of K\"uhn, Lo, Osthus, and Staden \cite{kuhn2015robust}. 

\begin{definition}
    Let $G$ be a directed graph on $n$ vertices. For $U \subseteq V(G)$ and $\nu > 0$, we define the \emph{$\nu$-robust out-neighbourhood} of $U$ in $G$ to be the set
    \[RN_{\nu,G}^+(U) := \{v \in V(G) : |N^-(v) \cap U| \geq \nu n\}.\]
    We say that $G$ is a \emph{robust $(\nu, \tau)$-out-expander} if every $U \subseteq V(G)$ with $\tau n \leq |U| \leq (1-\tau)n$ satisfies
    \[|RN_{\nu, G}^+(U) \setminus U| \geq \nu n.\]
    Similarly, we define the \emph{$\nu$-robust in-neighbourhood} of $U$ in $G$ to be the set
    \[RN_{\nu,G}^-(U) := \{v \in V(G) : |N^+(v) \cap U| \geq \nu n\}.\]
    We say that $G$ is a \emph{robust $(\nu, \tau)$-in-expander} if every $U \subseteq V(G)$ with $\tau n \leq |U| \leq (1-\tau)n$ satisfies
    \[|RN_{\nu, G}^-(U) \setminus U| \geq \nu n.\]
\end{definition}

The following lemma shows how to find such a robust expander subgraph with essentially the same degrees in any
digraph with linear minimum degree and similar in- and out-degrees at every vertex. To give some intuition, robust expanders are essentially characterised by the lack of sparse cuts across linear-sized vertex sets; having no sparse cuts automatically gives a lower bound on the size of the robust neighbourhoods of sets by a simple counting argument. So as long as there exists a sparse cut $(V_1,V_2)$ in a graph $G$ with $V_1$ and $V_2$ not too small, we can delete all edges between $V_1$ and $V_2$, and repeat the same argument in parallel in $G[V_1]$ and $G[V_2]$. The algorithm eventually outputs a partition of $V(G)$ as $V_1,\ldots,V_k$ where there are no sparse cuts within any $V_i$, so that each $G[V_i]$ is a robust expander. Indeed, in \cite{kuhn2015robust} and \cite{gruslys2021cycle}, a formal argument in this direction is given to partition a given dense graph into its robustly expanding pieces whilst deleting only a negligible fraction of the edge-set.
\par Here we implement a similar algorithm for digraphs that produces a single robust expander instead of a partition, as this will be sufficient for our purposes. In an application in Section~\ref{sec:mop}, we will also require an additional technical property \ref{findrobustexpanderdense:3}. A careful analysis of the proof shows that the dependency between the parameters $\nu$ and $\alpha$ is double exponential when $\tau \leq \alpha$, while if we did not require \ref{findrobustexpanderdense:3}, one could get single exponential dependency. 



\begin{lemma}[Pass-to-expander lemma]\label{lem:find robust expander}
    For any $\delta, \tau, \alpha > 0$, there exist $\gamma, \nu > 0$ such that the following holds for every positive integer $n$ and every $n$-vertex digraph $G$. Suppose that $\delta^\pm(G) \geq \alpha n$ and that $|\deg^+(v) - \deg^-(v)| \leq \gamma n$ for every $v \in V(G)$. Let $w : V(G) \to \mathbb R_+$ be any weight function on the vertices of $G$. Then there exists a nonempty induced subgraph $G'$ of $G$ satisfying the following:
    \stepcounter{propcounter}
   \begin{enumerate}[label = {{{\normalfont\textbf{\Alph{propcounter}\arabic{enumi}}}}}]
        \item $G'$ is a robust $(\nu, \tau)$-out-expander and a robust $(\nu, \tau)$-in-expander; \label{findrobustexpanderdense:1}
        \item every vertex $v \in V(G')$ satisfies $\deg_{G'}^+(v) \geq \deg_G^+(v) - \delta n$ and $\deg_{G'}^-(v) \geq \deg_G^-(v) - \delta n$. \label{findrobustexpanderdense:2}
        \item $\frac{1}{|V(G')|}\sum_{v \in V(G')} w(v) \leq 2\frac{1}{n}\sum_{v \in V(G)} w(v)$. \label{findrobustexpanderdense:3}
    \end{enumerate}
\end{lemma}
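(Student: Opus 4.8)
The plan is to find the robust expander subgraph by a standard iterative ``sparse cut'' argument, adapted to track both the degree loss (property \ref{findrobustexpanderdense:2}) and the average weight (property \ref{findrobustexpanderdense:3}). We work with the following potential: call an induced subgraph $H$ of $G$ \emph{bad} if it is nonempty but fails to be a robust $(\nu,\tau)$-out-expander or a robust $(\nu,\tau)$-in-expander, and otherwise call it \emph{good}. Starting from $G_0 := G$, suppose we have built a chain $G = G_0 \supseteq G_1 \supseteq \cdots \supseteq G_i$ of induced subgraphs, each of size at least $n/2$, each obtained from the previous by deleting a carefully chosen vertex set. If $G_i$ is good we stop; otherwise there is a set $U = U_i \subseteq V(G_i)$ with $\tau|V(G_i)| \le |U| \le (1-\tau)|V(G_i)|$ witnessing the failure of, say, out-expansion, meaning $|RN^+_{\nu,G_i}(U)\setminus U| < \nu |V(G_i)|$. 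We then pass to $G_{i+1} := G_i[U']$ where $U'$ is whichever of $U$, $V(G_i)\setminus U$ has larger total $w$-weight (in case of a tie, larger size), after first deleting from it the at most $\nu|V(G_i)|$ vertices of the robust neighbourhood that stick out; the point is that every vertex remaining in $G_{i+1}$ loses at most $\nu n$ out-edges (or in-edges) in this step, because a vertex of $U'$ not in the deleted robust-neighbourhood set sends at most $\nu|V(G_i)| \le \nu n$ edges to the other side.

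The key bookkeeping: choosing the heavier-weight side means $\sum_{v\in V(G_{i+1})} w(v) \ge \tfrac12 \sum_{v\in V(G_i)} w(v) - (\text{weight of the }{\le}\nu n\text{ deleted vertices})$; but we also lose a $(1-\tau\pm o(1))$ fraction of vertices each step, so the sizes shrink geometrically and the process must terminate after at most $t := O_{\tau}(\log(1/\tau))$ steps before $|V(G_i)|$ would drop below, say, $n/2$ — and we will choose $\nu$ small enough ($\nu \ll \delta,\tau,\alpha, 1/t$) that in fact it terminates while $|V(G_i)| \ge n/2$ always holds, and the total degree loss $t\cdot\nu n \le \delta n$, and the total weight lost to deleted vertices is negligible. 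Here one must be slightly careful to phrase the weight bound so the ``heavier side has at least half the weight'' inequalities compound correctly against the deleted-vertex weight; the cleanest route is to first argue that we may delete from consideration all ``heavy'' vertices (those with $w(v) > \tfrac{2}{n}\sum w$ times a suitable constant) — but since property \ref{findrobustexpanderdense:3} only asks for a factor $2$, it is cleaner to track $\sum_{v \in V(G_i)} w(v) \ge \bigl(1 - o(1)\bigr)\, 2^{-i} \sum_{v\in V(G)} w(v)$ together with $|V(G_i)| \le (1-\tau/2)^i n$ say, and observe $(1-\tau/2)^i/2^{-i} \to 0$ is the wrong direction — so instead we should only guarantee $|V(G_i)| \ge n/2$ via the stronger claim that the process stops quickly, and then $\tfrac{1}{|V(G_i)|}\sum w(v) \le \tfrac{1}{n/2}\sum_{v\in V(G)} w(v) = \tfrac{2}{n}\sum_{v\in V(G)} w(v)$, which is exactly \ref{findrobustexpanderdense:3} and requires no side choice at all.

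In light of that last observation the argument simplifies: we do \emph{not} need the weight to guide which side we keep. At each bad step, pass to the side $U'$ (minus the $<\nu n$ protruding robust-neighbourhood vertices) with $|V(G_{i+1})| \ge \tau|V(G_i)| - \nu n \ge (\tau/2)|V(G_i)|$ — wait, we want the \emph{large} side — pass instead to $V(G_i)\setminus U$ when $|U| \le |V(G_i)|/2$ and to $U$ otherwise, so $|V(G_{i+1})| \ge |V(G_i)|/2 - \nu n$. This only halves the size per step, but the real gain is a cleaner invariant; iterating, after $i$ steps $|V(G_i)| \ge (1-\tau)^{\text{?}}$ — the honest bound is that while $G_i$ is bad we actually shrink by the smaller side which has size $\ge \tau|V(G_i)|$, so $|V(G_{i+1})| \le (1-\tau)|V(G_i)| + \nu n$, giving termination in $O(\log(1/\tau)/\tau)$ steps with $|V(G_i)| \ge n/2$ throughout provided $\nu$ is small; then \ref{findrobustexpanderdense:2} follows from total loss $\le (\text{number of steps})\cdot \nu n \le \delta n$, \ref{findrobustexpanderdense:3} follows from $|V(G')| \ge n/2$, and \ref{findrobustexpanderdense:1} holds because we stopped at a good subgraph. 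The hypotheses $\delta^\pm(G) \ge \alpha n$ and near-regularity $|\deg^+(v)-\deg^-(v)|\le \gamma n$ are needed to guarantee $G'$ is nonempty and in the valid size range $[\tau|V(G_i)|, (1-\tau)|V(G_i)|]$ for the expander definition to bite, and to ensure the ``bad set'' $U$ is never forced to be trivially small or all of $V(G_i)$; this interplay — making sure $G'$ stays large enough that $\alpha n$-minimum-degree keeps it from collapsing, while $\nu$ is small enough that the cumulative degree loss stays under $\delta n$ — is the main point requiring care, though it is routine once the hierarchy $\nu,\gamma \ll \delta,\tau,\alpha$ is fixed.
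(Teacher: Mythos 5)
Your high-level plan (iteratively cut along witnesses of expansion failure until a robust expander remains, while tracking degree loss and weight) matches the paper's, but two of your key claims are false as stated, and they are exactly the points where the real work happens. First, the per-step degree preservation. You assert that after deleting the at most $\nu|V(G_i)|$ protruding robust-neighbourhood vertices, every remaining vertex loses at most $\nu n$ out- (or in-) edges. This does not follow. The failure $|RN^+_{\nu,G_i}(U)\setminus U| < \nu|V(G_i)|$ only says that few vertices \emph{outside} $U$ \emph{receive} many edges from $U$; it bounds the \emph{total} number of edges from $U$ to its complement by roughly $2\nu|V(G_i)|^2$, but an individual vertex $u\in U$ can still send \emph{all} of its out-edges across the cut (to distinct vertices, each receiving only that one edge from $U$), so its out-degree can collapse to $0$ when you restrict to $U$. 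Moreover, failure of out-expansion says nothing about edges in the reverse direction, from the complement into $U$. This is why the paper (a) cuts along sets whose boundary is sparse in \emph{both} directions, using the near-regularity hypothesis $|\deg^+(v)-\deg^-(v)|\le\gamma n$ to convert a bound on $\partial^+ + \partial^-$ into bounds on each of $\partial^+$ and $\partial^-$ separately (your proposal never actually uses this hypothesis for its real purpose), and (b) performs a final cleanup deleting the few vertices with large boundary degree, with an increasing hierarchy of sparsity thresholds $\rho_0\ll\cdots\ll\rho_t$ so that the cumulative edge loss of all earlier cuts is negligible compared with the expansion available at the final stage. Without both ingredients, the degree condition (and the in-expansion half of the first condition) cannot be recovered.

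Second, your shortcut for the weight bound rests on $|V(G')|\ge n/2$, which is false in general: take $G$ to be a disjoint union of four cliques of size $n/4$ (so $\alpha = 1/5$ works); any union of two or more cliques has a set with empty robust out-neighbourhood, so the process is forced down to a single clique of size $n/4$. Hence you cannot dispense with the weight-guided choice of side; the paper keeps whichever of $U_i$ and its complement has the smaller \emph{average} weight, and the factor $2$ is paid only once, by the final cleanup deletion. Relatedly, your termination argument ("keep the larger side, so the size at most halves per step") provides no lower bound on $|V(G')|$ after several steps; the correct argument uses $\delta^\pm(G)\ge\alpha n$ to show that a subgraph on at most $\alpha n/2$ vertices cannot have a small out-boundary, so the process must stop while the current subgraph still has more than $\alpha n/2$ vertices, and the geometric shrinkage then caps the number of iterations.
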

\textbf{Remark.} The preceding lemma may be applied to undirected graphs with minimum degree $\Omega(n)$ as well, by way of their associated symmetric digraphs, and the condition that $|\deg^+(v) - \deg^-(v)| \leq \gamma n$ is satisfied automatically for any~$\gamma$. For this special case, we say that an undirected graph $G$ is  a \emph{robust $(\nu,\tau)$-expander} if the associated symmetric digraph is a robust $(\nu,\tau)$-out-expander.

\begin{proof}

    We may assume without loss of generality that $\tau \leq 1/2$ and that $\alpha \leq 1$. Let $t = \lfloor\log_{1-\tau/2}(\alpha/2) \rfloor,$ and let
    \[\gamma, \nu \ll \rho_0 \ll \rho_1 \ll \cdots \ll \rho_t \ll \delta, \tau, \alpha.\]
    Let $G_0 := G$ and $n_0 := n$.  
    
    For each $0 \leq i \leq t$, in turn, locate a vertex set $U_i \subseteq V(G_i)$ satisfying $\tau n_i/2 \leq |U_i| \leq (1-\tau/2)n_i$ and 
    \[\partial^+_{G_i}(U_i) + \partial^-_{G_i}(U_i) \leq \rho_i n^2,\]
    if such a set exists.
    In this case, define $G_{i+1} := G_i[U_i]$ and $n_{i+1} := |V(G_{i+1})|$.  By replacing $U_i$ by its complement in $G_i$ if necessary, we can assume that
    \begin{equation}\label{eq:average weight inheritance}
    \frac{1}{|U_i|}\sum_{v \in U_i} w(v) \leq \frac{1}{|V(G_i)|}\sum_{v \in V(G_i)}w(v).
    \end{equation}
    If such a set does not exist, then terminate this process, and define $s := i$. If, on the other hand, we manage to find a suitable set $U_i$ for every $0 \leq i \leq t$, then define $s := t+1$. 

    First, we observe that we cannot have $n_s \leq \alpha n/2$. Suppose to the contrary, and consider the minimum index $i$ for which $n_i \leq \alpha n/2$. Note that we also have $n_i \geq \tau n_{i-1}/2 > \tau \alpha n/4$ by the minimality of $i$, which means
    \[\partial_G^+(V(G_{i})) \leq \sum_{j=0}^{i-1} \partial^+_{G_j}(U_j) \leq \sum_{j=0}^{i-1} \rho_j n^2 \leq 2 \rho_{i-1} n^2 \leq \frac{\tau \alpha^2 n^2}{8} < \frac{\alpha n n_{i}}{2}\]
    since $\rho_0 \ll \cdots \ll \rho_{i-1} \ll \tau, \alpha$. But then
    \[\alpha n n_i \leq \sum_{v \in V(G_i)} \deg_G^+(v) = e(V(G_i)) + \partial^+_G(V(G_i)) < n_i^2 + \frac{\alpha n n_i}{2} \leq \alpha n n_i,\]
    which is clearly false. We conclude that $n_s > \alpha n/2$. In particular, this implies that we cannot have $s=t+1$, as $\alpha n/2 < n_s \leq (1-\tau/2)^s n$,
    so $s < \log_{1-\tau/2}(\alpha/2) < t+1$.

    Now by our choice of $s$, we must have been unable to locate a suitable set $U_i$ when we reached $i=s$. Therefore, we have for every $U \subseteq V(G_{s})$ with $\tau n_s/2 \leq |U| \leq (1-\tau/2) n_s$ that
    \[\partial_{G_s}^+(U) + \partial^-_{G_s}(U) \geq \rho_{s} n^2.\]
    We also have that
    \begin{align*}
        |\partial^+_{G_s}(U) - \partial^-_{G_s}(U)| &\leq
        |\partial_G^+(U) - \partial_G^-(U)| + \partial_G^+(V(G_s)) + \partial_G^-(V(G_s)) \\
        &\leq \left|\sum_{v \in U} \deg_G^+(v) - \deg_G^-(v)\right| + \sum_{j=0}^{s-1}\rho_j n^2 \\
        &\leq \gamma n^2 + 2\rho_{s-1}n^2 \leq \frac{\rho_s n^2}{2},
    \end{align*}
    since $\gamma \ll \rho_0 \ll \cdots \ll \rho_{s-1} \ll \rho_{s}$.
    Thus $\partial_{G_s}^+(U) \geq \rho_{s}n^2/4$, and $\partial_{G_s}^-(U) \geq \rho_s n^2/4$.

    We now obtain $G'$ from $G_s$ by deleting the set $Y$ of vertices $v \in V(G_{s})$ with 
    \[\deg^+_G(v,V(G) \setminus V(G_{s})) + \deg^-_G(v,V(G) \setminus V(G_{s})) \geq \sqrt{\rho_{s-1}} n.\] We claim that $G'$ satisfies \ref{findrobustexpanderdense:1}, \ref{findrobustexpanderdense:2}, and \ref{findrobustexpanderdense:3} with $\nu := \rho_s/16$. Note that
    \[|Y|\sqrt{\rho_{s-1}}n \leq \partial^+_G(V(G_s)) + \partial^-_G(V(G_s)) \leq \sum_{j=0}^{s-1}\rho_j n^2 \leq 2\rho_{s-1}n^2,\]
    so $|Y| \leq 2\sqrt{\rho_{s-1}}n \leq \alpha n/4 \leq n_s/2$ since $\rho_{s-1} \ll \alpha$. Thus every vertex $v \in V(G')$ has in-degree at least $\deg_G^-(v) - 3\sqrt{\rho_{s-1}}n \geq \deg_G^-(v) - \delta n$ and out-degree at least $\deg_G^+(v) - 3\sqrt{\rho_{s-1}}n \geq \deg_G^+(v) - \delta n$ since $\rho_{s-1} \ll \delta$. This verifies \ref{findrobustexpanderdense:2}. We also have 
    \[|V(G')| = n_s - |Y| \geq n_s - 2\sqrt{\rho_{s-1}}n \geq n_s/2\]
    since $n_s > \alpha n/2$ and $\rho_{s-1} \ll \alpha$.
    Consequently, if $U \subseteq V(G')$ with $\tau|V(G')| \leq |U| \leq (1-\tau)|V(G')|$, then $\tau n_s/2 \leq |U| \leq (1-\tau/2) n_s$, so 
    \[\partial^+_{G'}(U) \geq \partial^+_{G_s}(U) - |Y|n \geq \frac{\rho_s n^2}{4} - \sqrt{\rho_{s-1}}n^2 \geq \frac{\rho_s n^2}{8}\]
    since $\rho_{s-1} \ll \rho_s$. Then, by the definition of $RN^+_{\nu, G'}(U)$ and since $\nu \ll \rho_s$,
    \[2\nu |V(G')|^2 \leq \frac{\rho_s n^2}{8} \leq \partial^+_{G'}(U) \leq |RN^+_{\nu, G'}(U) \setminus U|\cdot |V(G')| + \nu|V(G')|^2,\]
    which gives $|RN^+_{\nu, G'}(U) \setminus U| \geq \nu |V(G')|$. That is, $G'$ is a robust $(\nu, \tau)$-out-expander, as claimed. A similar argument shows that $G'$ is a robust $(\nu, \tau)$-in-expander as well, so \ref{findrobustexpanderdense:1} is satisfied. To verify \ref{findrobustexpanderdense:3}, we note that \eqref{eq:average weight inheritance} implies by induction that 
    \[\frac{1}{n_s}\sum_{v \in V(G_s)}w(v) \leq \frac{1}{n}\sum_{v \in V(G)}w(v).\]
    Now because $|V(G')| \geq n_s/2$, we have
    \[\frac{1}{|V(G')|}\sum_{v \in V(G')}w(v) \leq \frac{2}{n_s}\sum_{v \in V(G_s)}w(v) \leq \frac{2}{n}\sum_{v \in V(G)}w(v).\]
    This verifies \ref{findrobustexpanderdense:3}, and the proof is complete.
\end{proof}


\subsection{Robust connectivity of robust expanders}\label{sec:connectivity}
The purpose of this section is to formalise the robust connectivity properties of properly edge-coloured dense digraphs whose underlying uncoloured digraph forms a robust expander. Our goal is to conclude that in such a digraph, there is a rainbow path connecting any two vertices $u$ and $v$ whose internal vertices and colours are restricted to some randomly sampled sets, and further restricted to avoid a potentially adversarially chosen set of deterministic vertices and colours. Avoiding given vertices and colours is quite useful, as we will invoke this property for several pairs of vertices $(u_i,v_i)$ consecutively to connect a rainbow path forest, so it is critical not to reuse vertices and colours saturated on a previous connection.

\begin{lemma}[Connecting lemma]\label{lem:connecting lemma}
    Let $n$ be a positive integer, and let $\nu, \tau, \alpha, p \leq 1$ be positive constants satisfying $\nu + \tau \leq \alpha$ and $p^3 \nu^2 n \geq 144\log n$. Define $\beta := p^3 \nu/100$. 
   
    Let $G$ be a properly edge-coloured directed graph on $n$ vertices. Suppose that $G$ is a robust $(\nu, \tau)$-out-expander,
    with $\delta^\pm(G) \geq \alpha n$. Let $V_0, C_0$ be independent $p$-random subsets of $V(G), C(G)$, respectively. Then with probability at least $1 - 5/n$, the following all hold:
   \stepcounter{propcounter}
   \begin{enumerate}[label = {{{\normalfont\textbf{\Alph{propcounter}\arabic{enumi}}}}}]
        \item $|V_0| \leq 2pn$; \label{connectprop:1}
        \item 
        every vertex $v \in V(G)$ satisfies 
        \[\deg^+(v, V(G) \setminus V_0; C(G) \setminus C_0) \geq (1-3p)\deg^+(v);\]
        \[\deg^-(v, V(G) \setminus V_0; C(G) \setminus C_0) \geq (1-3p)\deg^-(v);\]\label{connectprop:2}
        \item for any two distinct vertices $u,v \in V(G)$, and for any vertex subset $V_1 \subseteq V_0$ and colour subset $C_1 \subseteq C_0$, each of size at most $\beta n$, there exists a rainbow directed path of length at most $\nu^{-1} + 1$ from $u$ to $v$ in $G$ whose internal vertices are in $V_0 \setminus V_1$ and whose colours are in $C_0 \setminus C_1$.\label{connectprop:3}
    \end{enumerate}
\end{lemma}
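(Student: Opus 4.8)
\textbf{Properties \ref{connectprop:1} and \ref{connectprop:2}.} Both follow from Chernoff's inequality (\Cref{chernoff}) together with a union bound, the hypothesis $p^3\nu^2 n \ge 144\log n$ guaranteeing that the relevant means dominate $\log n$. For \ref{connectprop:1}, $|V_0|\sim\bin(n,p)$ and $pn \ge (144 n^2\log n)^{1/3}$, so the bound is immediate. For \ref{connectprop:2}, fix a vertex $v$ and a direction: since the colouring is proper, the out-edges at $v$ have pairwise distinct heads and pairwise distinct colours, so $\deg^+(v, V(G)\setminus V_0; C(G)\setminus C_0)$ is a sum of $\deg^+(v)$ independent $\mathrm{Bernoulli}((1-p)^2)$ variables, with mean $(1-p)^2\deg^+(v)$, which exceeds the target $(1-3p)\deg^+(v)$ by at least $p\deg^+(v)\ge p\alpha n$. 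As $(p\alpha n)^2/(2\deg^+(v)) \ge p^2\alpha^2 n/2 \ge p^3\nu^2 n/2 \ge 72\log n$ (using $\alpha\ge\nu$), \Cref{chernoff} gives failure probability at most $n^{-72}$ per vertex; summing over the $\le 2n$ vertex–direction pairs completes the proof.

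\textbf{Property \ref{connectprop:3}: the plan.} We condition first on a pseudorandom event $\mathcal E$ for the pair $(V_0,C_0)$: besides \ref{connectprop:1}--\ref{connectprop:2}, $\mathcal E$ requires that $\deg^\pm(w,V_0;C_0)\ge p^2\deg^\pm(w)/2$ for every vertex $w$, and that every member $A$ of a fixed polynomially-sized family of \emph{deterministic} subsets of $V(G)$ or $C(G)$ — the in- and out-neighbourhoods of single vertices, and, for each vertex $x$ and each $r\le \nu^{-1}+1$, the radius-$r$ forward- and backward-reachability balls of $x$ in $G$ — satisfies $|A\cap V_0|\ge p|A|/2$ whenever $|A|\ge\nu n/2$, and likewise for colours. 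Each of these is a single Chernoff estimate with failure probability $\exp(-\Omega(p\nu n))\le n^{-9}$, since $p\nu n\ge p^3\nu^2 n\ge 144\log n$; hence $\mathcal E$ holds with probability at least $1-5/n$.

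Now fix distinct $u,v$ and $V_1\subseteq V_0$, $C_1\subseteq C_0$ of size at most $\beta n$, and split the permissible colours $C_0\setminus C_1$ into two halves $C^+,C^-$. Grow a rainbow \emph{forward reachable set} $F_0^+\subseteq F_1^+\subseteq\cdots$, where $F_i^+$ consists of the endpoints of rainbow directed paths of length $i$ from $u$ with internal vertices in $V_0\setminus V_1$ and colours in $C^+$. The first step uses $\delta^+(G)\ge\alpha n$ and the degree part of $\mathcal E$ to make $|F_1^+|$ a positive fraction of $n$; each later step applies the robust out-expansion of $G$ — a property of $G$ alone, so the robust out-neighbourhoods involved are among the deterministic sets controlled by $\mathcal E$ — to the current frontier, then intersects with $V_0\setminus V_1$ and with the colours not yet used, of which $\Omega(p\nu n)$ remain, dwarfing the $\le\nu^{-1}$ used colours and the $\le\beta n=p^3\nu n/100$ vertices/colours forbidden by $V_1,C_1$. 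Symmetrically grow a rainbow backward reachable set $F_j^-$ towards $v$ using colours in $C^-$; since only out-expansion is assumed, growth here is forced by applying robust out-expansion to the complement of the current backward-reachable set (a failure to grow would contradict expansion). After $O(\nu^{-1})$ steps the frontiers $F_i^+,F_j^-$ are both large subsets of $V_0\setminus V_1$ with $i+j\le\nu^{-1}$; they are therefore joined by an edge of $G$ with a colour in $C_0\setminus C_1$ (again via expansion and $\mathcal E$), and concatenating the two half-paths across this edge gives a rainbow $u$--$v$ path of length at most $\nu^{-1}+1$ with internal vertices in $V_0\setminus V_1$ and colours in $C_0\setminus C_1$.

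\textbf{Main obstacle.} The delicate point is the circular dependence between the adaptively grown frontiers $F_i^\pm$ and the random sets $V_0,C_0$: one cannot directly union-bound the required concentration over all possible frontiers. The resolution is to make $\mathcal E$ refer only to the polynomially many deterministic sets that dominate the frontiers — legitimate precisely because $G$'s robust out-expansion is intrinsic to $G$ — and then show inductively that each $F_i^\pm$ captures a $\gtrsim p$-fraction of the corresponding deterministic reachability ball, with the adversarial $V_1,C_1$ absorbed for free because $|V_1|,|C_1|\le\beta n=p^3\nu n/100$ is negligible against the slack at each step. A secondary difficulty, and the reason the hypothesis reads $p^3\nu^2 n\ge 144\log n$ rather than ``$p,\nu$ constant and $n$ large'', is that the reservoir is only a $p$-fraction of $V(G)$, so the frontiers live in a comparatively small set; when $p$ is small one cannot force them to a constant fraction of $n$ and must instead use that $|F_i^+|\,|F_j^-|/|V_0|=\Omega(p^3\nu^2 n)\gg\log n$, which makes a common vertex (or joining edge) unavoidable with room to spare even after deleting $V_1$ and $C_1$. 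The remaining rainbow bookkeeping is routine, since only $O(\nu^{-1})$ colours are ever used and the two searches draw from disjoint colour pools.
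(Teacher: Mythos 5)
Your treatment of the first two properties is correct and matches the paper's (Chernoff plus a union bound; your observation that properness makes the edge-survival indicators at a fixed vertex independent is exactly the point). The gap is in the third property. Your plan grows random frontiers $F_i^{\pm}$ inside $V_0$ and ``applies the robust out-expansion of $G$ to the current frontier'', but robust $(\nu,\tau)$-out-expansion says nothing about sets of size below $\tau n$: the frontiers are (essentially) subsets of $V_0$, of size $O(pn)$, and the lemma places no lower bound on $p$ relative to $\tau$ (in one application $p=\eps/6$ while $\tau$ is only required to satisfy $\nu+\tau\le\alpha$, so $p<\tau$ is a live case). The same objection kills the backward step: to force growth of the backward-reachable set $B$ by expanding $B^{c}$ you need $|B^{c}|\le(1-\tau)n$, i.e.\ $|B|\ge\tau n$, which again fails. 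Your proposed fix --- conditioning only on deterministic reachability balls and arguing that each frontier captures a $p$-fraction of the corresponding ball --- does not repair this, because robust out-neighbourhoods are monotone increasing in the set: a lower bound on $|F_i^{+}|$ as a fraction of a deterministic ball gives no lower bound on $|RN^{+}_{\nu,G}(F_i^{+})|$. Finally, the birthday-paradox step (``$|F_i^{+}||F_j^{-}|/|V_0|\gg\log n$ makes a common vertex unavoidable'') is not a valid deduction for two adversarially placed subsets of $V_0$; they can be disjoint unless their sizes sum to more than $|V_0|$.

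The paper avoids all of this by never applying expansion to a random set. It builds the deterministic hierarchy $N_1(u):=N^+_G(u)$, $N_{i+1}(u):=N_i(u)\cup RN^+_{\nu,G}(N_i(u))$ --- these have size at least $\alpha n\ge\tau n$ from the start, so expansion applies and $V(G)$ is reached within $\nu^{-1}$ levels --- and conditions only on the polynomially many events that each $w\in RN^+_{\nu/2,G}(N_i(u)\setminus N_1(u))$ has at least $2(\beta n+\nu^{-1})+1$ in-neighbours in $(N_i(u)\setminus N_1(u))\cap V_0$ whose edge to $w$ has colour in $C_0$ (plus a stronger three-colour version at level $1$). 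Given that event, the path is extracted by a greedy backward walk from $v$ that strictly decreases the level at each step, with $V_1$, $C_1$, and the already-used vertices and colours absorbed by the $\beta n+\nu^{-1}$ slack. This is the restructuring your argument needs: expansion must act on the full deterministic neighbourhood hierarchy, and the randomness should only certify available in-neighbours of each fixed vertex at each fixed level.
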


Our intended application  of Lemma~\ref{lem:connecting lemma} is to set aside the randomly sampled sets $V_0$ and $C_0$ as a ``reservoir'', and the remaining coloured directed graph $G'$ will inherit the essential properties of $G$. Using the results of the previous section, we can find a large rainbow path forest in $G'$, and then we can use the reservoir to connect its components into a single rainbow path in $G$ (see \Cref{thm:main} and \Cref{lem:long path in robust expander}).

\par We remark that similar statements about the connectivity of robust expanders  have appeared frequently in the literature. Indeed, such statements without random sampling in the dense setting can be found in~\cite[Proposition 18]{gruslys2021cycle} and \cite{kuhn2015robust} for undirected uncoloured graphs, and with random sampling in sparser settings can be found in~\cite{erdos-gallai} for uncoloured graphs and in~\cite{rainbow-tomon} for coloured graphs. The presence of colours and the setting of directed graphs in our case makes certain aspects of the following argument notationally more technical but here is the gist of the argument. In an uncoloured robust out-expander with linear in and out degree, if one grows a BFS tree rooted at any fixed vertex, in a bounded number of steps we reach all the vertices, which  allows to connect any two vertices $u$ and $v$ with a short path. In the coloured setting, we have to keep track of the colours of the edges used on each subpath we obtain in the BFS, but using the proper edge-colouring of the digraph this is not an issue.


\begin{proof}[ of Lemma~\ref{lem:connecting lemma}]
    The properties \ref{connectprop:1} and \ref{connectprop:2} occur with high probability by Chernoff's inequality (\Cref{chernoff}). Indeed, the probability that \ref{connectprop:1} fails is at most
    \[\exp\left(-\frac{pn}{3}\right) \leq n^{-48}\]
    since $pn \geq p^3\nu^2 n \geq 144 \log n$.
    As for \ref{connectprop:2}, we have for every $v \in V(G)$ that each of $\deg^+(v) - \deg^+(v, V(G) \setminus V_0; C(G) \setminus C_0)$ and $\deg^-(v) - \deg^-(v,V_0; C(G) \setminus C_0)$ is a binomial random variable with expectation $(2p-p^2)\deg^+(v)$ and $(2p-p^2)\deg^-(v)$, respectively.
    Therefore, because $\delta^\pm(G) \geq \alpha n$, the probability that \ref{connectprop:2} fails is at most 
    \[2n \exp\left(-\frac{(p + p^2)^2\delta^\pm(G)}{2(2p-p^2) + (p + p^2)}\right) \leq 2n \exp\left(-\frac{p \alpha n}{5}\right) \leq 2n^{-139/5}\]
    since $p\alpha n \geq p^3 \nu^2 n \geq 144\log n$.

    In order to ensure \ref{connectprop:3}, we first need to introduce some notation.
    
    \par For a vertex $u \in V(G)$, define $N_1(u) := N_G^+(u)$, and for each $i \geq 1$, define $N_{i+1}(u) := N_i(u) \cup RN_{\nu, G}^+(N_i(u))$. Then $|N_1(u)| \geq \alpha n \geq \tau n$, so because $G$ is a robust $(\nu, \tau)$-out-expander,
    we have $|N_{i+1}(u)| \geq |N_i(u)| + \nu n$ as long as $|N_i(u)| \leq (1-\tau)n$. Thus there exists $i_0 \leq \nu^{-1}$ such that $|N_{i_0}(u)| \geq (1-\tau)n$. As any vertex in $V(G)$ has at least $\alpha n - \tau n \geq \nu n$ in-neighbours in $N_{i_0}(u)$, we have that $RN_{\nu, G}^+(N_{i_0}(u)) = V(G)$.

    Given $u,w \in V(G)$ with $w \in RN_{\nu/2, G}^+(N_1(u))$, let $Y_{u,w,1}$ be the number of in-neighbours $x$ of $w$ in $N_1(u)$ such that $x \in V_0$, and $C(u,x)$ and $C(x,w)$ are both in $C_0$. Similarly, if $w \in RN_{\nu/2, G}^+(N_i(u) \setminus N_{1}(u))$ for some $2 \leq i \leq i_0$, let $Y_{u,w,i}$ be the number of in-neighbours $x$ of $w$ in $N_i(u) \setminus N_1(u)$ such that $x \in V_0$, and $C(x,w) \in C_0$. We will ensure that 
    \begin{equation}\label{eq:many good neighbours 1}
        Y_{u,w,1} \geq 3(\beta n + \nu^{-1}) + 1
    \end{equation}
    whenever $w \in RN_{\nu/2, G}^+(N_1(u))$, and that
    \begin{equation}\label{eq:many good neighbours i}
        Y_{u,w,i} \geq 2(\beta n + \nu^{-1}) + 1
    \end{equation}
    whenever $w \in RN_{\nu/2, G}^+(N_i(u) \setminus N_{1}(u))$.

    For $2 \leq i \leq i_0$ and $w \in RN_{\nu/2, G}^+(N_i(u) \setminus N_{1}(u))$, $Y_{u,w,i}$ is a binomial random variable with expectation at least 
    \[\frac{p^2 \nu n}{2} \geq \frac{p^3 \nu n}{4} + \frac{p^3 \nu n}{4} \geq 25\beta n + 36\nu^{-1} \geq 2(2(\beta n + \nu^{-1}) + 1),\]
    since $p^3 \nu^2 n \geq 144 \log n \geq 144.$
    Thus, the probability that some $Y_{u,w,i} < 2(\beta n + \nu^{-1}) + 1$ for $i \geq 2$ is at most 
    \[\nu^{-1} n^2 \exp\left(-\frac{p^2 \nu n}{16}\right) \leq n^{-6}\]
    by Chernoff's inequality (\Cref{chernoff}) and a union bound. Here, we used that $\nu n \geq p^3 \nu^2 n \geq 144 \log n \geq 1$, so $\nu^{-1} \leq n$.

    If $w \in RN_{\nu/2, G}^+(N_1(u))$, then we can greedily obtain a set $W_{u,w} \subseteq N_G^-(w) \cap N_1(u)$ of size at least $\nu n/6$ such that all $2|W_{u,w}|$ edges of the form $(u,x)$ or $(x,w)$ for $x \in W_{u,w}$ have distinct colours. 
    Now $Y_{u,w,1}$ is at least the number of vertices $x \in W_{u,w} \cap V_0$ such that $C(u,x)$ and $C(x,w)$ are both in $C_0$, which is a binomial random variable with expectation at least 
    \[\frac{p^3\nu n}{6} = \frac{p^3\nu n}{12} + \frac{p^3\nu n}{12} \geq \frac{25 \beta n}{3} + 12 \nu^{-1} \geq 2(3(\beta n + \nu^{-1}) + 1).\] Thus the probability that some $Y_{u,w,1} < 3(\beta n + \nu^{-1}) + 1$ is at most
    \[n^2 \exp\left(-\frac{p^3\nu n}{48}\right) \leq n^{-1}.\]

    We now fix an outcome $V_0$ and $C_0$ for which  \ref{connectprop:1}, \ref{connectprop:2},  (\ref{eq:many good neighbours 1}), and (\ref{eq:many good neighbours i}) all hold (for any $u,w,i$ in the case of (\ref{eq:many good neighbours 1}) and (\ref{eq:many good neighbours i})). We will now show that whenever these conditions hold, \ref{connectprop:3} is also satisfied. Since we have shown that these conditions hold with probability at least $1-n^{-48}-2n^{-139/5}-n^{-6}-n^{-1} \ge 1-5/n$, this will complete the proof.

     Fix two distinct vertices $u,v \in V(G)$, a vertex subset $V_1 \subseteq V_0$, and a colour subset $C_1 \subseteq C_0$, each of size at most $\beta n$. Our goal is to find a rainbow directed path $P$ from $u$ to $v$ of length at most $\nu^{-1} + 1$, with $V(P) \setminus \{u,v\} \subseteq V_0 \setminus V_1$ and $C(P) \subseteq C_0 \setminus C_1$.

    \begin{claim*}
        There exist a vertex $w \in RN^+_{\nu/2, G}(N_1(u))$ and a rainbow directed path $Q$ from $w$ to $v$ of length at most $\nu^{-1} - 1$, with $V(Q) \setminus \{v\} \subseteq V_0 \setminus V_1$ and $C(Q) \subseteq C_0 \setminus C_1$.
    \end{claim*}
    \begin{cla_proof}
    Recall that $v \in RN_{\nu, G}^+(N_{i_0}(u))=V(G)$. Define $v_0 := v$. Let $k \geq 0$ be maximal so that there exists some rainbow directed path $P_k := (v_k,v_{k-1},\ldots ,v_0)$ such that for indices $1 \leq i_k < i_{k-1} < \cdots < i_0$, we have 
    \begin{itemize}
        \item $v_j \in RN_{\nu,G}^+(N_{i_j}(u))$ for $0 \leq j \leq k$;
        \item $V(P_k) \setminus \{v_0\} \subseteq V_0 \setminus V_1$;
        \item $C(P_k) \subseteq C_0 \setminus C_1$.
    \end{itemize}
    The trivial path at $v_0$ satisfies all of these conditions with $k=0$, so $P_k$ is well-defined.
    We will show that the claim is satisfied with $w := v_k$ and $Q := P_k$. First note that $i_1 > \cdots > i_k$ are $k$ distinct integers in $[1,i_0-1]$, so the length of $Q$ is $k \leq i_0 - 1 \leq \nu^{-1} - 1$.
    It remains to show that $w$ has at least $\nu n/2$ in-neighbours in $N_1(u)$.
    
    Suppose not. Then since $w = v_k \in RN^+_{\nu, G}(N_{i_k}(u))$, we must have $i_k \geq 2$ and $v_k \in RN_{\nu/2,G}^+(N_{i_k}(u) \setminus N_1(u))$. By (\ref{eq:many good neighbours i}), $v_k$ has at least $2(\beta n + \nu^{-1}) + 1$ in-neighbours $x \in N_{i_k}(u) \setminus N_1(u)$ such that $x \in V_0$ and $C(x,w) \in C_0$. Of these, at most $\beta n$ are in $V_1$, at most $\beta n$ have $C(x,w) \in C_1$, at most $k+1 \leq \nu^{-1}$ are already in $V(P_k)$, and at most $k \leq \nu^{-1}$ have $C(x,w) \in C(P_k)$. This leaves at least one choice of an in-neighbour $v_{k+1} \in N_{i_k}(u) \setminus N_1(u)$ for which $P_{k+1} := (v_{k+1}, \ldots, v_0)$ is a rainbow directed path with $V(P_{k+1}) \setminus \{v_0\} \subseteq V_0 \setminus V_1$ and $C(P_{k+1}) \subseteq C_0 \setminus C_1$. By definition of $N_{i_k}(u) \setminus N_1(u)$, we have that $v_{k+1} \in RN^+_{\nu, G}(N_{i_{k+1}}(u))$ for some $1 \leq i_{k+1} < i_k$, contradicting the maximality of $k$.
    
    We conclude that $w \in RN_{\nu/2,G}^+(N_1(u))$. This proves the claim. 
    \end{cla_proof}

    Let $w$ and $Q$ be as in the claim. If $u \in V(Q)$, then we can take $P$ to be the subpath of $Q$ from $u$ to $v$. If not, then by (\ref{eq:many good neighbours 1}), $w$ has at least $3(\beta n + \nu^{-1}) + 1$ in-neighbours $x \in N_1(u)$ such that $x \in V_0$, and $C(u,x), C(x,w) \in C_0$. Of these, at most $\beta n$ are in $V_1$, at most $2\beta n$ have $C(u,x) \in C_1$ or $C(x,w) \in C_1$, at most $|V(Q)| \leq \nu^{-1}$ are already in $V(Q)$, and at most $2|C(Q)| \leq 2\nu^{-1}$ have $C(u,x) \in C(Q)$ or $C(x,w) \in C(Q)$. This leaves at least one choice for $x$ in which $P := (u,x,Q)$ is a rainbow directed path from $u$ to $v$ with $V(P) \setminus \{u,v\} \subseteq V_0 \setminus V_1$ and $C(P) \subseteq C_0 \setminus C_1$. Because the length of $P$ is at most $\nu^{-1} + 1$, the proof is complete.
\end{proof}

\section{Long rainbow paths in dense digraphs}\label{sec:dense}
In this section we prove Theorem~\ref{thm:regulardigraph-intro} and \Cref{thm:summary}(c). The rough idea of both proofs is as follows. Dense regular digraphs (and more generally Eulerian digraphs) without good expansion properties have sparse cuts, so we can always zoom into an expanding subgraph with essentially the same degrees as the original digraph (as in \Cref{lem:find robust expander}). On the other hand, dense digraphs \textit{with} good expansion properties (including all sufficiently dense Cayley graphs on $\mathbb Z_p$ with $p$ prime, as we will see) are well-connected, so we can build a rainbow path by first finding a large rainbow path forest with few components via \Cref{lem:rainbow path forest}, and then connecting the components together via short rainbow paths via \Cref{lem:connecting lemma}. 

We begin with the case where our dense digraph already has robust expansion properties.

\begin{lemma}\label{lem:long path in robust expander}
    For any $\eps > 0$, there exists a positive integer $n_0$ such that the following holds for all $n \geq n_0$. Let $\nu, \tau, \alpha > 0$ satisfying $\nu + \tau \leq \alpha \leq 1$ and $\nu \geq n^{-1/2}\log n$. Let $G$ be a properly edge-coloured directed graph on $n$ vertices. Suppose that $G$ is a robust $(\nu, \tau)$-out-expander with $\delta^\pm(G) \geq \alpha n$. Then $G$ contains a rainbow directed path of length at least $\left(1 - \eps\right)\delta^{\pm}(G)$. 
\end{lemma}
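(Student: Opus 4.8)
The plan is to combine the two main tools developed so far: the rainbow path forest lemma (\Cref{lem:rainbow path forest}) to produce a large rainbow subgraph whose components are few in number, and the connecting lemma (\Cref{lem:connecting lemma}) to stitch those components together into a single rainbow path, using a randomly reserved set of vertices and colours as a reservoir. Concretely, fix a small auxiliary constant $p$ (polynomially small in $1/\eps$, say $p \ll \eps$) and apply \Cref{lem:connecting lemma} with this $p$: this requires checking the hypothesis $p^3\nu^2 n \geq 144 \log n$, which is where the assumption $\nu \geq n^{-1/2}\log n$ is used (for $n$ large in terms of $\eps$). This gives us, with positive probability, a vertex set $V_0$ of size at most $2pn$ and a colour set $C_0$ such that (i) deleting $V_0$ and $C_0$ from $G$ leaves every degree reduced by at most a $3p$ factor, and (ii) any two vertices can be joined by a short rainbow path (length $\leq \nu^{-1}+1$) through the reservoir $V_0, C_0$, even after a bounded number ($\leq \beta n$) of reservoir vertices/colours have already been used up.

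Next I would work inside $G' := G[V(G) \setminus V_0; C(G) \setminus C_0]$. By property \ref{connectprop:2} this digraph still has minimum semi-degree at least $(1-3p)\delta^\pm(G) \geq (1-3p)\alpha n$, which in particular exceeds the threshold $9(\eps')^{-3}$ required by \Cref{lem:rainbow path forest} for a suitable $\eps' = \Theta(\eps)$. Applying that lemma to $G'$ yields a rainbow path forest $\mathcal F$ with at most $9(\eps')^{-2} = O(\eps^{-2})$ components and at least $(1-\eps')\delta^-(G') \geq (1-\eps')(1-3p)\delta^\pm(G)$ edges; choosing $p$ and $\eps'$ appropriately, this is at least $(1-\eps/2)\delta^\pm(G)$ edges. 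Crucially, $\mathcal F$ uses no vertex of $V_0$ and no colour of $C_0$.

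Finally I would connect the components of $\mathcal F$ one at a time. Order the components as $P_1, \dots, P_k$ with $k = O(\eps^{-2})$; iteratively, if we have already merged $P_1, \dots, P_j$ into a single rainbow path $R_j$ ending at some vertex $a$, and $P_{j+1}$ starts at some vertex $b$, invoke property \ref{connectprop:3} to find a short rainbow path from $a$ to $b$ whose internal vertices lie in $V_0$ and colours in $C_0$, avoiding the (at most $k(\nu^{-1}+1) = O(\eps^{-2}\nu^{-1})$) reservoir vertices and colours used in previous connections. One must check that this count stays below $\beta n = p^3\nu n/100$: since $k = O(\eps^{-2})$ and each connection uses $O(\nu^{-1})$ reservoir elements, the total is $O(\eps^{-2}\nu^{-1})$, and $\beta n = \Omega(p^3 \nu n) = \Omega(\log^2 n / \nu)$ via the hypothesis, so this holds comfortably for large $n$ (here one may also need $\nu$ not too small relative to $\eps$, but the bound $\nu^{-1} = O(n^{1/2})$ together with $\beta n = \Omega(n^{1/2}\log n \cdot \text{something})$ suffices — this is the one spot to be careful with the inequalities). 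After all $k-1$ connections we obtain a single rainbow path $R$ containing all of $E(\mathcal F)$, hence of length at least $(1-\eps/2)\delta^\pm(G) - (\text{lower order}) \geq (1-\eps)\delta^\pm(G)$.

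The main obstacle I anticipate is not any single deep step but the bookkeeping of the constant hierarchy: one must choose $p$ small enough that the degree loss $3p\delta^\pm(G)$ and the total number of reservoir elements consumed by connections are both negligible compared to $\eps \delta^\pm(G)$, while simultaneously keeping $p$ large enough (polynomially in $n$, via $p^3\nu^2 n \gtrsim \log n$) for \Cref{lem:connecting lemma} to apply given only $\nu \geq n^{-1/2}\log n$. Verifying that the number of reservoir vertices and colours used across all $O(\eps^{-2})$ connections stays under the threshold $\beta n$ — given that each connecting path has length up to $\nu^{-1}+1$ and $\nu$ may be as small as $\approx n^{-1/2}\log n$ — is the delicate inequality to get right, but it works out because $\beta n = p^3 \nu n/100$ grows faster than $\eps^{-2}\nu^{-1}$ for the relevant range of parameters.
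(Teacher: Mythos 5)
Your proposal is correct and follows essentially the same route as the paper's proof: reserve random sets $V_0, C_0$ via \Cref{lem:connecting lemma} (with $p = \Theta(\eps)$, using $\nu \geq n^{-1/2}\log n$ to verify $p^3\nu^2 n \geq 144\log n$), extract a rainbow path forest with $O(\eps^{-2})$ components from $G[V(G)\setminus V_0; C(G)\setminus C_0]$ via \Cref{lem:rainbow path forest}, and stitch the components together through the reservoir, with the same final check that the $O(\eps^{-2}\nu^{-1})$ consumed reservoir elements stay below $\beta n = p^3\nu n/100$.
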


\begin{proof}
We may assume $\eps \leq 1$. Let $d := \delta^{\pm}(G)$. Set $p := \eps/6$ and $\beta := p^3 \nu/100$. Since $\nu \geq n^{-1/2}\log n$ and $1/n \ll \eps$, we have
\[p^3 \nu^2 n \geq \frac{\eps^3 \log^2 n}{216} \geq 144\log n,\]
so by \Cref{lem:connecting lemma}, there exist sets $V_0 \subseteq V(G)$ and $C_0 \subseteq C(G)$ satisfying \ref{connectprop:1}--\ref{connectprop:3}. Let $G' := G[V(G) \setminus V_0; C(G) \setminus C_0]$. By \ref{connectprop:2}, the minimum in-degree of $G'$ is at least \[(1-3p)d \geq (1-\eps/2)\alpha n \geq \frac{\nu n}{2} \geq \frac{\sqrt{n} \log n}{2} \geq 9(\eps/2)^{-3}\]
since $\alpha \geq \nu \geq n^{-1/2}\log n$, and $1/n \ll \eps \leq 1$. Therefore, by \Cref{lem:rainbow path forest} applied to $G'$ with $\eps/2$ in place of $\eps$, $G'$ has a rainbow path forest $\mathcal P$ with at most $36\eps^{-2}$ components and at least $(1-\eps/2)(1-3p)d \geq (1-\eps) d$ edges.

Let $P_1, \ldots, P_m$ be the components of $\mathcal P$, where $m \leq 36\eps^{-2}$. For each $1 \leq i \leq m$, let $u_i$ denote the vertex of $P_i$ with in-degree $0$, and let $w_i$ denote the vertex of $P_i$ with out-degree $0$. Now for each $1 \leq i \leq m-1$, in turn, find a path $Q_i$ from $w_i$ to $u_{i+1}$ of length at most $\nu^{-1} + 1$ whose internal vertices are in $V_0 \setminus \bigcup_{j < i} V(Q_j)$ and whose colours are in $C_0 \setminus \bigcup_{j < i} C(Q_j)$. Such paths exist by \ref{connectprop:3}, as for each $i$, the sizes of $\bigcup_{j < i} V(Q_j)$ and $\bigcup_{j < i} C(Q_j)$ never exceed 
\[m\left(\nu^{-1} + 2\right) \leq 36\eps^{-2}\left(\frac{\sqrt n}{\log n} + 2\right) \leq \frac{\eps^3 \sqrt n \log n}{21600}\leq \frac{p^3 \nu n}{100} = \beta n\]
since $\nu \geq n^{-1/2} \log n$ and $1/n \ll \eps$. Now $\bigcup_{i=1}^m P_i \cup \bigcup_{i=1}^{m-1} Q_i$ is a rainbow path in $G$ of length at least $(1-\eps)d$, as required. \end{proof}

\subsection{Dense digraphs}

We now prove Theorem~\ref{thm:regulardigraph-intro} in the following stronger form. Here we relax the condition that the digraph $G$ is $d$-regular with $d = \Omega(n)$ to the condition that $G$ is almost-Eulerian (i.e., each vertex has in-degree within $o(n)$ of its out-degree) with $\delta^\pm(G) = \Omega(n)$.

\begin{theorem}\label{theorem:rainbowdirectedpathindensecase}
    For any $\eps, \alpha >0$, there exist $n_0 \in \mathbb N$ and $\gamma > 0$ such that the following holds for all $n \geq n_0$. Let $G$ be a properly edge-coloured directed graph on $n$ vertices with $d := \delta^\pm(G) \geq \alpha n$. Suppose that every vertex $v \in V(G)$ satisfies $|\deg^+(v) - \deg^-(v)| \leq \gamma n$. Then $G$ has a rainbow directed path of length at least $(1-\eps)d$.
\end{theorem}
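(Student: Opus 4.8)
The plan is to combine the ``find a robust expander'' lemma (\Cref{lem:find robust expander}) with the ``long rainbow path in a robust expander'' lemma (\Cref{lem:long path in robust expander}). First I would introduce a hierarchy of constants: given $\eps, \alpha > 0$, pick $\tau$ with $\tau \ll \eps, \alpha$, then apply \Cref{lem:find robust expander} with parameters $\delta := \eps\alpha/4$ (say), this $\tau$, and $\alpha$ to obtain $\gamma, \nu > 0$; we also shrink $\nu$ if necessary so that $\nu + \tau \leq \alpha$. We may also assume $n_0$ is large enough that $\nu \geq n^{-1/2}\log n$ holds for all $n \geq n_0$ (this is fine since $\nu$ is a constant). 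This $\gamma$ and a suitable $n_0$ (taking the max of the two lemmas' requirements) will be the output constants.

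The execution is then short. Apply \Cref{lem:find robust expander} to $G$ with the trivial weight function (e.g.\ $w \equiv 0$, so that \ref{findrobustexpanderdense:3} is vacuous — we do not need the weight conclusion here). The hypotheses $\delta^\pm(G) \geq \alpha n$ and $|\deg^+(v) - \deg^-(v)| \leq \gamma n$ are exactly what the lemma requires, so we obtain a nonempty induced subgraph $G'$ of $G$ on $n' := |V(G')|$ vertices that is both a robust $(\nu,\tau)$-out-expander and a robust $(\nu,\tau)$-in-expander, with every vertex satisfying $\deg_{G'}^\pm(v) \geq \deg_G^\pm(v) - \delta n \geq \alpha n - \delta n \geq (\alpha - \delta)n$. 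Since $G'$ is an induced subgraph, $n' \leq n$, so $\deg^\pm_{G'}(v) \geq (\alpha-\delta)n \geq (\alpha - \delta)n'$; writing $\alpha' := \alpha - \delta$ we have $\delta^\pm(G') \geq \alpha' n'$ and, shrinking $\nu$ at the outset so that also $\nu + \tau \leq \alpha'$, the digraph $G'$ meets the hypotheses of \Cref{lem:long path in robust expander} (with $\alpha'$ in place of $\alpha$; note $\nu \geq n^{-1/2}\log n \geq (n')^{-1/2}\log n' \cdot (n'/n)^{1/2}\cdot(\log n/\log n')^{-1}$ — more simply, just demand $n_0$ large enough and $\nu$ a fixed constant so the bound $\nu \geq (n')^{-1/2}\log n'$ is automatic once $n' \geq \alpha n / 2$, which follows from $\delta^\pm(G') \geq \alpha' n'$... actually $n' \geq \delta^+(G') + 1 \geq \alpha' n'$ is vacuous, so instead bound $n'$ from below by noting $n' > \delta^+(G') \geq (\alpha-\delta)n$). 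Apply \Cref{lem:long path in robust expander} to $G'$ with parameter $\eps/2$ (and $n_0$ enlarged accordingly) to get a rainbow directed path in $G'$, hence in $G$, of length at least $(1 - \eps/2)\delta^\pm(G') \geq (1-\eps/2)(\alpha-\delta)n \geq (1-\eps/2)(1 - \delta/\alpha)d \geq (1-\eps)d$, using $\delta = \eps\alpha/4$ and $d \leq n$.

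The only mildly delicate point — and the step I would be most careful about — is bookkeeping the parameter $\nu$ relative to the \emph{reduced} vertex count $n'$: \Cref{lem:long path in robust expander} needs $\nu \geq (n')^{-1/2}\log n'$, whereas we only know $\nu \geq n^{-1/2}\log n$ a priori. Since $\nu$ is a constant depending only on $\delta,\tau,\alpha$ and $n' > (\alpha-\delta)n$ grows linearly in $n$, this holds for all sufficiently large $n$; so the fix is simply to fold ``$n \geq n_0$ large enough that $\nu \geq ((\alpha/2)n)^{-1/2}\log((\alpha/2)n)$'' into the choice of $n_0$. Everything else is a direct concatenation of the two black-box lemmas, so there is no genuine obstacle beyond this constant-chasing.
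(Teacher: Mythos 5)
Your proposal is correct and follows essentially the same route as the paper: apply \Cref{lem:find robust expander} (the weight function is indeed not needed there) and then \Cref{lem:long path in robust expander} with $\eps/2$, handling the constant $\nu$ relative to $|V(G')|$ exactly as the paper does via $|V(G')| \geq \delta^\pm(G') = \Omega(n)$. The only blemish is the step $(1-\eps/2)(\alpha-\delta)n \geq (1-\eps/2)(1-\delta/\alpha)d$, whose middle inequality points the wrong way since $\alpha n \leq d$; the intended (and correct) bound follows directly from $\delta^\pm(G') \geq d - \delta n \geq (1-\delta/\alpha)d$.
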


\begin{proof}
 We may assume without loss of generality that $\eps \leq 1$. Let $\delta := \eps \alpha/2$, and let $\tau, \nu, \gamma > 0$ be such that $1/n \ll \gamma, \nu \ll \tau \ll \eps, \alpha$. Let $G'$ be as in \Cref{lem:find robust expander}, satisfying \ref{findrobustexpanderdense:1} and \ref{findrobustexpanderdense:2} (we don't need \ref{findrobustexpanderdense:3} for this application).
    By \ref{findrobustexpanderdense:2}, we have $\delta^\pm(G') \geq d - \eps \alpha n/2 \geq (1-\eps/2)d \geq \alpha n/2$.
    We also have $\nu \geq |V(G')|^{-1/2}\log|V(G')|$ and $\nu + \tau \leq (1-\eps/2)\alpha$ since $|V(G')| \geq \delta^{\pm}(G') \geq \alpha n/2$ and $1/n \ll \nu, \tau \ll \alpha, \eps \leq 1$. Now by \Cref{lem:long path in robust expander} with $\eps/2$, $|V(G')|$, $(1 - \eps/2)\alpha$, $G'$, in place of $\eps$, $n$, $\alpha$, $G$, respectively, $G'$ contains a rainbow directed path of length at least
    \[\left(1 - \eps/2\right)\delta^\pm(G') \geq (1 - \eps) d.\]
    This completes the proof.\end{proof}

\textbf{Remark.} Essentially the same proof also guarantees the existence of a directed cycle of length $d-o(d)$ in both \Cref{lem:long path in robust expander} and \Cref{theorem:rainbowdirectedpathindensecase}. Moreover, our proof of \Cref{theorem:rainbowdirectedpathindensecase} can be optimised so that the conclusion holds whenever $\delta^\pm(G) \geq 100n \log \log n/\log n$, but to keep our computations neat, we prove this slightly weaker version assuming $\delta^\pm(G) = \Omega(n)$.

\subsection{Graham's original problem for \texorpdfstring{$\mathbb{Z}_p$}{Zp}}
\label{sec:Z_p}

In this section, we prove \Cref{thm:summary}(c) as an immediate consequence of \Cref{lem:long path in robust expander}. We will use a classical theorem of Pollard from 1970 \cite{Pollard} generalising the classical Cauchy-Davenport inequality to show that any dense Cayley graph on $\mathbb{Z}_n$ for prime $n$ is a robust out-expander\footnote{Throughout this section, we denote the (prime) order of the group by $n$ instead of $p$ in order to keep consistency with the rest of the paper.}. To state it, we first require some notation. Given two subsets $A,B$ of an Abelian group $\Gamma$ and an element $x\in \Gamma$, we denote by $r_{A,B}(x)$ the number of $(a,b)\in A\times B$ such that $x=a+b$. Let us define the \emph{$t$-representable sum} of $A$ and $B$ as $$A+_t B:=\{x \in \Gamma \mid r_{A,B}(x) \ge t\}.$$ 
We are now ready to state Pollard's theorem.

\begin{theorem}\label{thm:pollard}
    Let $n$ be a prime and $A,B$ be nonempty subsets of $\mathbb{Z}_n$. Then for any $t \le \min\{|A|,|B|\}$ we have
    $$\sum_{i=1}^{t} |A+_iB| \ge t \cdot \min \{n,|A|+|B|-t\}.$$ 
\end{theorem}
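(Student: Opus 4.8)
The plan is as follows. Since Theorem~\ref{thm:pollard} is the classical inequality of Pollard~\cite{Pollard}, one option is simply to invoke it; but here is the argument I would reconstruct. Write $r(x) := r_{A,B}(x)$, and note the identity
\[\sum_{i=1}^{t} |A +_i B| = \sum_{i=1}^t \bigl|\{x \in \mathbb{Z}_n : r(x) \ge i\}\bigr| = \sum_{x \in \mathbb{Z}_n} \min\{t, r(x)\},\]
so the statement is equivalent to $\sum_x \min\{t, r(x)\} \ge t \cdot \min\{n, |A|+|B|-t\}$. Two degenerate cases are immediate. If $|A| + |B| - t \ge n$, then for every $x$ we have $r(x) = |A \cap (x - B)| \ge |A| + |B| - n \ge t$, so the left-hand side is exactly $tn$ and we are done. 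At the other extreme, if $t = \min\{|A|,|B|\}$, then $r(x) \le t$ for all $x$, so $\sum_x \min\{t, r(x)\} = \sum_x r(x) = |A||B| = t\max\{|A|,|B|\} = t\min\{n, |A|+|B|-t\}$ (using $\max\{|A|,|B|\} \le n$), with equality. Finally the case $t = 1$ is precisely the Cauchy--Davenport theorem. Hence we may assume $1 < t < \min\{|A|,|B|\}$ and $|A| + |B| \le n + t - 1$.

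For the main range I would run the standard \textbf{Dyson ($e$-)transform} induction used to prove Cauchy--Davenport, but now tracking the truncated representation function rather than merely the support of $r$. For $e \in \mathbb{Z}_n$, set $A_e := A \cup (B + e)$ and $B_e := B \cap (A - e)$; then $|A_e| + |B_e| = |A| + |B|$, so the target $t\min\{n,|A|+|B|-t\}$ is unchanged. The crucial point is a \emph{monotonicity lemma}: $\sum_x \min\{t, r_{A_e,B_e}(x)\} \le \sum_x \min\{t, r_{A,B}(x)\}$ for every $e$. Granting this, induct on $|B|$: if some $e$ produces a pair with $t \le |B_e| < |B|$, apply the induction hypothesis to $(A_e, B_e)$ and combine with the monotonicity lemma; otherwise $(A,B)$ is \emph{stable}, meaning for every $e$ we have $B_e = B$ or $|B_e| < t$. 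Because $n$ is prime, stability is very restrictive (it forces $B$, up to translation, to sit inside $A$ along every relevant shift, and one quickly reduces to a set of size $\le t$ or to highly structured $A,B$ such as arithmetic progressions), and in each such configuration the bound $\sum_x \min\{t, r(x)\} \ge t(|A|+|B|-t)$ is verified directly.

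The step I expect to be the main obstacle is the monotonicity lemma for the transform. In the plain Cauchy--Davenport argument one only needs $A_e + B_e \subseteq A + B$, i.e.\ that the \emph{support} of $r$ can only shrink; here the truncation $\min\{t,\cdot\}$ forces one to control how mass of $r$ is redistributed across $\mathbb{Z}_n$, and a naive count only yields $r_{A_e,B_e}(x) \le 2\,r_{A,B}(x)$, which is insufficient, so a careful accounting of the representations created and destroyed by the transform is the genuinely new ingredient beyond Cauchy--Davenport. A secondary nuisance is that the transform can drop $\min\{|A_e|,|B_e|\}$ below $t$, so the induction does not always apply and the stable/base configurations must be dispatched by hand; by contrast, the interaction with the modulus (the $\min\{n,\cdot\}$ term) is mild, being inherited directly from Cauchy--Davenport.
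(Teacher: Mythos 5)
The paper does not prove Theorem~\ref{thm:pollard} at all: it is Pollard's classical theorem and is simply imported by citation from \cite{Pollard}. Your opening observation that one may just invoke it is therefore exactly what the paper does, and for the paper's purposes nothing more is needed.

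Judged as a standalone proof, your reconstruction has a correct skeleton (the rewriting as $\sum_x \min\{t,r(x)\}$ and all three degenerate cases check out), but the two places you flag as difficult are not equally problematic. The monotonicity lemma, which you call the main obstacle, is in fact true in the stronger pointwise form $r_{A_e,B_e}(x)\le r_{A,B}(x)$ and follows from a short injection: send a representation $(a',b')$ of $x$ with $a'\in A$ to itself, and one with $a'\in(B+e)\setminus A$ to $(b'+e,\,a'-e)\in A\times B$; a collision between the two types would force $a'=b_1+e$ for some $b_1\in B_e\subseteq A-e$, hence $a'\in A$, a contradiction. So that step is a non-issue (and your worry about a factor of $2$ is unfounded). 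The genuine gap is the terminal ``stable'' case. Stability constrains the quantities $|A\cap(B+e)|$, whereas the target quantity is $r(x)=|A\cap(x-B)|$, which involves translates of $-B$ rather than of $B$, so stability does not transfer directly into information about $r$. Worse, the set of shifts $e$ with $B+e\subseteq A$ may be empty under your stability hypothesis (all translate intersections could have size $<t$), in which case no structural conclusion such as ``$A,B$ are progressions'' follows, and I do not see how the bound is ``verified directly'' there. That endgame is where the real content of Pollard's argument lies (his original proof handles it by a different induction using Cauchy--Davenport rather than by classifying stable pairs), so the proposal as written is an outline with an unproved key case rather than a complete proof --- though, again, this has no bearing on the paper, which only cites the result.
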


We now prove \Cref{thm:summary}(c). We restate the theorem in terms of rainbow paths in Cayley graphs.

\begin{theorem}\label{thm:Zp}
    For any $\eps > 0$, there exists $n_0$ such that the following holds for all primes $n \geq n_0$. Let $S \subseteq \mathbb{Z}_n \setminus \{0\}$ of size $|S| \ge 4n^{3/4}\sqrt{\log n}$. Then $\mathrm{Cay}(\mathbb{Z}_n,S)$ contains a rainbow path of length $(1-\eps)|S|$.
\end{theorem}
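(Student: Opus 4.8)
The plan is to deduce this from \Cref{lem:long path in robust expander} applied to $G := \mathrm{Cay}(\mathbb{Z}_n, S)$, which is a $d$-regular digraph on $n$ vertices with $d = |S| \geq 4n^{3/4}\sqrt{\log n}$. To invoke that lemma, I must exhibit parameters $\nu, \tau, \alpha$ with $\nu + \tau \leq \alpha \leq 1$, with $\nu \geq n^{-1/2}\log n$, and with $\delta^\pm(G) = d \geq \alpha n$, such that $G$ is a robust $(\nu, \tau)$-out-expander. The natural choice is $\alpha := d/n$ (which is $\Theta(n^{-1/4}\sqrt{\log n})$, so tiny, but nonzero) and $\nu$ of order roughly $d/n$ as well, say $\nu := d/(4n)$; then $\nu \geq n^{-1/2}\log n$ follows from the hypothesis $d \geq 4n^{3/4}\sqrt{\log n}$, and I will take $\tau$ to be a small multiple of $\nu$ so that $\nu + \tau \leq \alpha$ is satisfied. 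The conclusion of \Cref{lem:long path in robust expander} then gives a rainbow path of length at least $(1-\eps)d = (1-\eps)|S|$, which is exactly what is claimed.

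The crux is therefore to verify the robust expansion of $\mathrm{Cay}(\mathbb{Z}_n, S)$, and this is where Pollard's theorem (\Cref{thm:pollard}) enters. For a set $U \subseteq \mathbb{Z}_n$, the out-neighbourhood structure is governed by the sumset $U + S$: a vertex $v$ lies in $RN^+_{\nu, G}(U)$ precisely when $|N^-(v) \cap U| \geq \nu n$, i.e. when the number of pairs $(u, s) \in U \times S$ with $u + s = v$ is at least $\nu n$, i.e. when $v \in U +_{\lceil \nu n \rceil} S$ in the notation of the excerpt. So I need a lower bound on $|U +_t S|$ for $t \approx \nu n$, and Pollard gives exactly control on $\sum_{i=1}^t |U +_i S|$. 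Concretely, taking $t := \lceil \nu n \rceil \leq \min\{|U|, |S|\}$ (valid since $U$ has size at least $\tau n \geq \nu n \geq t$ and $|S| = d \geq \nu n \geq t$), Pollard gives $\sum_{i=1}^t |U +_i S| \geq t \min\{n, |U| + |S| - t\}$. Since the sets $U +_i S$ are nested decreasing in $i$, we get $t|U +_t S| \geq \sum_{i=1}^t |U+_iS| - \sum_{i=1}^{t-1}|U+_iS| \cdot$—no, more simply: $\sum_{i=1}^{t-1} |U +_i S| \leq (t-1)|U +_1 S| \leq (t-1)n$, so $|U +_t S| \geq t\min\{n, |U|+|S|-t\} - (t-1)n$. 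When $\tau n \leq |U| \leq (1-\tau)n$ and $|S| - t \geq \nu n$ (which holds by our choice of $\nu, \tau$), the quantity $t\min\{n, |U|+|S|-t\} - (t-1)n$ works out to be at least $|U| + |S| - t - (n - t\cdot(\text{something}))$; the upshot after the arithmetic is that $|RN^+_{\nu,G}(U) \setminus U| = |(U +_t S) \setminus U| \geq \nu n$, establishing the out-expander property. (A symmetric computation, or the observation that $\mathrm{Cay}(\mathbb{Z}_n, -S)$ is the reverse digraph, handles in-expansion, though for \Cref{lem:long path in robust expander} only out-expansion is needed.)

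The main obstacle I anticipate is purely the bookkeeping in the Pollard step: one has to be careful that the threshold $t = \lceil \nu n \rceil$ does not exceed $\min\{|U|, |S|\}$ over the full range $\tau n \leq |U| \leq (1-\tau)n$, and that the telescoping estimate $|U +_t S| \geq t\min\{n, |U|+|S|-t\} - (t-1)n$ genuinely yields a set exceeding $U$ by at least $\nu n$ elements — the regime where $|U|$ is close to $(1-\tau)n$ needs the $\min\{n, \cdot\}$ to kick in and one must confirm the surplus over $|U|$ is still $\geq \nu n$. Choosing the constants in the hierarchy $n^{-1/2}\log n \leq \nu \ll \tau \ll \alpha = d/n$ with a bit of slack (e.g. $\tau = \nu$, $\nu = d/(4n)$, so $\nu + \tau = d/(2n) < d/n = \alpha$) makes all of these inequalities go through, and the factor $4$ in the hypothesis $|S| \geq 4n^{3/4}\sqrt{\log n}$ is exactly what provides that slack. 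Once the expansion is checked, the rest is immediate from \Cref{lem:long path in robust expander}.
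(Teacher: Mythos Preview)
Your overall plan---establish robust out-expansion of $\mathrm{Cay}(\mathbb Z_n,S)$ via Pollard's theorem and then invoke \Cref{lem:long path in robust expander}---is exactly the paper's route. However, the execution has a genuine gap in two related places.

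First, your parameter choice $\nu = d/(4n)$ is too large for the expansion to hold. Take $|U| = \tau n = d/4$: the average of $r_{U,S}(v) = |N^-(v)\cap U|$ over all $v \in \mathbb Z_n$ equals $|U|\,|S|/n = d^2/(4n)$, which for $d = n^{3/4+o(1)}$ is of order $n^{1/2+o(1)}$, far below your threshold $\nu n = d/4 \sim n^{3/4+o(1)}$. Thus for a generic $S$ (e.g.\ random-like), essentially \emph{no} vertex lies in $RN^+_{\nu,G}(U)$, and the robust $(\nu,\tau)$-out-expander property simply fails. The correct scale is $\nu$ of order $(d/n)^2$: the paper takes $\nu = d^2/(8n^2)$ and $\tau = d/(2n)$, which satisfies $\nu + \tau \le \alpha = d/n$ and makes $\nu \ge n^{-1/2}\log n$ equivalent to $d \ge 2\sqrt{2}\,n^{3/4}\sqrt{\log n}$---this is where the hypothesis $d \ge 4n^{3/4}\sqrt{\log n}$ is actually used.

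Second, your telescoping extraction $|U+_tS| \ge t\min\{n,|U|+|S|-t\} - (t-1)n$ is far too lossy to yield anything; a quick check with either your $t = \lceil \nu n\rceil$ or the correct $\nu$ shows the right-hand side is negative in the relevant regime. The paper avoids this entirely: it takes $t = \lceil d/2\rceil$ (not $\lceil \nu n\rceil$), uses the identity
\[
\sum_{i=1}^{t} |U+_iS| \;=\; \sum_{x\in\mathbb Z_n}\min\{t,\,r_{U,S}(x)\},
\]
and bounds the right-hand side by splitting over $x\in U$, $x\in N:=RN^+_{\nu,G}(U)\setminus U$, and $x\notin U\cup N$ (contributing at most $t|U|$, $t|N|$, and $\nu n^2$ respectively). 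Comparing with Pollard's lower bound $t(|U|+|S|-t)$ then gives $|N| \ge |S|-t-\nu n^2/t \ge \lfloor d/2\rfloor - d/4 \ge \nu n$. Once you adopt these two fixes, the rest of your outline goes through verbatim.
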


\begin{proof}
    Let $\eps > 0$, and assume that $n$ is a sufficiently large prime. Let $S \subseteq \mathbb Z_n \setminus \{0\}$ of size $d := |S| \ge 4n^{3/4}\sqrt{\log n}$, and let $G := \mathrm{Cay}(\mathbb Z_n, S)$. Also define $\nu := d^2/(8n^2)$, and let $\tau := d/(2n)$. We will first show that $G$ is a robust $(\nu, \tau)$-out-expander. Let $U \subseteq V(G)$ with $\tau n \leq |U| \leq (1-\tau)n$, and let $t := \lceil d/2 \rceil$. Note that $|S|, |U| \geq t$, so we can apply \Cref{thm:pollard} with $U$, $S$ in place of $A$, $B$ to conclude that
    \[t(|U| + |S| - t) = t \cdot \min \left\{n, |U| + |S| - t\right\} \leq \sum_{i=1}^t |U +_i S| = \sum_{x \in \mathbb Z_n} \min\left\{t, r_{U,S}(x)\right\} = \sum_{x \in V(G)} \min \left\{t, \deg^-_G(x,U)\right\},\]
    where the first equality used that $|U| \leq (1-\tau)n$, so $|U| + |S| - t \leq |U| + d/2 \leq n$. Defining $N := RN^+_{\nu,G}(U) \setminus U$, we can estimate the rightmost sum by
    \[\sum_{x \in V(G)} \min \left\{t, \deg^-_G(x,U)\right\} \leq t|U| + t\left|N\right| + \nu n(n - |U| - |N|) \leq t|U| + t|N| + \nu n^2,\]
    which gives
    \[|N| \geq \frac{t(|S| - t) - \nu n^2}{t} \geq \left\lfloor d/2 \right\rfloor - d/4 \geq d^2/(8n) = \nu n\]
    since $4 \leq d \leq n$. Hence $G$ is a robust $(\nu, \tau)$-out-expander.

    Now set $\alpha := d/n$ so $\delta^{\pm}(G) = d = \alpha n$, and note that $\nu + \tau \leq \alpha$ and $\nu = d^2/(8n^2) > n^{-1/2}\log n$. By \Cref{lem:long path in robust expander}, $G$ contains a rainbow directed path of length at least $(1-\eps)d$.
\end{proof}

We note that there has been plenty of work on generalising Pollard's Theorem to other Abelian groups. One can use a generalization due to Green and Ruzsa \cite{pollard-generalized} (see also \cite{Grynkiewicz2013}) to extend \Cref{thm:Zp} to any Abelian group $\Gamma$, but with the density requirement depending on the size of the largest proper subgroup of $\Gamma$.

\section{Long rainbow paths in regular graphs}\label{sec:mop}
In this section, we prove \Cref{thm:schrijver-asymptotic-intro}, which says that we can find a long rainbow path in any regular properly edge-coloured (undirected) graph. The proof is split into two parts. In the first part (\Cref{sec:expanding case}), we show that if the graph expands in a particular way, then we can iteratively grow our desired long rainbow path from a single vertex. In the second part (\Cref{sec:non-expanding case}), we show that when this expansion property fails, then we can reduce the problem to a dense subgraph and apply the tools of Sections~\ref{sec:pathforests} and~\ref{sec:expansion}. 

The type of expansion that we will look for will be quantified in terms of an auxiliary object called a \emph{mop}, which we now define.

\begin{defn}
    Let $G$ be a properly edge-coloured graph. For non-negative integers $\ell, s, t$,  an $(\ell,s, t)$-\emph{mop} in $G$ is a quadruple $(P,v, U, \{Q_u\}_{u \in U})$ such that $P \subseteq G$ is a rainbow path of length $\ell$ with $v$ as an endpoint, $U \subseteq V(G)$ is a set of $t$ vertices, and for each $u \in U$, $Q_u \subseteq G$ is a path from $v$ to $u$ of length at most $s$ such that $P\cup Q_u$ is a rainbow path. We call $P$ the \emph{handle} of the mop, the vertices in $U$ the \emph{ends}, and the $Q_u$'s the \emph{strands}. See \Cref{fig:mop} for an illustration.
\end{defn}

\begin{figure}[h]
    \centering
    \includegraphics[width=0.7\linewidth]{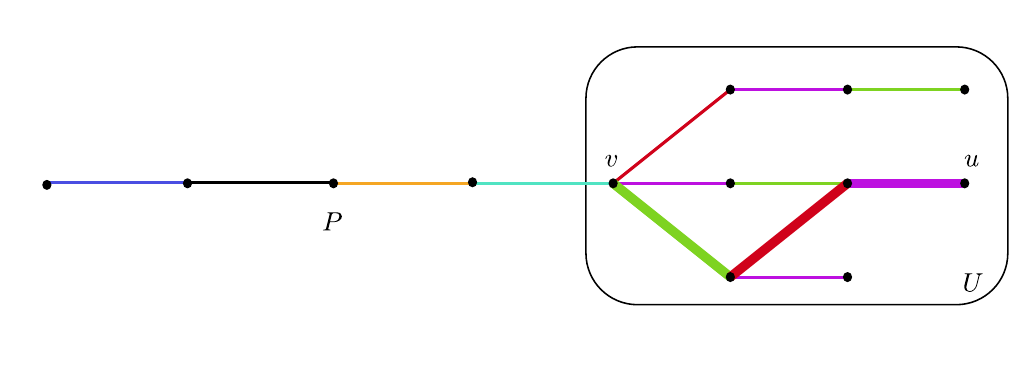}
    \caption{A $(4,3,9)$-mop $(P,v,U,\{Q_u\}_{u \in U})$. Thick edges make a single strand $Q_u$ .}
    \label{fig:mop}
\end{figure}

To give some intuition behind this definition, recall the standard greedy argument for finding a path of length $d$ in a graph with minimum degree $d$. We first take the longest path and argue that if the path was of length $\leq d-1$, then the end-vertex could not send all of its neighbours to the path and hence one can use a neighbour to extend the path, contradicting maximality. The issue with this argument in the rainbow setting is that, in addition to not sending edges to the already built path, we must also not reuse colours appearing on the path (though this is already enough to conclude the existence of a rainbow path of length $\sim d/2$). The way around this is that instead of looking at a single longest path, we look for an object that consists of a long rainbow path (the handle) and many short rainbow continuations (strands) through which we can reach many vertices (ends). The major benefit of this approach is that now we have many potential end-vertices of a long rainbow path instead of just one. This allows one to easily extend one of these paths by a single vertex, but in order to iterate the argument, this is not good enough; we want to preserve the mop structure as well. The following definition captures a weak expansion property we require from the ends in order to be able to perform the iteration.

\begin{defn}
    We say that an $(\ell,s,t)$-mop $(P, v, U, \{Q_u\}_{u \in U})$ in a properly edge-coloured graph $G$ is \emph{$(d, \gamma)$-leaky} if $e(U, V(G) \setminus U; C(G) \setminus C(P)) \geq \gamma (d-\ell)|U|$. We say that $G$ is a \emph{$(d,s_0, \gamma,\alpha)$-leaky mop expander} if every $(\ell,s,t)$-mop in $G$ with $s \leq s_0$ and $t \leq \alpha^{-1}(d-\ell)$ is $(d,\gamma)$-leaky. See \Cref{fig:leaky-mop} for an illustration.
\end{defn}

\begin{figure}[h]
    \centering
    \includegraphics[width=0.7\linewidth]{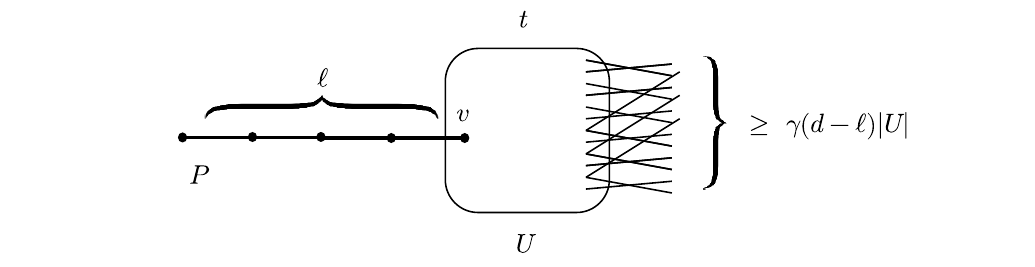}
    \caption{Illustration of a $(d,\gamma)$-leaky mop. We note that the expanding edges may involve vertices, but not colours, from the handle. They may also reuse vertices and colours from the strands.}
    \label{fig:leaky-mop}
\end{figure}

At the highest level of our proof of \Cref{thm:schrijver-asymptotic-intro}, we distinguish two cases: either $G$ is a leaky mop expander with appropriate parameters, or, $G$ is not. In the former case, we use this weak expansion property to build mops with longer and longer handles. In the latter case, we apply the tools from previous sections to the dense subgraph induced by the ends of a mop which is not leaky.

\subsection{Expanding case}\label{sec:expanding case}

Here, we prove that if $G$ is a leaky mop expander, then we can build our desired long path. The basic idea is that a leaky $(\ell,s,t)$-mop with the number $t$ of ends large enough compared to the length $\ell$ of the handle can be made significantly larger (increasing $t$) by increasing the maximum length $s$ of the strands by one. If $t$ is \textit{very} large compared to $\ell$, then we can extend the handle to build a longer mop (increasing $\ell$) by incorporating a strand whose end is sufficiently ``far'' from the handle in an appropriate sense. In a $d$-regular leaky mop expander, this can be iterated until the handle has length $d - o(d)$. 

The following lemma will allow us to increase the number of ends of a mop by allowing the maximum length of its strands to increase by one.

\begin{lemma}\label{lem:more ends}
	Let $d, \ell, s, t, \Delta$ be nonnegative integers, and let $\gamma > 0$. Let $G$ be a properly edge-coloured graph with $\Delta(G) \leq \Delta$. If $G$ contains a $(d,\gamma)$-leaky $(\ell, s, t)$-mop in $G$, then $G$ contains an $(\ell,s+1,t')$-mop with 
	\[t' \geq \left(1 + \frac{\gamma(d - \ell) - 2s}{\Delta}\right)t - (\ell+1).\]
\end{lemma}

Sometimes, it will also be necessary to ensure that the new ends avoid a certain set $B$ of ``bad'' vertices, so we prove the following more general lemma. \Cref{lem:more ends} is simply the special case where $B = \emptyset$ and $\Delta' = 0$. \Cref{fig:boost} illustrates the statement of the lemma.

\begin{figure}[h]
    \centering
    \includegraphics[width=0.7\linewidth]{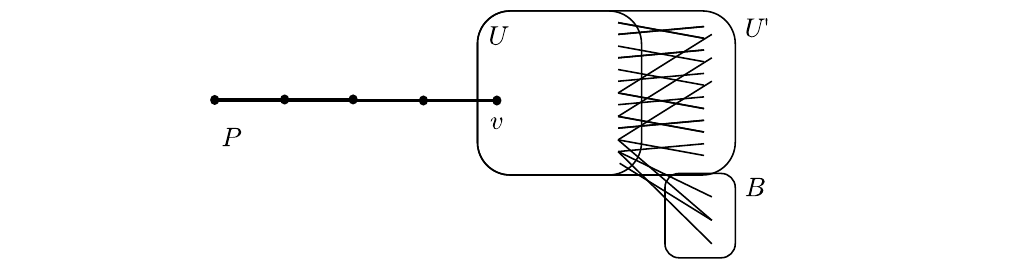}
    \caption{Illustration of the setup in \Cref{lem:more ends bad set}. By taking one step out, using the leaky expansion property, we can reach many new vertices, even while avoiding a given small set of ``bad'' vertices, and taking into account interactions with the handle and current strands.}
    \label{fig:boost}
\end{figure}

\begin{lemma}\label{lem:more ends bad set}
	Let $d, \ell, s, t, \Delta, \Delta'$ be nonnegative integers, and let $\gamma > 0$. Let $G$ be a properly edge-coloured graph with $\Delta(G) \leq \Delta$. Let $(P, v, U, \{Q_u\}_{u \in U})$ be a $(d,\gamma)$-leaky $(\ell, s, t)$-mop in $G$, and let $B \subseteq V(G) \setminus U$ be a set of vertices with the property that for every $u \in U$, we have $\deg(u,B;C(G) \setminus C(P)) \leq \Delta'$. Then $G$ contains an $(\ell,s+1,t')$-mop $(P, v, U', \{\tilde Q_{u'}\}_{u' \in U'})$ with $U' \cap B = \emptyset$ and 
	\[t' \geq \left(1 + \frac{\gamma(d - \ell) - \Delta' - 2s}{\Delta}\right)t - |V(P) \setminus B|.\]
\end{lemma}
\begin{proof}
	Let $C' := C(G) \setminus C(P)$.
    At a high level, our plan is to expand the current set of ends $U$, by colours in $C'$, using our leaky property, and argue that many of the new vertices we reach are suitable ends for our new mop. Let~$W$ be the set of these ``suitable'' vertices: more precisely, the set of vertices $w \in V(G) \setminus (U \cup B)$ with some neighbour $u \in U$ such that $w \notin V(P \cup Q_u)$ and $C(uw) \in C' \setminus C(Q_u)$ (so that for $\tilde Q_{w} := Q_u \cup \{uw\}$, we have that $P \cup \tilde Q_{w}$ is a rainbow path). Then $(P, v, U \cup W, \{Q_u\}_{u \in U} \cup \{\tilde Q_{w}\}_{w \in W}\})$ is a $(0,s+1, |U \cup W|)$-mop with $(U \cup W) \cap B = \emptyset$. It suffices to show that $|W| \geq  \left(\frac{\gamma(d - \ell) - \Delta' - 2s}{\Delta}\right)t - |V(P) \setminus B|$.
	
	We count edges from $U$ to $V(G) \setminus U$ with colours in $C'$. Since $(P, v, U, \{Q_u\}_{u \in U})$ is $(d,\gamma)$-leaky, we have
	\begin{align*}
		\gamma (d-\ell) |U| &\leq e(U, V(G) \setminus U; C') \\
		&\leq e(U, B; C') + e(U, W \cup (V(P) \setminus B); C') + e(U, V(G) \setminus (U \cup W \cup V(P) \cup B); C') \\
		&\leq \Delta'|U| + \Delta |W \cup (V(P) \setminus B)| + e\left(U, V(G) \setminus \left(U \cup W \cup V(P) \cup B\right); C'\right).
	\end{align*}
    We note that in the final expression, the first term is small, and we wish to lower bound the second. We next argue that the third term is also small. It accounts for the edges whose endpoint is blocked by the strand that connects it to the handle. Indeed, by the definition of $W$, for every $uw \in E(G)$ with $u \in U$, $w \in V(G) \setminus (U \cup W \cup V(P) \cup B)$, and $C(uw) \in C'$, we either have that $w \in V(Q_u) \setminus \{u\}$ or $C(uw) \in C(Q_u)$, and so 
	\[e(U, V(G) \setminus (U \cup W \cup V(P) \cup B); C') \leq \sum_{u \in U} \left(|V(Q_u) \setminus \{u\}| + |C(Q_u)|\right) \leq 2s|U|\]
	since $G$ is properly edge-coloured. Thus
	\[\gamma (d-\ell)|U| \leq \Delta'|U| + \Delta\left|W \cup (V(P) \setminus B)\right| + 2s|U|.\]
	Rearranging gives
	\[|W| \geq \left(\frac{\gamma (d-\ell) - \Delta' - 2s}{\Delta}\right)|U| - |V(P) \setminus B| = \left(\frac{\gamma (d-\ell) - \Delta' - 2s}{\Delta}\right)t - |V(P) \setminus B|,\]
    as desired.
\end{proof}

We now prove \Cref{thm:schrijver-asymptotic-intro} in the case where $G$ is a leaky mop expander.

\begin{lemma}\label{lem:mop argument}
    Let $1/d \ll \alpha \ll 1/s_0 \ll \gamma, \eps, 1/K \leq 1$, and let $G$ be a properly edge-coloured graph with $d = \delta(G) \leq \Delta(G) \leq Kd$. If $G$ is a $(d,s_0, \gamma, \alpha)$-leaky mop expander, then $G$ contains a rainbow path of length at least $(1-\eps)d$.
\end{lemma}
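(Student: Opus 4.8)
The plan is to fix a $(d,s_0,\gamma,\alpha)$-leaky mop expander $G$ and grow a rainbow path inside it by maintaining an $(\ell,s,t)$-mop $(P,v,U,\{Q_u\}_{u\in U})$ with $s\le s_0$, steadily improving it until the handle length $\ell$ reaches $(1-\eps)d$; that handle $P$ is then the path we want. I would start from the trivial mop whose handle is a single vertex, and at each step perform whichever of the two moves below applies, tracking a potential that combines $\ell$ with $\log t$ and $-s$, weighted (with $\ell$ carrying a huge coefficient) so every move strictly increases it. Since $\ell\le d$, $0\le s\le s_0$, and $t\le n$, the potential is bounded, so the process must halt; and, as the two moves below cover every mop with $\ell<(1-\eps)d$, it can only halt once $\ell\ge(1-\eps)d$. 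The real content is therefore not termination but showing that the moves can always be carried out.

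\textbf{Widen (increase $t$).} As long as $s<s_0$ and $t\le\alpha^{-1}(d-\ell)$, leakiness gives $e(U,V(G)\setminus U;C(G)\setminus C(P))\ge\gamma(d-\ell)t$. I would try to append each such edge $(u,w)$ to the strand $Q_u$; this produces a rainbow continuation of $P$ to $w$ of length at most $s+1$ unless $w\in V(P)$, or $w\in V(Q_u)$, or $C(u,w)\in C(Q_u)$. There are at most $|V(P)|\cdot\min(t,\Delta(G))$ edges of the first kind and at most $2s_0$ edges per end of the other two kinds, and the surviving edges meet at most $\Delta(G)\le Kd$ new ends each, so one obtains a rainbow mop with maximum strand length $s+1$ and
\[
t'\ \ge\ t+\frac{\gamma(d-\ell)t-\ell\min(t,Kd)-O(s_0t)}{\min(t,Kd)},
\]
which is strictly (indeed, by a constant factor once $t\gtrsim Kd/(\gamma\eps)$, and by an additive $\Omega(d)$ jump when $t$ is small and $\ell\lesssim\gamma d$) larger than $t$ in the parameter ranges that arise.

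\textbf{Lengthen (increase $\ell$).} When $t$ has been pushed up to roughly $\alpha^{-1}(d-\ell)$, widening saturates and I would instead absorb a strand prefix into the handle: averaging against the leakiness bound, some end $u^\ast$ sends many edges to $V(G)\setminus V(P)$ with colours outside $C(P)$, and the new handle becomes $P$ followed by (a prefix of) $Q_{u^\ast}$, of length $\ell'\in[\ell+1,\ell+s_0]$, on which a mop is rebuilt. When $\ell$ is not too large, the distance-one out-neighbourhood of $u^\ast$ already supplies $\Omega(\gamma(d-\ell))$ ends for the new mop. When $\ell$ is within $\eps d$ of $d$ this direct count is too weak, and here the key is to also use the $O(|V(P)|\cdot t)$ chords from the ends back into the handle: each such chord performs a rainbow rotation of the current path (an analogue of Pósa's rotation--extension step, arranged to keep the colouring proper and the mop structure intact), and iterating these rotations seeded at $u^\ast$ regenerates the required supply of new ends without driving $s$ past $s_0$.

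\textbf{Main obstacle.} The difficulty is concentrated entirely in the \emph{Lengthen} move in the regime $\ell=(1-o(1))d$, where the reservoir $d-\ell$ of unused colours is only $\Theta(\eps d)$, the error term $\sim Kd\ell$ coming from chords into the handle has the same order as the leakiness budget $\gamma(d-\ell)t$ unless $t$ is large, and one simultaneously cannot afford to lose more than a $\mathrm{poly}(1/\alpha)$ factor of ends each time the handle grows (since only $s_0$ lengthenings fit before $s$ must be rebuilt). Making the rainbow rotation--extension subroutine output enough ends, and verifying that \emph{Widen} always suffices to restore $t$ between successive lengthenings in this tight regime, is exactly where the hierarchy $1/d\ll\alpha\ll1/s_0\ll\gamma,\eps,1/K$ must be used in an essential way.
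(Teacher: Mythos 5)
Your overall architecture --- maintain an $(\ell,s,t)$-mop, widen it using leakiness while $s<s_0$, and lengthen the handle by absorbing a strand once widening saturates --- is the same as the paper's, and your \emph{Widen} analysis is essentially the paper's $s<s_0$ case. But there is a genuine gap, and you have located it yourself: the \emph{Lengthen} move in the regime where $\ell$ is large is never actually carried out. You appeal to a ``rainbow rotation--extension subroutine'' that would ``regenerate the required supply of new ends'', but no such subroutine is constructed, and it is far from clear that one exists: a rotation step re-uses vertices and colours of the handle, so keeping everything rainbow and preserving the mop structure through several rotations is essentially as hard as the original problem. Moreover, your own Widen increment $t'-t\ \ge\ \bigl(\gamma(d-\ell)t-\ell\min(t,Kd)-O(s_0t)\bigr)/\min(t,Kd)$ is \emph{negative} when $t\le Kd$ and $\ell>\gamma(d-\ell)$, i.e.\ precisely in the state one lands in after a lengthening that leaves few ends; your caveat ``when $t$ is small and $\ell\lesssim\gamma d$'' does not cover this, so the claim that the two moves cover every mop with $\ell<(1-\eps)d$ is unsubstantiated.

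The paper closes this gap without any rotation. It builds into the mop invariant the quantitative lower bound $t\ge(1+\gamma\eps/(2K))^{s-1}\eps d/2$, so that by the time $s=s_0$ the number of ends is super-exponentially large in $s_0$. It then forms the iterated closure $B_0=V(P)$, $B_i=B_{i-1}\cup\{x:\deg(x,B_{i-1};C')\ge\gamma d'/4\}$ up to an intermediate level $s_1$ with $1/s_0\ll 1/s_1\ll\gamma,\eps,1/K$; since $|B_{s_1}|\le(1+4\gamma^{-1}\eps^{-1}K)^{s_1}d<t$, some end $u$ lies outside $B_{s_1}$. Regrowing a mop from the longer handle $P\cup Q_u$, with the level-$i$ ends required to avoid $B_{s_1-i}$, guarantees $|W_1|\ge\eps d/2$ ends immediately (because $u\notin B_{s_1}$ sends few $C'$-edges into $B_{s_1-1}\supseteq V(P)$), and at the first level $i$ where the growth rate stalls, all $C''$-edges leaving $W_{i-1}$ are absorbed by $W_i\setminus W_{i-1}$, by $B_{s_1-i}$, and by the short strands, contradicting leakiness. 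This ``escape from the handle's neighbourhood closure'' is the idea your proposal is missing; supplying it (or a fully worked-out rainbow rotation argument with the required quantitative output) is necessary before the proof can be considered complete.
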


\begin{proof}
	Introduce a further parameter $s_1$ so that $1/s_0 \ll 1/s_1 \ll \gamma, \eps, 1/K$. Our strategy is to iteratively apply \Cref{lem:more ends} to build mops with more and more ends, then pick one of the strands carefully, extend our handle with it, and then apply \Cref{lem:more ends bad set} to construct a mop from scratch with this longer handle. Iterating this procedure will eventually produce a mop whose handle has length at least $(1-\eps)d$. We begin by producing mops with many ends whose handle consists of a single vertex.
	\begin{claim*}[Initialise]
		For every $1 \leq s \leq s_0$, there exists a $(0,s,t)$-mop in $G$ with $t \geq \left(1 + \frac{\gamma}{2K}\right)^{s-1}d$.
	\end{claim*}
	\begin{cla_proof}
		We prove the claim by induction on $s$. Taking any vertex $v$ and $d$ of its neighbours gives a $(0,1,d)$-mop, so this proves the base case $s=1$. Now suppose that $2 \leq s \leq s_0$, and assume that we have a $(0,s-1,t')$-mop $M$ with $t' \geq \left(1 + \frac{\gamma}{2K}\right)^{s-2}d$. If $t' \geq \left(1 + \frac{\gamma}{2K}\right)^{s-1}d$, then $M$ is already a $(0,s,t')$-mop satisfying the claim, so we may assume that $t' < \left(1 + \frac{\gamma}{2K}\right)^{s-1}d \leq \alpha^{-1}d$ since $\alpha \ll 1/s_0, \gamma, 1/K$. Since $G$ is a $(d,s_0, \gamma, \alpha)$-leaky mop expander, it follows that $M$ is $(d,\gamma)$-leaky. Now by \Cref{lem:more ends}, there exists a $(0,s,t)$-mop with
		\[t \geq \left(1 + \frac{\gamma d - 2s_0}{Kd}\right)t' - 1 \geq \left(1 + \frac{3\gamma}{4K}\right)t' - 1 \geq \left(1 + \frac{\gamma}{2K}\right)^{s-1}d\]
		since $1/d \ll 1/s_0, \gamma, 1/K$, and $t' \geq \left(1 + \frac{\gamma}{2K}\right)^{s-2}d \geq d$.
	\end{cla_proof}
    
    
	We can apply the same argument as we did in the above claim to mops with longer handles, provided we have a sufficient number of ends with which to begin the process.
	\begin{claim*}[Boost]
		Let $0 \leq \ell \leq (1-\eps)d$. If there exists an $(\ell,s_1,t_1)$-mop in $G$ with $t_1 \geq \left(1 + \frac{\gamma \eps}{2K}\right)^{s_1-1}\frac{\eps d}{2}$, then for every $s_1 \leq s \leq s_0$, there exists an $(\ell,s,t)$-mop in $G$ with $t \geq \left(1 + \frac{\gamma \eps}{2K}\right)^{s-1}\frac{\eps d}{2}$.
	\end{claim*}
	\begin{cla_proof}
		We induct on $s$, where the base case $s=s_1$ holds by assumption. Now let $s_1 + 1 \leq s \leq s_0$, and consider an $(\ell,s-1,t')$-mop $M$ with $t' \geq \left(1 + \frac{\gamma \eps}{2K}\right)^{s-2}\frac{\eps d}{2}$. If $t' \geq \left(1 + \frac{\gamma \eps}{2K}\right)^{s-1}\frac{\eps d}{2}$, then $M$ is an $(\ell, s, t')$-mop satisfying the claim, so we may assume that $t' < \left(1 + \frac{\gamma \eps}{2K}\right)^{s-1}\frac{\eps d}{2} \leq \alpha^{-1}(d - \ell)$ since $\alpha \ll 1/s_0, \gamma, \eps, 1/K$ and $\ell \leq (1-\eps)d$. Since $G$ is a $(d,s_0, \gamma, \alpha)$-leaky mop expander, it follows that $M$ is $(d,\gamma)$-leaky. Now by \Cref{lem:more ends}, there exists an $(\ell,s,t)$-mop with
		\begin{equation}\label{eq:mop-growth}
		t \geq \left(1 + \frac{\gamma(d-\ell) - 2s_0}{Kd}\right)t' - (\ell + 1) \geq \left(1 + \frac{3\gamma \eps d}{4Kd}\right)t' - d \geq \left(1 + \frac{\gamma \eps}{2K}\right)^{s-1}\frac{\eps d}{2} + \frac{\gamma \eps}{4K}t' - d
		\end{equation}
		since $1/d \ll 1/s_0, \gamma, \eps, 1/K$ and $\ell \leq (1-\eps)d$. Now
		\[\frac{\gamma \eps}{4K}t' \geq \frac{\gamma \eps}{4K}\left(1 + \frac{\gamma \eps}{2K}\right)^{s_1-1}\frac{\eps d}{2} \geq d,\]
		since $1/s_1 \ll \gamma, \eps, 1/K$, so \eqref{eq:mop-growth} gives $t \geq \left(1 + \frac{\gamma \eps}{2K}\right)^{s-1}\frac{\eps d}{2}$. This concludes the inductive step.
	\end{cla_proof}
    
	The previous two claims show that we can grow the number of ends rapidly in exchange for increasing the length of the strands. The next ``extend'' claim is the key part of the proof; it shows that if we have a \emph{very} large number of ends, then we can increase the length of the handle at the expense of decreasing the number of ends (which we can then again boost by the above claim). By iterating this alternately with the boost claim, we will be able to construct mops with longer and longer handles until we reach a path of length at least $(1-\eps)d$. We will achieve this by simply appending a carefully chosen strand to the current handle, and showing that, thanks to our careful choice of the extending strand, this new handle also admits many new strands. 
    The major issue here, compared to the boost claim, is that we will need to grow our new ends set from scratch, so a long handle
    could easily block the growth. To deal with this, we use our maximum degree assumption to argue that at least one of the ends  is ``far'' enough from the handle that many short walks from this end miss the handle completely.
    
	\begin{claim*}[Extend]
		Let $0 \leq \ell \leq (1-\eps)d$. If there exists an $(\ell, s_0, t_0)$-mop in $G$ with $t_0 \geq \left(1 + \frac{\gamma \eps}{2K}\right)^{s_0-1}\frac{\eps d}{2}$, then there exists an $(\ell', s_1, t_1)$-mop with $\ell' > \ell$ and $t_1 \geq \left(1 + \frac{\gamma \eps}{2K}\right)^{s_1-1}\frac{\eps d}{2}$.
	\end{claim*}
	\begin{cla_proof}
		Let $(P, v, U, \{Q_u\}_{u \in U})$ be an $(\ell, s_0, t_0)$-mop. Let $C' := C(G) \setminus C(P)$, and let $d' := d - \ell$, which is at least $\eps d$ since $\ell \leq (1-\eps)d$. Define $B_0 := V(P)$, and for each $1 \leq i \leq s_1$, define
    \[B_i := B_{i-1} \cup \{x \in V(G) : \deg(x, B_{i-1}; C') \geq \gamma d'/4\}.\]
    One should think of these $B_i$'s as the set of ``bad/blocked'' vertices at distance $i$. Namely, $B_0$ consists of vertices already on the handle, and each subsequent $B_i$ consists of vertices with ``too many'' neighbours in the previous bad set $B_{i-1}$ (which includes the handle). Our first task is to bound the growth of the bad sets to show that the largest set $B_{s_1}$ cannot contain all of the ends in $U$.
    See \Cref{fig:extend-handle} for an illustration. 

    \begin{figure}[h]
    \centering
    \includegraphics[width=0.7\linewidth]{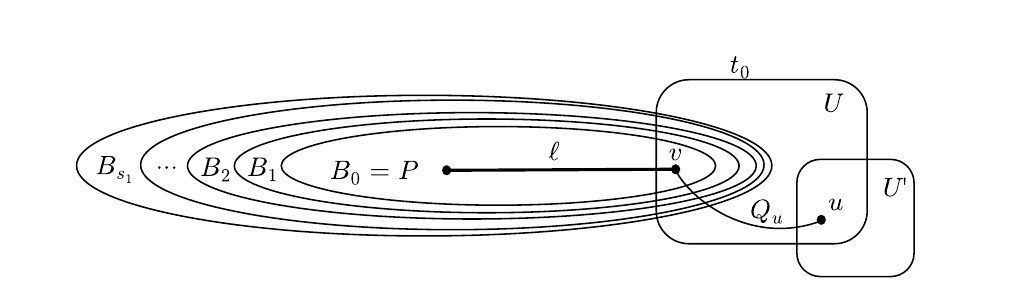}
    \caption{Illustration of the argument in the extend claim. We iteratively build sets $B_i$ which have a lot of neighbours in the previous set $B_{i-1}$ (starting with $B_0=P$). By choosing an end $u\in  U\setminus B_{s_1}$, we can extend the handle to $P \cup Q_u$ and build new strands from scratch using \Cref{lem:more ends bad set} so as to avoid bad sets (when building layer $i$ we avoid $B_{s_1-i}$).}
    \label{fig:extend-handle}
\end{figure}
    
    To do so, note that since $\Delta(G) \leq Kd \leq \eps^{-1} K d'$, we have $|B_i| \leq |B_{i-1}|+|B_{i-1}|\Delta(G)/(\gamma d'/4) \leq (1 + 4\gamma^{-1}\eps^{-1} K)|B_{i-1}|$. In particular,
    \[|B_{s_1}| \leq \left(1 + 4\gamma^{-1}\eps^{-1} K\right)^{s_1}d < \left(1 + \frac{\gamma \eps}{2K}\right)^{s_0-1}\eps d/2 \leq t = |U|\]
    since $1/s_0 \ll 1/s_1, \gamma, \eps, 1/K$,
    so there indeed exists some vertex $u \in U \setminus B_{s_1}$. Let $P' := P \cup Q_u$, and let $\ell' := \ell + e(Q_u) > \ell$ be the length of $P'$. Our new mop will have $P'$ as a handle and $u$ as its end-vertex, and what remains to be shown is that we can still find many ends using strands of length up to $s_1$.
		
	We will produce, for each $1 \leq i \leq s_1$, in turn, an $(\ell',i, t_i')$-mop $M_i = (P', u, W_i, \{\tilde Q_{i,w}\}_{w \in W_i})$ with $t_i' \geq \left(1 + \frac{\gamma \eps}{2K}\right)^{i-1}\frac{\eps d}{2}$ and $W_i \cap B_{s_1-i} = \emptyset$, as follows.
		
		First, since $G$ is properly edge-coloured with minimum degree $d$, $G[C']$ has minimum degree at least $d'$. Thus $u$ has at least $d'$ neighbours $w$ with $C(uw) \in C'$. Since $u \notin B_{s_1}$, at most $\gamma d'/4$ of these are in $B_{s_1-1}$, at most $|C(Q_u)| \leq s_1$ have $C(uw) \in C(Q_u)$, and at most $|V(Q_u) \setminus \{u\}|$ have $w \in V(Q_u) \setminus \{u\}$. Since $V(P) \subseteq B_{s_1-1}$, the remaining neighbours $w$ are not in $V(P')$, so $P' \cup \{uw\}$ is a rainbow path. Let $W_1$ be the set of such neighbours. By what we have observed, this has size
		\[|W_1| \geq d' - \frac{\gamma d'}{4} - 2s_0 \geq \frac{\eps d}{2}\]
		since $1/d \ll 1/s_0, \eps$ and $d' \geq \eps d$. Thus $M_1 := (P', u, W_1, \{uw\}_{w \in W})$ is an $(\ell', 1, t_1')$-mop with the desired properties.
		
		Now suppose we have constructed our desired mop $M_{i-1} = (P', u, W_{i-1}, \{\tilde Q_{i-1,w}\}_{w \in W_{i-1}})$ for some $2 \leq i \leq s_1$. If $t_{i-1}' \geq \left(1 + \frac{\gamma \eps}{2K}\right)^{i-1}\frac{\eps d}{2}$, then we can take $M_i = M_{i-1}$. Otherwise, we have $t_{i-1}' < \left(1 + \frac{\gamma \eps}{2K}\right)^{s_1-1}\frac{\eps d}{2} \leq \alpha^{-1} (d-\ell')$ since $1/d, \alpha \ll 1/s_1, \gamma, \eps, 1/K$ and $d - \ell' \geq d' - s_1 \geq \frac{\eps d}{2}$. Therefore, since $G$ is a $(d,s_0, \gamma, \alpha)$-leaky mop expander, $M_{i-1}$ is $(d,\gamma)$-leaky. Also since $W_{i-1} \cap B_{s_1-i+1} = \emptyset$, we have that 
		\[\deg(w,B_{s_1-i};C(G) \setminus C(P')) \leq \deg(w,B_{s_1-i};C') \leq \gamma d'/4\]
		for every $w \in W_{i-1}$. Therefore, by \Cref{lem:more ends bad set} with $\ell$, $s$, $t$, $\Delta$, $\Delta'$, $(P, v, U, \{Q_u\}_{u \in U})$, $B$ replaced with $\ell'$, $i-1$, $t_{i-1}'$, $Kd$, $\gamma d'/4$, $(P', u, W_{i-1}, \{\tilde Q_{i-1,w}\}_{w \in W_{i-1}})$, $B_{s_1-i}$, respectively, there exists an $(\ell', i, t_i')$-mop $M_i := (P', u, W_i, \{\tilde Q_{i,w}\}_{w \in W_i})$ with $W_i \cap B_{s_1-i} = \emptyset$ and 
		\begin{align*}
			t_i' &\geq \left(1 + \frac{\gamma(d-\ell') - \gamma d'/4 - 2s_1}{Kd}\right)t_{i-1}' - |V(P') \setminus B_{s_1-i}| \\
			&\geq \left(1 + \frac{3\gamma \eps d/4 - \gamma s_0 - 2s_1}{Kd}\right)t_{i-1}' - |V(P') \setminus B_{s_1-i}| \\
			&\geq \left(1 + \frac{5\gamma \eps}{8K}\right)t_{i-1}' - |V(P') \setminus B_{s_1-i}|
		\end{align*}
		since $\ell' - \ell \leq s_0$, $d' = d- \ell \geq \eps d$, and $1/d \ll 1/s_0, 1/s_1, \gamma, \eps$. Now $V(P) \subseteq B_{s_1-i}$, so $|V(P') \setminus B_{s_1-i}| \leq |V(Q_u) \setminus \{v\}| \leq s_0$, and we have
		\[t_i' \geq \left(1 + \frac{\gamma \eps}{2K}\right)^{i-1} \frac{\eps d}{2} + \frac{\gamma \eps}{8K}t_{i-1}' - s_0 \geq \left(1 + \frac{\gamma \eps}{2K}\right)^{i-1} \frac{\eps d}{2}.\]
		since $t_{i-1}' \geq \left(1 + \frac{\gamma \eps}{2K}\right)^{i-2} \frac{\eps d}{2} \geq \frac{\eps d}{2}$ and $1/d \ll 1/s_0, \gamma, \eps, 1/K$. Thus $M_i$ satisfies the desired properties.
		
		By induction, we have that $M_{s_1}$ is an $(\ell', s_1, t_{s_1}')$-mop with $t_{s_1}' \geq \left(1 + \frac{\gamma \eps}{2K}\right)^{s_1-1} \frac{\eps d}{2}$, and this proves the claim.
	\end{cla_proof}
	Now to finish the proof, we consider an $(\ell,s_1,t)$-mop $(P, v, U, \{Q_u\}_{u \in U})$ with $t \geq \left(1 + \frac{\gamma \eps}{2K}\right)^{s_1-1} \frac{\eps d}{2}$, and with $\ell$ as large as possible. By the initialise claim, such mops exist with $\ell = 0$, and by the boost and extend claims and the maximality of $\ell$, we must have $\ell > (1 - \eps)d$; that is, $P$ is a path of length greater than $(1-\eps)d$.
\end{proof}
		
\subsection{Non-expanding case}\label{sec:non-expanding case}

We are now ready to prove \Cref{thm:schrijver-asymptotic-intro}. In fact, we prove the following slightly more general result for almost-regular graphs.
By \Cref{lem:mop argument}, we only need to deal with the case when $G$ is \emph{not} a leaky mop expander. This means it contains a mop that fails to be leaky, so that the set of ends of this mop does not send many edges outside in non-handle colours, and we may find a robustly expanding subgraph inside it. We may then use the results of Sections~\ref{sec:pathforests} and~\ref{sec:expansion} to find a rainbow path within this robust expander, which combines with the handle of the mop and one of the strands to give the desired rainbow path. 

\begin{theorem} \label{thm:main}
    For any $\eps, K > 0$, there exists $d_0$ such that the following holds for all $d \geq d_0$. Let $G$ be a properly edge-coloured graph with $d = \delta(G) \leq \Delta(G) \leq Kd$. Then $G$ has a rainbow path of length at least $(1-\eps)d$.
\end{theorem}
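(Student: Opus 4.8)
The plan is to derive \Cref{thm:main} by a dichotomy that feeds into the two toolkits already developed in the paper. Given $G$ with $d = \delta(G) \leq \Delta(G) \leq Kd$, fix auxiliary constants $1/d \ll \alpha \ll 1/s_0 \ll \gamma \ll \eps, 1/K$ as in \Cref{lem:mop argument}. If $G$ happens to be a $(d, s_0, \gamma, \alpha)$-leaky mop expander, then \Cref{lem:mop argument} immediately gives a rainbow path of length $(1-\eps)d$ and we are done. So we may assume $G$ is \emph{not} a leaky mop expander: there is an $(\ell, s, t)$-mop $(P, v, U, \{Q_u\}_{u \in U})$ with $s \leq s_0$ and $t \leq \alpha^{-1}(d - \ell)$ which is not $(d,\gamma)$-leaky, i.e.\ writing $d' := d - \ell$ and $C' := C(G) \setminus C(P)$, we have $e(U, V(G) \setminus U; C') < \gamma d' |U|$. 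We may assume $\ell < (1-\eps)d$, as otherwise $P$ is already the desired path, so $d' \geq \eps d$.

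The point of this non-leaky mop is that $U$ is a vertex set which is ``almost closed'' in the colours of $C'$: very few $C'$-edges leave $U$. First I would clean up $U$ by iteratively discarding any vertex $u \in U$ with $\deg(u, U; C') < (1 - \sqrt{\gamma}) d'$ (say); since each such vertex must send at least $\sqrt{\gamma} d'$ $C'$-edges either outside $U$ or to previously discarded vertices, and $G$ is properly coloured with $\Delta(G) \leq Kd$, a standard counting argument shows we discard at most a $O(\sqrt\gamma/\eps)$-fraction of $U$, leaving a nonempty set $U^\star \subseteq U$ with $|U^\star| \geq (1 - o_\gamma(1))|U|$ and $\deg(u, U^\star; C') \geq (1 - 2\sqrt{\gamma})d'$ for every $u \in U^\star$. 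Crucially $|U^\star| \leq |U| \leq \alpha^{-1} d'$, so $d' = \Omega(\alpha |U^\star|) = \Omega(\alpha |V(G[U^\star])|)$, and the graph $H := G[U^\star; C']$ is a properly edge-coloured graph on $|U^\star|$ vertices with minimum degree $\Omega(\alpha |U^\star|)$. This is exactly the dense, linear-minimum-degree regime handled by \Cref{lem:find robust expander}, \Cref{lem:rainbow path forest}, and \Cref{lem:connecting lemma} (through their undirected specializations via the associated symmetric digraph). Applying \Cref{lem:find robust expander} to $H$ gives a robust $(\nu,\tau)$-expander subgraph $H'$ with minimum degree still at least $(1 - 3\sqrt\gamma)d'$ minus a negligible loss, and then following the scheme of \Cref{lem:long path in robust expander} — set aside a random reservoir via \Cref{lem:connecting lemma}, extract a rainbow path forest with $O(1)$ components and $(1 - o(1))d'$ edges via \Cref{lem:rainbow path forest}, then connect the components through the reservoir — produces a rainbow path $R$ inside $H$ of length at least $(1 - \eps/2)d'$ using only colours from $C'$.

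It remains to glue $R$ to the handle $P$. Here I would exploit the strands: $R$ has two endpoints $x, y \in U^\star \subseteq U$, and by definition of a mop, for the endpoint $x$ there is a strand $Q_x$ from $v$ to $x$ of length at most $s_0$ with $P \cup Q_x$ rainbow. The concatenation $P \cup Q_x \cup R$ is then a walk; it is rainbow because $C(P)$, $C(Q_x) \setminus C(P)$, and $C(R) \subseteq C'$ are chosen to be disjoint, but it need not be a \emph{path} since $R$ and $Q_x$ could share internal vertices (all lying in a bounded-size set). I would handle this exactly as in the connecting arguments of Section~\ref{sec:expansion}: reserve the $O(s_0)$ vertices and colours of $P \cup Q_x$ before running \Cref{lem:connecting lemma}, so that the rainbow path forest and its connecting paths avoid $V(P \cup Q_x)$ entirely, and moreover ask \Cref{lem:connecting lemma} to route one of the connecting segments of $R$ so that it ends at a prescribed neighbour of $x$ — or more simply, build $R$ to start at a vertex $x' \in U^\star \setminus V(P \cup Q_x)$ adjacent to $x$ in $C'$, which exists since $\deg(x, U^\star; C') \geq (1-2\sqrt\gamma)d' \gg s_0$, and prepend the edge $xx'$. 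The resulting object $P \cup Q_x \cup \{xx'\} \cup R$ is a genuine rainbow path of length at least $\ell + (1 - \eps/2)d' \geq \ell + (1-\eps/2)(d - \ell) \geq (1 - \eps/2)d \geq (1-\eps)d$, completing the proof.

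The main obstacle I anticipate is not any single lemma invocation but the bookkeeping in the gluing step: one must make the disjointness between $C(P \cup Q_x)$ and $C'$, between $V(P \cup Q_x)$ and $V(R)$, and between the various connecting strands all hold simultaneously, while keeping the reserved sets small enough (size $O(s_0) + O(\eps^{-2}\nu^{-1})$, which is $\ll \beta |U^\star|$) that \Cref{lem:connecting lemma} still applies inside $H'$. A secondary subtlety is verifying that after restricting to colours $C'$ and then to $U^\star$ and then to the robust expander $H'$, the minimum degree has only dropped by an additive $o(d')$ rather than a multiplicative factor — this needs the cleaning threshold, the \ref{findrobustexpanderdense:2} guarantee, and the $(1 - 3p)$ loss in \ref{connectprop:2} to all be taken with room to spare, which is why the constant hierarchy is chosen with $\gamma$ much smaller than $\eps$.
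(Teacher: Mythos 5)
Your overall architecture matches the paper's: dichotomise on whether $G$ is a $(d,s_0,\gamma,\alpha)$-leaky mop expander, apply \Cref{lem:mop argument} in the expanding case, and otherwise extract a dense piece from the set $U$ of ends of a non-leaky mop, run the robust-expander/path-forest/connecting machinery inside it, and glue the resulting path to the handle through a strand $Q_u$. The gluing bookkeeping you describe is also essentially what the paper does. However, there is a genuine gap in your ``cleaning'' step, and it sits at exactly the point where the real difficulty of the theorem lives.

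You claim that iteratively discarding vertices $u\in U$ with $\deg(u,U;C')<(1-\sqrt\gamma)d'$ removes only an $O(\sqrt\gamma/\eps)$-fraction of $U$ and leaves a set $U^\star$ of minimum $C'$-degree $(1-2\sqrt\gamma)d'$. The counting does not close: a discarded vertex's $\geq\sqrt\gamma d'$ ``missing'' edges may go to \emph{previously discarded} vertices rather than to $U^c$, and the only a priori bound on the number of $C'$-edges inside the discarded set $D$ is $|D|\Delta(G)/2\leq |D|Kd/2$, which dwarfs $\sqrt\gamma d'|D|$ since $\gamma\ll\eps,1/K$. So the deletion can cascade and you get no bound on $|D|$. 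The obstruction is real, not just an artefact of the counting: since $|U|$ may be as large as $\alpha^{-1}d'$ with $\alpha\ll\gamma$, a set of only $\sqrt\gamma|U|$ vertices can contain up to $\min(\sqrt\gamma|U|,Kd)\gg d'$ of a given vertex's neighbours, so near-full minimum degree simply cannot be recovered by deleting a small fraction of $U$. The best a minimality/deletion argument yields here is minimum degree about $(1-\gamma)d'/2$ (this is what the paper proves for a minimal $U_1\subseteq U_0$ violating leakiness; note the unavoidable factor $2$, because removing a vertex converts its internal edges into boundary edges). But minimum degree $d'/2$ only gives a rainbow path of length $\approx d'/2$ inside $U$ via \Cref{lem:rainbow path forest}, hence a final path of length $\approx \ell+d'/2=d-d'/2$, which can be as small as $d/2$ when $\ell=0$. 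The paper closes this gap with a case split that your proposal is missing: if $Kd\leq \eps|U|/16$ (the ``$U$ much larger than $d$'' regime) it uses the \emph{average-degree} path-forest lemma (\Cref{lem:sparsepathforest}), since the average $C'$-degree into $U$ is $(1-O(\gamma))d'$ and the maximum degree is then tiny relative to $|U|$; if $Kd\geq\eps|U|/16$, a one-shot deletion of the at most $\sqrt{3\gamma d'|U|}=o(d')$ vertices with boundary degree $\geq\sqrt{3\gamma d'|U|}$ does recover minimum degree $(1-o(1))d'$, and then \Cref{lem:rainbow path forest} applies. Without this dichotomy (or some substitute for it), your argument does not produce a path of length $(1-o(1))d'$ inside $U$, and the theorem does not follow.
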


\begin{proof}
We may assume that $\eps, 1/K \ll 1$. Let $1/d \ll \nu \ll \tau \ll \alpha \ll 1/s_0 \ll \gamma \ll \eps, 1/K \ll 1$. 

Suppose towards a contradiction that $G$ contains no rainbow path of length at least $(1-\eps)d$. By \Cref{lem:mop argument}, $G$ is not a $(d, s_0, \gamma, \alpha)$-leaky mop expander.

The following claim takes a non-leaky mop in $G$ and applies our pass-to-expander lemma (\Cref{lem:find robust expander}) inside of its set of ends. Technically, in order to be able to apply it and to maintain the failure of expansion (our expander might be substantially smaller than the initial set) some care is needed, and this is where the additional technical property~\ref{findrobustexpanderdense:3} will be necessary. We include \Cref{fig:non-expanding} to help with following the argument.

\begin{claim*}
    There exists an $(\ell, s, n')$-mop $(P, v, U, \{Q_u\}_{u \in U})$ satisfying the following properties with $d':=d-\ell$ and $C':=C(G) \setminus C(P)$:
    \stepcounter{propcounter}
    \begin{enumerate}[label = {{{\normalfont\textbf{\Alph{propcounter}\arabic{enumi}}}}}]
        \item 
        $\ell \leq (1-\eps)d$, $s \leq s_0$, and $n' = |U| \leq \alpha^{-1}d'$;
        \label{notleakycase:1}
        \item $e(U,U^c; C') < 3\gamma d'n'$;
        \label{notleakycase:2}
        \item the graph $G' := G[U; C']$ is a $(\nu, \tau)$-robust expander;
        \label{notleakycase:3}
        \item $\delta(G') \geq (1-3\gamma)d'/2$.
        \label{notleakycase:4}
    \end{enumerate}
\end{claim*}

\begin{cla_proof}
    Since $G$ is not a $(d, s_0, \gamma, \alpha)$-leaky mop expander, there exists an $(\ell,s,t)$-mop $(P, v, U_0, \{Q_u\}_{u \in U_0})$ with $s \leq s_0$, $t=|U_0| \leq \alpha^{-1}d'$, and $e(U_0,U_0^c; C') < \gamma d'|U_0|$, where $C' := C(G) \setminus C(P)$ and $d' := d-\ell$. 
    We have assumed that $G$ has no rainbow path of length at least $(1-\eps)d$, so we have $\ell \leq (1-\eps)d$ as well. 
    
    Since we only have control of the number of edges with colours in $C'$ leaving $U_0$, we currently only control the average degree in $G[U_0;C']$, and in order to apply our pass-to-expander lemma (\Cref{lem:find robust expander}) we need control on the minimum degree. So we first perform a ``clean-up'' step by passing into a minimal subset $U_1 \subseteq U_0$ with $e(U_1, U_1^c; C') < \gamma d'|U_1|$. 
    This guarantees that 
    \[\delta(G[U_1;C'])\geq (1-\gamma)d'/2 \geq \alpha t/4 \geq \alpha |U_1|/4.\] Indeed, if there existed a vertex $u \in U_1$ whose degree in the graph $G[U_1;C']$ was less than  $(1-\gamma)d'/2$, then $W:=U_1\setminus \{u\}$ would be a smaller set with the same property, indeed in this case $W$ satisfies
    \begin{align*} e(W, W^c; C')&=e(U_1, U_1^c;C')-\deg(u,U_1^c;C')+\deg(u,U_1;C') \\
    &=e(U_1, U_1^c;C')-\deg(u;C')+2\deg(u,U_1;C') \\
    &< \gamma d'|U_1|-d'+(1-\gamma)d'\\
    &=\gamma d'|W|,    
    \end{align*}
    which would be a contradiction to the minimality of $U_1$. Now, recalling that $\nu \ll \tau, \alpha, \gamma$, we can apply \Cref{lem:find robust expander} to the symmetric digraph associated to $G[U_1; C']$ with $\gamma \alpha$, $\alpha/4$ 
    in place of $\delta, \alpha$ and with $w(v) := \deg(v,U_1^c; C')$ to obtain a nonempty induced subgraph $G'\subseteq G[U_1; C']$ which satisfies \ref{findrobustexpanderdense:1}-\ref{findrobustexpanderdense:3}; that is, $G'$ is a robust $(\nu, \tau)$-expander, every vertex $v\in U:=V(G')$ satisfies \begin{equation}\label{eq:expander degree loss}
    \deg_{G'}(v)\geq \deg_{G[U_1;C']}(v) - \gamma \alpha|U_1| \geq (1-\gamma)d'/2 - \gamma d' = (1-3\gamma)d'/2,
    \end{equation}
    and
    \begin{equation}\label{eq:relative-weight}
        e(U, U_1^c; C') = \sum_{v \in U} w(v) \leq 2\frac{|U|}{|U_1|}\sum_{v \in U_1} w(v) = 2\frac{|U|}{|U_1|}e(U_1, U_1^c; C')< 2\gamma d' |U|.
    \end{equation}

    Now, defining $n' := |U| \leq t$, the $(\ell,s,n')$-mop $(P, v, U, \{Q_u\}_{u \in U})$ satisfies \ref{notleakycase:1}, \ref{notleakycase:3}, and \ref{notleakycase:4}. We now verify \ref{notleakycase:2}. It will follow since \eqref{eq:expander degree loss} guarantees that vertices of $G'$ have very similar degrees as they did in $G[U_1; C']$, and the property~\ref{findrobustexpanderdense:3} with our choice of weight function gave us control over the number of edges escaping to $U_1^c$, as seen in \eqref{eq:relative-weight}.
    Indeed, any vertex $u\in U$ sends at most $\deg_{G[U_1;C']}(u)-\deg_{G'}(u)\leq \gamma \alpha|U_1|\leq \gamma d'$ edges to $U_1\setminus U$ in $C'$ colours by \eqref{eq:expander degree loss}, so combined with \eqref{eq:relative-weight} we have
    $$e(U, U^c;C') = e(U, U_1^c; C') + e(U, U_1 \setminus U;C') < 2\gamma d'|U|+\gamma d'|U| = 3\gamma d'n'.$$
    Thus \ref{notleakycase:2} holds as well. This proves the claim.
\end{cla_proof}

\vspace{-0.3cm}
\begin{figure}[h]
    \centering
    \includegraphics[width=0.7\linewidth]{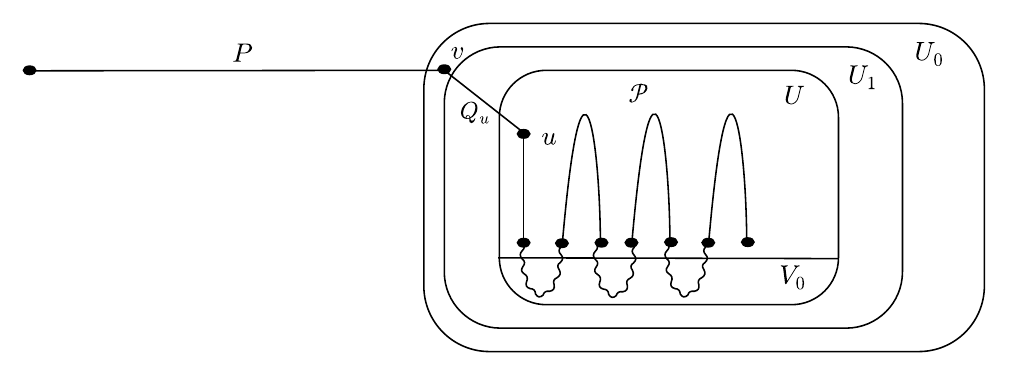}
    \caption{Illustration of the argument in the non-expanding case. $U_0$ is the initial set of ends of a mop witnessing $G$ not being a suitable leaky mop expander. This guarantees the graph $G[U_0;C']$ has average degree close to $d'$. $U_1$ is a slightly smaller subset in which we regain some control on the minimum degree and $U \subseteq U_1$ is then obtained by applying the pass-to-expander lemma (\Cref{lem:find robust expander}) to $G[U_1;C']$. We then reserve random robustly connected subsets $V_0$ of $U$ and $C_0$ of $C$ (using expansion by means of \Cref{lem:connecting lemma}), find a linear path forest not using these vertices or colours with size close to $d'$, and finally use short paths through $V_0$ using colours from $C_0$ to link up the paths.}
    \label{fig:non-expanding}
\end{figure}

A keen-eyed reader might wonder why we are not just able to apply \Cref{lem:long path in robust expander} to $G'$ to find a rainbow path of length $(1-\eps)d'$ which we can append to the handle and one of the strands to get our desired rainbow path. There are two main reasons for this. First, we need to take care to make sure that the new path does not reuse vertices or colours from the connecting strand. Second, \Cref{lem:long path in robust expander} will only give us a rainbow path of length $(1-\eps)\delta(G')$, and we have only managed to maintain the minimum degree in~$G'$ up to roughly a factor of~$1/2$ from $d'$. To deal with both of these issues, we will instead retrace the proof of \Cref{lem:long path in robust expander} by hand: randomly sampling our robust expander $G'$ to create a connecting reservoir, finding a large rainbow path forest with few components, then linking up the components through the reservoir in such a way so as to avoid the vertices and colours of a designated strand. To overcome the minimum degree deficiency when finding our rainbow path forest, we make use of both rainbow path forest lemmas from \Cref{sec:pathforests}. The first (\Cref{lem:sparsepathforest}) can only be applied if $|V(G')|$ is much larger than its maximum degree, but in this case, it gives a rainbow path forest of size close to the \emph{average} degree of $G'$. The second (\Cref{lem:rainbow path forest}) has no such maximum degree requirement, but it only gives a rainbow path forest of size close to the \emph{minimum} degree of $G'$. Fortunately, the property~\ref{notleakycase:2} implies that the average degree of $G'$ is still close to $d'$. If $|V(G')|$ is large enough, then we can apply \Cref{lem:sparsepathforest} directly, and if not, then we can boost the minimum degree by passing to a further subgraph of $G'$, then apply \Cref{lem:rainbow path forest} to obtain our rainbow path forest.

We now proceed with this plan by invoking \Cref{lem:connecting lemma} to sample our reservoir. Let $p := \gamma$ and $\beta := p^3 \nu/100$. Since $n' \geq (1-3\gamma)d'/2 \geq \eps d/4$ by \ref{notleakycase:1} and \ref{notleakycase:4}, 
and $1/d \ll \nu, \gamma, \eps$,
we have $p^3\nu^2 n' \geq 144 \log n'$.
We also have $\delta(G') \geq (1-3\gamma)d'/2 \geq \alpha n'/4$ by \ref{notleakycase:1} and \ref{notleakycase:4}, and $\nu + \tau \leq \alpha/4$ since $\nu \leq \tau \ll \alpha$, so we can apply \Cref{lem:connecting lemma} to the symmetric digraph associated to $G'$ with $\alpha/4$, $n'$ in place of $\alpha$, $n$, respectively, to find $V_0 \subseteq V(G')$ and $C_0 \subseteq C'$ satisfying \ref{connectprop:1}-\ref{connectprop:3}. Set $G'' := G'[V(G') \setminus V_0; C' \setminus C_0]$. 
Note that by \ref{connectprop:1} and \ref{connectprop:2}, 
\begin{equation}\label{eq:properties of G''}
    n'' := |V(G'')| \geq (1-2\gamma)n' \quad \quad \text{ and } \quad \quad \deg_{G''}(v) \geq (1-3\gamma)\deg_{G'}(v) \:\: \text{for every } v \in V(G'').
\end{equation}

\begin{claim*}
    There exists a rainbow path forest $\mathcal P$ in $G''$ with at most $144\eps^{-2}$ components and at least $(1-\eps/2)d'$ edges.
\end{claim*}
\begin{cla_proof}
    We consider two cases. If $Kd \leq \eps n''/16$, then $G''$ has maximum degree at most $\eps n''/16$ and average degree at least 
    \begin{align*}
        \frac{1}{n''}\sum_{v \in V(G'')} \deg_{G''}(v) &\geq \frac{1}{n''}\sum_{v \in V(G'')} (1-3\gamma)\deg_{G'}(v) \\
        &= \frac{1}{n''}\sum_{v \in V(G'')} (1-3\gamma) (\deg_{G}(v;C') - \deg_G(v, U^c; C')) \\
        &\geq (1-3\gamma)d' - \frac{(1-3\gamma)(3\gamma d'n')}{n''} \\
        &\geq (1-3\gamma)d' - \frac{(1-3\gamma)(3\gamma d')}{1-2\gamma} \geq (1-6\gamma)d'
    \end{align*}
    by \eqref{eq:properties of G''}, \ref{notleakycase:2}, and the fact that every vertex $v \in V(G)$ satisfies $\deg_G(v;C') \geq d - |C(P)| = d'$. Thus we can apply \Cref{lem:sparsepathforest} to $G''$ with $\eps/4$, $(1-6\gamma)d'$ in place of $\eps$, $d$ to obtain a rainbow path forest with at most $\lceil \log_{8/7} (\eps/4)^{-1} \rceil \leq 144\eps^{-2}$ components and at least $(1-\eps/4)(1-6\gamma)d' \geq (1-\eps/2)d'$ edges, as desired.

    On the other hand, if $K d \geq \eps n''/16$, then we can find a subgraph $G'''$ of $G''$ with minimum degree close to $d'$ as follows. Let $S$ be the set of vertices $u \in V(G'')$ with $\deg_G(u, U^c; C') \geq \sqrt{3\gamma d' n'}$. Then, because $V(G'') \subseteq U$ and $e_G(U,U^c; C') \leq 3\gamma d'n'$ by \ref{notleakycase:2}, we must have that $|S| \leq \sqrt{3\gamma d'n'}$. Recall that $n'' \geq (1-2p)n' \geq n'/2$ and $d' \geq \eps d$, so for $\gamma' := \sqrt{96\gamma \eps^{-2} K}$, we have 
    \[|S| \leq \sqrt{3\gamma d'n'} \leq \sqrt{6\gamma d' n''} \leq \gamma' d',\]
    where in the final inequality we used $Kd \geq \eps n''/16$.

    Also note that $\gamma \leq \gamma' \ll \eps$ since we chose $\gamma \ll \eps, 1/K \ll 1$. Deleting vertices of $S$ from $V(G'')$, we obtain $G''':=G''\setminus S$ with minimum degree at least $(1-5\gamma')d'$. Indeed, for any vertex $v\in V(G''')$, we have that 
    \[\deg_{G'}(v) = \deg_G(v;C') - \deg_G(v, U^c; C') \geq d' - \sqrt{3\gamma d' n'} \geq (1-\gamma')d'\]
    since $v \notin S$, so by \eqref{eq:properties of G''}, we are sure to have 
     $\deg_{G''}(v)\geq (1-3\gamma)(1-\gamma')d'\geq (1-4\gamma')d'$ since $\gamma \leq \gamma'$. Finally, $v$ can send at most $|S| \leq \gamma'd'$ edges to $S$, so we are left with $\deg_{G'''}(v)\geq (1-5\gamma')d'$.
     
     Now $\delta(G''') \geq (1-5\gamma')d' \geq 9(\eps/4)^{-3}$ since $d' \geq \eps d$, $1/d \ll \eps$, and $\gamma' \ll 1$. Therefore, we can apply \Cref{lem:rainbow path forest} to the symmetric digraph associated to $G'''$ with $\eps/4$ in place of $\eps$ to obtain a rainbow path forest $\mathcal{P}$ with at most $9(\eps/4)^{-2} = 144\eps^{-2}$ components and at least $(1-\eps/4)(1-5\gamma')d'\geq (1-\eps/2)d'$ edges since $\gamma' \ll \eps$, as required. This proves the claim.
\end{cla_proof}

Now that we have our reservoir sets $V_0$ and $C_0$ and our rainbow path forest $\mathcal P$, we need to make sure that after connecting the components, the path we obtain has an endpoint whose strand does not use the same vertices or colours as the path.
We deal with this by simply designating the target endpoint now and removing the colours and vertices of its strand from $\mathcal P$. We will also need to use the robust connectivity property~\ref{connectprop:3} of the reservoir to join the remaining components while avoiding the strand.

So let us fix a vertex $u$ from the endpoints of $\mathcal{P}$. By deleting all edges with colours in $C(Q_u)$ and all vertices in $V(Q_u) \setminus \{u\}$ from $\mathcal{P}$, we obtain a rainbow path forest $\mathcal{P}'$ whose colours are disjoint from $C(P\cup Q_u)$, with $V(\mathcal{P'})\cap V(P\cup Q_u)= \{u\}$. Note that each vertex or colour we delete from $\mathcal P$ creates at most one new component and deletes at most two edges. Therefore, since $Q_u$ has length at most $s_0$, $d' \geq \eps d$, and $1/d \ll 1/s_0 \ll \eps$, we see that $\mathcal P'$ has at most $144 \eps^{-2} + |C(Q_u)| + |V(Q_u)|-1 \leq 3s_0$ components and at least $(1-\eps/2)d' - 2(|C(Q_u)| + |V(Q_u)|-1) \geq (1-\eps)d'$ edges.

So far, we have a rainbow path $P \cup Q_u$ of length at least $\ell$ and a rainbow path forest $\mathcal P'$ with at least $(1-\eps)d'$ edges which are colour-disjoint and intersect only at the shared endpoint $u$. We now join the components of $\mathcal P'$ via short, internal-vertex-disjoint, colour-disjoint, rainbow paths using vertices from $V_0$ and colours from $C_0$ to obtain a single rainbow path of length at least $\ell + (1-\eps)d' \geq (1-\eps)d$. Note that the only vertices and colours of $P \cup Q_u \cup \mathcal P'$ that might appear in $V_0$ or $C_0$ come from the short path $Q_u$, so avoiding these when finding these connecting paths is not too difficult by \ref{connectprop:3}. We denote the paths of $\mathcal{P'}$ to be $P_1, P_2, \dots, P_m$, where $m\leq 3s_0$, and $P_i$ joins vertices $u_i$ and $v_i$, with $u_1=u$. Denote $Q_0:=Q_u$. For each $i = 1, \ldots, m-1$, in turn, we iteratively apply property \ref{connectprop:3} of \Cref{lem:connecting lemma} to $v_i$ and $u_{i+1}$, with $V_i:=V_0 \cap \cup_{j\leq i-1}V(Q_j)$ and $C_i:=C_0 \cap \cup_{j\leq i-1}C(Q_j)$ playing the respective roles of $V_1$ and $C_1$, to obtain a rainbow path $Q_i$ connecting $v_i$ and $u_{i+1}$ of length at most $\nu^{-1}+1$ whose internal vertices are in $V_0\setminus V_i$ and whose colours are in $C_0\setminus C_i$. Note that at every step, we have $|V_i|, |C_i|\leq s_0+(\nu^{-1}+1)(m-1) \leq \beta  n'$, since $\beta=\gamma^3\nu/100$, $n' \geq \delta(G') \geq (1-3\gamma)d'/2 \geq \eps d/4$, and $1/d \ll \nu, s_0, \gamma, \eps$, so the iterative application is valid. At the end we obtain a rainbow path $P\cup Q_u\cup \mathcal P' \cup \ \bigcup_{i=1}^{m-1}Q_i$ of length at least $(1-\eps)d$.
\end{proof}

\section{The rearrangement problem in general groups}\label{sec:rearrangement}

In this section, given a Cayley graph $G = \mathrm{Cay}(\Gamma,S)$ with $|S| = d$ sufficiently large, we show how to build a directed rainbow walk of length $d$ with at most $\eps d$ vertex repetitions, thereby proving Theorem~\ref{thm:weakasymptotic-intro} (restated as Theorem~\ref{thm:weakasymptoticrestated} below). We achieve this incrementally, in $O(\eps^{-1})$ steps. At a given stage $i$ in our algorithm, we have a directed rainbow walk $P_{i-1}$ and a set $S_{i-1}$ of unused colours. If the graph induced by the colour set $S_{i-1}$ has good expansion properties, then we can reach a large set $X_i$ of vertices by short (length $O(\eps^{-1})$) $S_{i-1}$-rainbow paths from the terminal vertex of $P_{i-1}$, similar to the proof of \Cref{thm:schrijver-asymptotic-intro} in \Cref{sec:mop}. We can further ensure that all of these short paths use a common colour set $C_i$ of size $o(d)$. By \Cref{lem:GreedyOnePath}, we can find a rainbow path $Q_i$ of length $\Omega(\eps d)$ in the remaining colours (possibly reusing vertices of $P_{i-1}$). By a simple double-counting argument, there exists a translate of $Q_i$ which starts at a vertex in $X_i$ and intersects $P_{i-1}$ in $o(|Q|)$ vertices. We can thus append $Q_i$ to $P_{i-1}$ to obtain the new walk $P_i$, and we repeat. If, at some stage, we don't have the necessary expansion properties to continue, then we can find a set $U$ of vertices in which $G[U;S_{i-1}]$ satisfies the hypotheses of \Cref{theorem:rainbowdirectedpathindensecase}. At this point, we can find an $S_{i-1}$-rainbow path $Q$ of length at least $(1-o(1))|S_{i-1}|$, which we can similarly translate to begin in the set $X_{i-1}$ from the previous iteration so that it only intersects $P_{i-2}$ in $o(|Q|)$ vertices. We also take care to ensure that $Q$ does not use colours from the set $C_{i-1}$ reserved for joining $P_{i-2}$ to $X_{i-1}$, so we can append $Q$ to $P_{i-2}$ to obtain a rainbow walk of length at least $(1-\eps/2)d$ with at most $\eps d/2$ vertex repetitions. Now, an arbitrary extension in the remaining $\leq \eps d/2$ colours gives the desired rainbow walk of length $d$.

\begin{theorem}\label{thm:weakasymptoticrestated}
    For every $\eps>0$, there exists $d_0\in\mathbb{N}$ such that for all groups $\Gamma$ and all subsets $S\subseteq \Gamma$ with $|S|=d\geq d_0$, $\mathrm{Cay}(\Gamma, S)$ contains a directed rainbow walk spanning $d$ edges with at most $\eps d$ vertex repetitions.
\end{theorem}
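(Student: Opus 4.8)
The plan is to build the walk greedily over $O(\eps^{-1})$ rounds, using throughout that in $\mathrm{Cay}(\Gamma,S)$ left-multiplication by any $h\in\Gamma$ is a colour-preserving automorphism: if $Q$ is a rainbow path from $x$ to $y$, then $hQ$ is a rainbow path from $hx$ to $hy$ with exactly the same colour set, so a rainbow path on currently unused colours may be translated to start at any prescribed vertex, only its endpoints moving. Writing $n:=|\Gamma|$, we first dispose of the dense case: if $d\ge\alpha n$ for a suitable constant $\alpha$ then $\mathrm{Cay}(\Gamma,S)$ is $d$-regular, hence exactly Eulerian with $\delta^{\pm}=d\ge\alpha n$, so \Cref{theorem:rainbowdirectedpathindensecase} already produces a rainbow directed path (a walk with no vertex repetitions) of length $(1-\eps)d$, and an arbitrary extension by the at most $\eps d$ remaining colours finishes. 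Hence we may assume $d=o(n)$, which is precisely what forces every linear-sized subset of $\Gamma$ to have size $\omega(d)$ and so makes the translation argument below effective. (It is harmless to prove the statement with $\eps$ replaced by a fixed small constant multiple of itself, which we do implicitly.)

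For the iteration we keep a rainbow walk $P_i$ with terminal vertex $v_i$, the set $S_i:=S\setminus C(P_i)$ of unused colours, a running count of the vertex repetitions in $P_i$, and -- whenever the current palette permits -- a set $X_i\subseteq\Gamma$ with $|X_i|=\omega(d)$ together with a small ``reservoir'' $C_i\subseteq S_{i-1}$, $|C_i|=o(d)$, such that every vertex of $X_i$ is joined to $v_{i-1}$ by a rainbow path of length $O(\eps^{-1})$ using only colours of $C_i$. Producing such $X_i$ and $C_i$ is possible whenever $\mathrm{Cay}(\Gamma,S_{i-1})$ expands sufficiently (in a mop-expander sense, cf.\ \Cref{lem:mop argument}): this is carried out by a breadth-first/broom argument as in \Cref{lem:warmup} together with the reservoir mechanism of \Cref{lem:connecting lemma}, run inside a fixed small sub-palette of $S_{i-1}$, and since $d=o(n)$ a bounded number of expansion steps already reaches $\omega(d)$ vertices. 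In a successful round $i$ we apply \Cref{lem:GreedyPathMinDegree} to the $(|S_{i-1}|-o(d))$-regular digraph $\mathrm{Cay}(\Gamma,S_{i-1}\setminus C_i)$ and truncate the resulting path to a rainbow directed path $Q_i$, with initial vertex $x$ and length $\lceil\eps d/9\rceil$ (so one round's gain in used colours is both $\Omega(\eps d)$ and $O(\eps d)$), whose colours avoid $C(P_{i-1})$ and $C_i$. Among the $|X_i|$ translates $zx^{-1}Q_i$ ($z\in X_i$), the double count
\[\sum_{z\in X_i}\bigl|V(zx^{-1}Q_i)\cap V(P_{i-1})\bigr|\;\le\;|V(Q_i)|\cdot|V(P_{i-1})|\;=\;o\bigl(|V(Q_i)|\bigr)\cdot|X_i|\]
(using $|V(P_{i-1})|\le d+1=o(|X_i|)$) produces a translate meeting $V(P_{i-1})$ in only $o(|Q_i|)$ vertices; linking its initial vertex to $v_{i-1}$ by one of the short $C_i$-coloured rainbow paths and appending yields $P_i$, which has $\Theta(\eps d)$ more used colours and only $o(d)$ more vertex repetitions. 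Iterating, after $O(\eps^{-1})$ rounds $|S_i|\le\eps d/4$, and an arbitrary extension of $P_i$ by these $\le\eps d/4$ colours gives a rainbow walk with $d$ edges and at most $o(d)+\eps d/4\le\eps d$ vertex repetitions.

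It remains to treat the first round $i$ at which this expansion fails, i.e.\ at which $\mathrm{Cay}(\Gamma,S_{i-1})$ admits no suitable reachable set from $v_{i-1}$ (if $i=1$ then $P_0$ is trivial and the path $Q$ constructed below is itself the desired walk after an arbitrary extension). Failure of expansion provides a sparse cut: a set $U$ with $|U|=O(|S_{i-1}|)$ and $\partial^+(U)+\partial^-(U)=o(|U|\,|S_{i-1}|)$; deleting from $U$ the $o(|U|)$ vertices of large boundary leaves, inside $\mathrm{Cay}(\Gamma,S_{i-1}\setminus C_{i-1})$, an almost-Eulerian induced subgraph $G'$ (almost-Eulerian because $\mathrm{Cay}(\Gamma,S_{i-1})$ is exactly Eulerian and the cut is sparse) with $\delta^{\pm}(G')=|S_{i-1}|-o(|S_{i-1}|)=\Omega(|V(G')|)$, so \Cref{theorem:rainbowdirectedpathindensecase} -- applied exactly as in the proof of \Cref{thm:main}, with its error parameter tending to $0$ with $d$ -- yields a rainbow directed path $Q$ of length $(1-o(1))|S_{i-1}|$ with colours in $S_{i-1}\setminus C_{i-1}$ (hence disjoint from $C(P_{i-2})$ and from $C_{i-1}$). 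Having just lost expansion we cannot connect $Q$ to $v_{i-1}$, so we back up one round: by the same double count we translate $Q$ to begin at some $z\in X_{i-1}$ with $|V(Q)\cap V(P_{i-2})|=o(|Q|)$, link $z$ to $v_{i-2}$ by a short $C_{i-1}$-coloured rainbow path, and append $Q$ to $P_{i-2}$. Since round $i-1$ used up only $O(\eps d)$ colours, the colours of $P_{i-2}$ and of $Q$ together cover all but $O(\eps d)$ of $S$, so this walk has length $(1-O(\eps))d$ with $o(d)$ vertex repetitions, and an arbitrary extension by the $O(\eps d)$ leftover colours gives the desired rainbow walk with $d$ edges and at most $\eps d$ vertex repetitions.

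The step I expect to be the main obstacle is sustaining this dichotomy as the palette $S_i$ shrinks round after round: one needs that at every stage $\mathrm{Cay}(\Gamma,S_i)$ either expands enough to reach $\omega(d)$ vertices by bounded-length rainbow paths through a small fixed reservoir, or else admits a sparse cut on which the argument of \Cref{thm:main} applies -- and adapting that dichotomy to the \emph{directed} Cayley setting is delicate, since from a sparse cut one must extract a genuinely almost-Eulerian, robustly expanding digraph before \Cref{theorem:rainbowdirectedpathindensecase} (equivalently \Cref{lem:find robust expander}) can be invoked, and the data $X_i,C_i$ banked in the previous successful round have to be set aside with enough slack to survive being partly spent on connections. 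The remaining ingredients -- the single good translate, the greedy $\Theta(\eps d)$-path, and the accounting of $o(d)$ repetitions over $O(\eps^{-1})$ rounds -- are routine given the machinery of Sections~\ref{sec:pathforests},~\ref{sec:expansion} and~\ref{sec:dense}.
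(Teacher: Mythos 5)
Your proposal follows essentially the same route as the paper's proof: the same iterative walk-building with an expansion dichotomy, the same small reserved colour set $C_i$ and reachable set $X_i$, the same left-translation double-count to place the greedily-found path $Q_i$ at a vertex of $X_i$ meeting $P_{i-1}$ rarely, and the same back-up-to-$P_{i-2}$ device when expansion fails, at which point the paper likewise extracts an almost-regular dense induced subgraph from the resulting sparse cut and invokes \Cref{theorem:rainbowdirectedpathindensecase}. The one place your quantifiers overshoot is the claim that a bounded number of expansion steps from $v_{i-1}$ reaches $\omega(d)$ vertices — constant-factor growth over $O(1)$ steps only yields $Kd$ vertices for a constant $K$, and the paper's $(\theta,k,\ell)$-expanding sets accordingly only guarantee $kd$ reachable vertices; this suffices, because the double count then bounds each round's new repetitions by $|V(Q_i)|\,d/(kd)=|V(Q_i)|/k$ rather than $o(|V(Q_i)|)$, and summing over the $O(\eps^{-1})$ rounds still gives at most $\eps d/2$ repetitions once $k$ is chosen large in terms of $\eps$.
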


\begin{proof}

We choose constants $\gamma', \gamma, \alpha, \theta, \delta$ and $\ell,k$ so that $1/n \leq 1/d \ll 1/\ell  \ll  \gamma'\ll \gamma \ll \alpha \ll \theta,  1/k,  \delta \ll \eps$. Let $S$ be a subset of size $d$ of a group $\Gamma$ of order $n$, and let $G := \mathrm{Cay}(\Gamma,S)$. Since we are aiming for an asymptotic result, we may assume that $S$ does not contain the identity element. 

Before we begin, we note that it suffices to find a rainbow directed walk $P$ of length at least $(1-\eps/2)d$ with at most $\eps d/2$ vertex repetitions. Then, because every vertex has an out-neighbour in every colour from $S$, we can greedily extend $P$ to a rainbow walk of length $d$ with at most $\eps d$ vertex repetitions. 

\par \textit{\textbf{The strategy.}} We attempt to iteratively create rainbow walks $P_0\subseteq P_1\subseteq\cdots$ such that each $P_{i-1}$ is the initial segment of $P_i$. We also ensure that for each $i \geq 0$, we have $\delta d \leq |C(P_i)| - |C(P_{i-1})| \leq \lceil \delta d \rceil + \ell$
and $|V(P_i)| \geq |C(P_i)| -  2i\delta d/k - i\ell + 1$. We terminate this process once we have $|C(P_{i})| \geq (1-\eps/2)d$, unless our procedure fails before that point. Thus we will always have $i \leq \delta^{-1}$. At step $i=0$, let $v_0$ be an arbitrary vertex of $V(G)$ and $P_0$ be an empty walk with a single vertex $v_0$. For all $i$, $v_0$ will be the initial vertex of $P_i$, and we will denote the terminal vertex of $P_i$ by $v_i$. Suppose we are at step $i$ and that we have constructed $P_0 \subseteq \cdots \subseteq P_{i-1}$. If we have that $|C(P_{i-1})| \geq (1-\eps/2)d$, then we are already done with $P = P_{i-1}$, as the number of vertex repetitions of $P_{i-1}$ is only
\[|C(P_{i-1})| + 1 - |V(P_{i-1})| \leq 2d/k + \delta^{-1} \ell \leq \eps d/2,\]
since $1/d \ll 1/\ell, 1/k, \delta \ll \eps$. Otherwise, we have $|S \setminus C(P_{i-1})| \geq \eps d/2$, and we attempt to construct $P_i$ as follows.

\par \textit{\textbf{The inductive step.}} We say that a colour set $C \subseteq S \setminus C(P_{i-1})$ is $(\theta,k,\ell)$-\emph{expanding} if $|C| \leq \theta d$, and there exist at least $kd$ distinct vertices which are the terminal vertex of some $C$-rainbow walk of length at most $\ell$ from $v_{i-1}$.

\par \textit{\textbf{Case 1: There exists an expanding colour set.}} If there exists a $(\theta,k,\ell)$-expanding colour set $C_i \subseteq S \setminus C(P_{i-1})$, let $X_i$ be the set of at least $kd$ vertices guaranteed by the definition of $(\theta,k,\ell)$-expanding. Because $|S \setminus (C(P_{i-1}) \cup C_i)| \geq \eps d/2 - \theta d \geq 2\lceil \delta d \rceil$ 
since $\delta, \theta \ll \eps$, we can, by \Cref{lem:GreedyPathMinDegree}, find a rainbow path $Q_i$ of length $\lceil \delta d \rceil$ starting at the identity element $e$ using colours from $S \setminus (C(P_{i-1}) \cup C_i)$.  

Next, we will show that we can translate this path so that it starts at some vertex in $X_i$ without intersecting $P_{i-1}$ too much. 

\begin{claim*} There exists $x_i\in X_i$ such that $|x_iV(Q_i)\cap V(P_{i-1})| \leq 2\delta d/k$. 
\end{claim*}

\begin{cla_proof}
We count the number $s$ of triples $(x,u,v) \in X_i \times V(Q_i) \times V(P_{i-1})$ such that $xu = v$ in $\Gamma$. Clearly 
\[s \leq |V(Q_i)| |V(P_{i-1})| \leq 2\delta d^2\]
since $|V(Q_i)| \leq 1 + \lceil \delta d \rceil \leq 2\delta d$ and $|V(P_{i-1})| \leq d$. On the other hand, if the claim does not hold, then 
\[s > (2\delta d/k)|X_i| \geq 2\delta d^2\]
since $|X_i| \geq kd$, so we have a contradiction.
\end{cla_proof}

Let $x_i$ be as in the claim, and let $L_i$ be a $C_i$-rainbow walk of length at most $\ell$ from $v_{i-1}$ to $x_i$. We take $P_i$ to be $P_{i-1}\cup L_i\cup (x_iQ_i)$. By design, $P_{i-1}$, $L_i$, and $x_iQ_i$ are colour-disjoint, so $P_i$ is a rainbow walk with initial segment $P_{i-1}$. Moreover, $|C(P_i)| - |C(P_{i-1})| = |C(L_i)| + |C(Q_i)|$ is between $\delta d$ and $\lceil \delta d \rceil + \ell$, and
\begin{align*}
    |V(P_i)| &\geq |V(P_{i-1})| + |V(Q_i)| - |x_iV(Q_i) \cap V(P_{i-1})| \\
    &\geq |C(P_{i-1})| - 2(i-1)\delta d/k - (i-1)\ell + 1 + |C(Q_i)| - 2\delta d/k \\
    &= |C(P_i)| - |C(L_i)| - 2(i-1)\delta d/k - (i-1)\ell + 1 - 2\delta d/k \\
    &\geq |C(P_i)| - 2i\delta d/k - i \ell + 1,
\end{align*}
as required.

\textit{\textbf{Case 2: No colour set expands.}} Thus we can assume that at some step $i$, our process fails because there exists no $(\theta,k,\ell)$-expanding set $C \subseteq S \setminus C(P_{i-1})$. We will use this fact to find an induced subgraph of our digraph which is dense in a large colour subset of 
\[S' := \left \{\begin{array}{l l}
S & \text{if $i=1$,} \\
S \setminus (C(P_{i-1}) \cup C_{i-1}) & \text{if $i \geq 2$}.
\end{array}\right.\]

Recall from the proof sketch that our goal here is to make a reduction to the dense case, and hence we wish to find a subgraph $G'$ to which we can apply \Cref{theorem:rainbowdirectedpathindensecase}.

\textit{\textbf{Finding $G'$, a dense instance of the problem.}} Recall that $v_{i-1}$ is the terminal vertex of $P_{i-1}$ and that 
\[|S'| \geq |S \setminus C(P_{i-1})| - \theta d \geq \eps d/2 - \theta d \geq \theta d/2\]
since $\theta \ll \eps$. So there exists a set $Z$ consisting of $\lceil \theta d/2 \rceil$ out-neighbours $w$ of $v_{i-1}$ for which $C(v_{i-1},w)\in S'$.  Let $C'$ be the set of colours used on these edges.  If there exists a colour $c \in S \setminus (C(P_{i-1}) \cup C')$ with at least $\gamma'd$ $c$-edges coming out of $Z$, add the terminal vertices of those edges to $Z$ and add $c$ to $C'$. Repeat this process as long as possible. Note that the size of $Z$ cannot ever exceed $kd$. If it did, then this would first occur after at most $kd/(\gamma' d) + 1 \ll \ell$ iterations, so we would have $|C'| \leq \lceil \theta d/2 \rceil + \ell \leq \theta d$ since $1/d \ll 1/\ell, \theta$. Also every vertex in $Z$ would be reachable via a $C'$-rainbow walk of length at most $\ell$ from $v_{i-1}$. Therefore, $C'$ would be $(\theta,k,\ell)$-expanding, a contradiction. Thus, when this process terminates, we have that $\theta d/2 \leq |Z| \leq kd$, that $|C'| \leq \lceil\theta d/2 \rceil + kd/(\gamma' d) \leq \theta d$ since $1/d \ll \gamma', \theta, 1/k$, and that every colour in $C'':= S' \setminus C'$ has at most $\gamma' d$ edges going from $Z$ to $Z^c$. The size of $C''$ is at least $|C''| \geq d -\theta d$ if $i=1$ since $|C'| \leq \theta d$. If $i \geq 2$, then we have $|C''| \geq \eps d/2 - 2\theta d \geq \eps d/4$ since $|C_{i-1}|\leq \theta d$, and $|S\setminus C(P_{i-1})|\geq \eps d/2$, and since $\theta \ll \eps$. From this, we can deduce a stronger lower bound on $|Z|$ than $\theta d/2$; because $\gamma' d \leq \theta d/4 \leq |Z|/2$, we have
\[\eps d|Z|/8 \leq |C''|(|Z| - \gamma' d) \leq e(G[Z;C'']) \leq |Z|^2,\]
and therefore $|Z| \geq \eps d/8$.

Define $r := \sqrt{\gamma'} d$. Recall that for each $c \in C''$, the colour class of $c$ in $G$ is a $1$-factor with at most $\gamma' d$ $c$-edges from $Z$ to $Z^c$, so there are also at most $\gamma' d$ $c$-edges from $Z^c$ to $Z$. Consequently, the set $Y$ of vertices in $Z$ with in-degree or out-degree below $|C''| - r$ in $G[Z;C'']$ has size at most
\[|Y| \leq \frac{2\gamma' d|C''|}{r} = 2\sqrt{\gamma'}|C''| \leq 2r.\]

\textit{\textbf{Verifying the density properties of $G'$. }} Now $G' := G[Z \setminus Y;C'']$ is the dense subgraph we are looking for. In particular, $G'$ satisfies the following properties. Let $n' := |V(G')|$. First,
\[kd \geq n' = |Z| - |Y| \geq \left(\eps/8 - 2\sqrt{\gamma'}\right)d \geq \eps d/16\]
since $\gamma' \ll \eps$. Consequently, 
\[\delta^\pm(G') \geq |C''| - 3r \geq |C''| - 48\left(\sqrt{\gamma'}/\eps\right) n' \geq |C''| - \gamma n' \geq \Delta^\pm(G') - \gamma n'\]
since $\gamma' \ll \gamma,\eps$. 
Also, the size of $C''$ is at least
\[|C''| \geq \eps d/4 \geq \eps n'/(4k) \geq \alpha n' + \gamma n'\]
since $\gamma, \alpha \ll 1/k,\eps$, and so $\delta^\pm(G') \geq \alpha n'$.

\textit{\textbf{Appending a large rainbow path of $G'$ to $P_{i-1}$}}. Since $\gamma \ll \alpha, \eps$, we can apply \Cref{theorem:rainbowdirectedpathindensecase}
with $\eps/4$, $n'$, $G'$ 
in place of $\eps$, $n$, $G$, respectively, to obtain a $C''$-rainbow directed path $Q$ of length at least $(1-\eps/4)\delta^\pm(G') \geq (1-\eps/4)(|C''| - \gamma n')$ in $G'$. If $i=1$, then we can take $P=Q$, which is already a rainbow walk of length at least 
\[(1-\eps/4)((1-\theta) d - \gamma n') \geq (1-\eps/4)(1 - \theta - \gamma k)d \geq (1-\eps/2)d\]
with no vertex repetitions, as $\theta \ll \eps$, and $\gamma \ll 1/k, \eps$.

Otherwise we have $i \geq 2$, and we can append a translate of $Q$ to the walk $P_{i-2}$ found previously. Let $w$ be the initial vertex of $Q$. Recall that by construction, there exists a set $X_{i-1}$ of at least $kd$ vertices such that each vertex $x \in X_{i-1}$ is reachable from $v_{i-2}$ by a $C_{i-1}$-rainbow path of length at most $\ell$.
\begin{claim*}
    There exists $x \in X_{i-1}$ such that $|x w^{-1} V(Q) \cap V(P_{i-2})| \leq \eps d/4$.
\end{claim*}
\begin{cla_proof}
    Similar to the previous claim, we count the number $s$ of triples $(x,u,v) \in X_{i-1} \times V(Q) \times V(P_{i-2})$ such that $xw^{-1}u = v$ in $\Gamma$. Recall that $Q$ and $P_{i-2}$ are both nonempty, rainbow, and colour-disjoint, so each has at most $d-1$ edges, hence at most $d$ vertices. Thus
    \[s \leq |V(Q)||V(P_{i-2})| \leq d^2.\]
    On the other hand, if the claim does not hold, then
    \[s > (\eps d/4)|X_{i-1}| \geq \eps kd^2/4 \geq d^2\]
    since $|X_{i-1}| \geq kd$ and $1/k \ll \eps$, a contradiction.
\end{cla_proof}
Now, we let our walk be $P := P_{i-2}\cup L \cup (xw^{-1}Q)$, where $L$ is a $C_{i-1}$-rainbow path of length at most $\ell$ joining $v_{i-2}$ to $x$. Note that $C(P_{i-2})$, $C_{i-1}$, and $C''$ are disjoint colour sets by design, so $P$ is indeed a rainbow walk. Also note that the length of $P$ is at least
\begin{align*}
    &|C(P_{i-2})|+(1-\eps/4)(|C''| - \gamma n') \\
    &\geq |C(P_{i-1})|-\ell - \lceil \delta d \rceil + (1-\eps/4)(d-|C(P_{i-1})|-2\theta d - \gamma k d) \\
    &\geq (1-\eps/2)d
\end{align*}

since $1/d \ll 1/\ell, \theta, \delta \ll \eps$, and $\gamma \ll 1/k, \eps$.
Finally, $P$ has at most 
\begin{align*}
    |C(P)| - |V(P)| + 1 &\leq |C(P)| - |V(P_{i-2})| - |V(Q)| + |xw^{-1}V(Q) \cap V(P_{i-2})| + 1 \\
    &\leq |C(P_{i-2})| - |V(P_{i-2})| + 1 + |C(L)| + |C(Q)| - |V(Q)| + \eps d/4 \\
    &\leq 2(i-2)\delta d/k + (i-2)\ell + \ell + \eps d/4 \\
    &\leq 2d/k + \delta^{-1}\ell + \eps d/4 \leq \eps d/2
\end{align*}
vertex repetitions since $1/d \ll 1/\ell, 1/k, \delta \ll \eps$.
\end{proof}

\textbf{Remark.} The proof above uses only basic properties of groups and can be easily adapted to any vertex-transitive digraph with a vertex-transitive proper edge colouring; i.e., properly edge-coloured digraphs with the property that for any two vertices $u$ and $v$, there is a colour-preserving automorphism of the digraph mapping $u$ to $v$.

\section{Concluding remarks}\label{sec:concluding}
\par \textbf{Rearrangeable subsets in groups.} In \Cref{thm:summary}(b), we showed that dense coloured Cayley graphs contain directed rainbow paths with a length that is asymptotically best possible. Proving a similar result without a density assumption would be of great interest.

\begin{problem}\label{prob:mainconc}
    For any group $\Gamma$ of order $n$ and any subset $S\subseteq \Gamma$ of size $d$, show that there exists a subset $S' \subseteq S$ of size $d-o(d)$ which is rearrangeable.
\end{problem}
\par Problem~\ref{prob:mainconc} is already open for cyclic groups $\mathbb{Z}_p$ of prime order. In that case, \Cref{thm:summary}(c) finds the desired subset when $d\geq p^{3/4+o(1)}$. By results of \cite{BederdKravitz}, the entire set $S$ is rearrangeable as long as $d\leq e^{(\log p)^{1/4}}$ and $0 \notin S$. We know how to improve the first result to lower density, namely $d \geq p^{2/3 + o(1)}$, with a more involved argument which we chose to omit in this paper for the sake of brevity. However, our methods meet a natural barrier at roughly $d\sim p^{1/2}$, as then the diameter of $\mathrm{Cay}(\mathbb Z_p, S)$ can be larger than the total number of colours available, so a lemma analogous to \Cref{lem:connecting lemma} cannot hold. 

\par \textbf{Rainbow walks in regular digraphs.} Regarding general coloured $d$-regular digraphs, our current methods appear to be too weak to give a positive answer to \Cref{problem:directed}. We pose the following relaxation as a more approachable open problem, with Theorem~\ref{thm:weakasymptotic-intro} already giving a positive answer for coloured Cayley graphs.

\begin{problem}\label{problem:directedwalks} Let $G$ be a $d$-regular digraph properly edge-coloured with $d$ colours. Does $G$ contain a rainbow walk with $d-o(d)$ distinct vertices?
\end{problem}

\par \textbf{Rainbow paths of length asymptotic in minimum degree.} On the other hand, in proving the asymptotic results \Cref{thm:main} and \Cref{theorem:rainbowdirectedpathindensecase}, we were able to drop the condition on the number of colours and relax the regularity condition to some extent. We wonder if a minimum out-degree assumption is enough to guarantee similar results.

\begin{problem}\label{problem:directedmindegree} Let $G$ be  properly edge-coloured digraph with minimum out-degree $d$. Does $G$ contain a rainbow path of length $d-o(d)$?
\end{problem}

Note that \Cref{lem:GreedyPathMinDegree} gives a rainbow path of length at least $d/2$ under these conditions, and we know of no improvement to this simple bound for directed graphs. In the undirected setting, Johnston, Palmer, and Sarkar \cite{johnston2016rainbow} showed that any properly edge-coloured graph of minimum degree $d$ contains a rainbow path of length at least $2d/3$. Chen and Li \cite{chen2005long} considered the more general setting of graphs with arbitrary edge colourings in which every vertex is incident to edges with at least $d$ distinct colours. They proved that in such a regime, it is always possible to find a rainbow path of length at least $2d/3 + 1$ in general (see \cite{babu2015heterochromatic}), and at least $d-1$ if $d \leq 7$. They went on to conjecture that a rainbow path of length $d-1$ always exists. Even proving a bound of $d-o(d)$ in either of these undirected settings would be interesting.

\textbf{Rainbow Tur\'an numbers of paths.} We may also consider analogous problems for \textit{average degree}. Specifically, given $d > 0$, what is the largest integer $f(d)$ such that every properly edge-coloured graph with average degree at least $d$ contains a rainbow path of length $f(d)$? This is equivalent to the so-called \emph{rainbow Tur\'an problem} for paths, formally introduced in 
the influential paper by Keevash, Mubayi, Sudakov and Verstra\"ete \cite{KMSV}. In the uncoloured setting, Erd\H{o}s and Gallai~\cite{gallai1959maximal} showed that any graph with average degree $d$ contains a path of length at least $d$, and this is best possible when $d$ is an integer by considering a disjoint union of cliques of size $d+1$.
In comparison, no tight bound is known for $f(d)$ despite considerable effort in this direction~\cite{halfpap2021rainbow, halfpap2022rainbow, johnston2016rainbow,ergemlidze2018rainbow, rombach-johnston}. Note that \Cref{lem:GreedyOnePath} gives an easy bound of $f(d) \geq d/4$. The current best known lower bound is $f(d) \geq 7d/18 - O(1)$ by Ergemlidze, Gy\H{o}ri and Methuku~\cite{ergemlidze2018rainbow}, and the best upper bound is $f(d) \leq \lceil d \rceil - 1$ for $d > 2$ by Johnston and Rombach~\cite{rombach-johnston}, coming from the coloured Cayley graph $\mathrm{Cay}(\mathbb F_2^k, S)$ generated by a zero-sum set $S$ of size $\lceil d \rceil$. This upper bound is known to give the correct value of $f(d)$ for $2 < d \leq 6$ \cite{johnston2016rainbow, halfpap2022rainbow}, and Halfpap~\cite{halfpap2022rainbow} has conjectured it to be tight for all $d$, up to possibly an additive constant.

The natural analogue of this question for general directed graphs is somewhat less interesting, as directed bicliques can have very high average out-degree but no directed path of length $2$, let alone a rainbow one. To remedy this, one may impose additional assumptions on the digraph. One natural condition to consider is that the digraph is \emph{Eulerian}, meaning that every vertex has in-degree equal to its out-degree.

\begin{problem}What is the maximum length of a rainbow path in any properly edge-coloured Eulerian digraph of average out-degree $d$?
\end{problem}
In the uncoloured setting, a long-standing conjecture of Bollob\'as and Scott (Conjecture 7 in \cite{bollobas-scott}) implies that any Eulerian digraph of average out-degree $d$ contains a path of length $\Omega(d)$. The best known lower bound in this direction is of order $\Omega(d/\log{d})$ by Knierim, Larcher and Martinsson~\cite{KLM}. Our \Cref{lem:find robust expander} might provide some insight into questions such as this, as it gives a way to find robust out-expanders in dense Eulerian digraphs. However, it seems to say nothing without any minimum degree assumptions, so additional ideas are needed.

\textbf{Acknowledgments.} We would like to thank Noga Alon for leading us to Pollard's Theorem, which replaced a more involved earlier argument for establishing expansion properties in $\mathbb{Z}_p$. This paper was completed while Matija Buci\'c, Alp M\"uyesser and Liana Yepremyan were in
residence at the Simons Laufer Mathematical Sciences Institute in
Berkeley, California, during the Spring 2025 semester supported by the National Science
Foundation under Grant No. DMS-1928930. We would like to thank the organizers of the thematic semester and the staff. We would also like to thank the anonymous referees for their useful comments and suggestions, which greatly improved the exposition of the paper.

\providecommand{\MR}[1]{}
\providecommand{\MRhref}[2]{%
\href{http://www.ams.org/mathscinet-getitem?mr=#1}{#2}}

\bibliographystyle{amsplain_initials_nobysame}
\bibliography{bib}

\end{document}